\newtheorem{theorem}{Theorem}[section]
\newtheorem{lemma}[theorem]{Lemma}
\newtheorem{proposition}[theorem]{Proposition}
\newtheorem{conjecture}[theorem]{Conjecture}
\newtheorem*{proposition*}{Proposition}
\theoremstyle{definition}
\newtheorem{definition}[theorem]{Definition}
\newtheorem{example}[theorem]{Example}
\newtheorem{corollary}[theorem]{Corollary}
\theoremstyle{remark}
\newtheorem{remark}[theorem]{Remark}
\numberwithin{equation}{section}
\newcommand{\abs}[1]{\lvert#1\rvert}
\newcommand{\norm}[1]{\lVert#1\rVert}
\newcommand{\bow}[1]{\overset{{}_{\bowtie}}{#1}}
\newcommand{\A}{\mathbb{A}}
\newcommand{\C}{\mathbb{C}}
\newcommand{\CC}{C}
\newcommand{\E}{\mathcal{E}}
\newcommand{\EE}{\mathsf{E}}
\newcommand{\R}{\mathbb{R}}
\newcommand{\s}{\mathbb{S}}
\newcommand{\X}{\mathbb{X}}
\newcommand{\D}{\mathfrak{D}}
\newcommand{\Ho}{\mathsf{H}}
\newcommand{\cto}{\xrightarrow[]{{}_{c\delta}}}
\newcommand{\onto}{\overset{{}_{\textnormal{\tiny{onto}}}}{\longrightarrow}}
\DeclareMathOperator{\dist}{dist} \DeclareMathOperator{\Mod}{Mod}
\DeclareMathOperator{\re}{Re} \DeclareMathOperator{\im}{Im}
\DeclareMathOperator{\id}{id}
\def\XXint#1#2#3{{\setbox0=\hbox{$#1{#2#3}{\int}$}
\vcenter{\hbox{$#2#3$}}\kern-.5\wd0}}
\def\le{\leqslant}
\def\ge{\geqslant}
\begin{document}

\title[{Energy-minimal diffeomorphisms between Riemann surfaces}]{Energy-minimal diffeomorphisms between  doubly connected Riemann surfaces}

\author{David Kalaj}
\address{Faculty of Natural Sciences and
Mathematics, University of Montenegro, Cetinjski put b.b. 81000
Podgorica, Montenegro} \email{davidk@ac.me}






\subjclass[2000]{Primary 58E20; Secondary 30C62, 31A05}

\date{\today}

\keywords{Dirichlet energy, Riemann surfaces, minimal energy,
harmonic mapping, conformal modulus}

\begin{abstract} Let  $M$ and $N$ be doubly connected
 Riemann surfaces with boundaries and with nonvanishing conformal metrics $\sigma$ and $\rho$ respectively, and assume that $\rho$ is a smooth
metric with bounded Gauss curvature $\mathcal{K}$ and finite area.
The paper establishes the existence of homeomorphisms between $M$
and $N$ that minimize  the Dirichlet energy.
 \smallskip
{\it In the class of all homeomorphisms $f \colon M\onto N$ between
doubly connected Riemann surfaces such that $\Mod M \le \Mod N$
there exists, unique up to conformal authomorphisms of $M$, an
energy-minimal diffeomorphism which is a harmonic diffeomorphism.}
The results improve and extend some recent results of Iwaniec, Koh,
Kovalev and Onninen (Inven. Math. (2011)),
 where the authors
considered bounded doubly connected domains in the complex plane
w.r. to Euclidean metric.
\end{abstract}


\maketitle 
\section{Introduction}\label{intsec}
The primary goal of this paper is to study the minimum of energy
integral of mappings defined between annuli in Riemann surfaces. We
will study the existence of a diffeomorphism $f\colon
\Omega\onto\Omega^*$ of smallest (finite) $\rho-$Dirichlet energy
(see \eqref{hil}) where $\rho$ is an arbitrary smooth metric with
bounded Gauss curvature and finite area defined on $\Omega^*$.  We
extend in this way some results obtained in a recent essential paper
of Iwaniec, Koh, Kovalev and Onninen (\cite{inve}) where the authors
considered the same problem but for planar case. We will follow
closely the ideas and notation of the paper \cite{inve} but we need
to make a new approach, due to the different geometry. We begin by
the definition of harmonic mappings.

\subsection{Harmonic mappings between Riemann surfaces} Let $M$ and $N$ be
Riemann surfaces with metrics $\sigma$ and $\rho$, respectively. If
a mapping $f:M\to N,$ is $C^2$, then $f$ is said to be harmonic (to
avoid the confusion we will sometimes say $\rho$-harmonic) if

\begin{equation}\label{el}
f_{z\overline z}+{(\log \rho^2)}_w\circ f f_z\,f_{\bar z}=0,
\end{equation}
where $z$ and $w$ are the local parameters on $M$ and $N$
respectively. Also $f$ satisfies \eqref{el} if and only if its Hopf
differential
\begin{equation}\label{anal}
\mathrm{Hopf}(f)=\rho^2 \circ f f_z\overline{f_{\bar z}}
\end{equation} is a
holomorphic quadratic differential on $M$. Let $$|\partial
f|^2:=\frac{\rho^2(f(z))}{\sigma^2(z)}\left|\frac{\partial
f}{\partial z}\right|^2\text{ and }|\bar \partial
f|^2:=\frac{\rho^2(f(z))}{\sigma^2(z)}\left|\frac{\partial
f}{\partial \bar z}\right|^2$$ where $\frac{\partial f}{\partial z}$
and $\frac{\partial f}{\partial \bar z}$ are standard complex
partial derivatives. The $\rho-$Jacobian is defined by
$$J(f):=|\partial f|^2-|\partial \bar f|^2.$$ If $u$ is sense preserving,
then the $\rho-$Jacobian is positive. The Hilbert-Schmidt norm of
differential $df$ is the square root of the energy $e(f)$ and is
defined by
\begin{equation}\label{hil}|df|=\sqrt{2|\partial f|^2+2|\partial\bar
f|^2}.\end{equation}  For $g:M \mapsto N$ the $\rho-$
\emph{Dirichlet energy} is defined by
\begin{equation}\label{ener1} \E^\rho[g]=\int_{M} |dg|^2 dV_\sigma,
\end{equation}
where $\partial g$, and $\bar \partial g$ are the partial
derivatives taken with respect to the metrics $\varrho$ and
$\sigma$, and $dV_\sigma$, is the volume element on $(M,\sigma)$,
which in local coordinates takes the form $\sigma^2(w)du\wedge dv,$
$w=u+iv$. Assume that energy integral of $f$ is bounded. Then a
stationary point $f$ of the corresponding functional where the
homotopy class of $f$ is the range of this functional is a harmonic
mapping. The converse is not true. More precisely there exists a
harmonic mapping which is not stationary. The literature is not
always clear on this point because for many authors, a harmonic
mapping is a stationary point of the energy integral. For the last
definition and some important properties of harmonic maps see
\cite{SY}. It follows from the definition that, if $a$ is conformal
and $f$ is harmonic, then $f\circ a$ is harmonic. Moreover if $b$ is
conformal, $b\circ f$ is also harmonic but with respect to
(possibly) an another metric $\rho_1$ (see Lemma~\ref{pali} below).



 Moreover if $N$ and $M$ are double connected
Riemann surfaces with non vanishing metrics $\sigma$ and $\rho$,
then by \cite[Theorem~3.1]{jost}, there exist conformal mappings
$X\colon \Omega \onto M$ and $X^*\colon \Omega^*\onto N$ between
double connected plane domains $\Omega$ and $\Omega^*$ and Riemann
surfaces $M$ and $N$ respectively.

Notice that the harmonicity neither Dirichlet energy do not depend
on metric $\sigma$ on domain so we will assume from now on
$\sigma(z)\equiv 1$. This is why throughout this paper
$M=(\Omega,\mathbf{1})$ and $N=(\Omega^*,\rho)$ will be doubly
connected domains in the complex plane $\C$ (possibly unbounded),
where $\mathbf{1}$ is the Euclidean metric. Moreover $\rho$ is a
nonvanishing smooth metric defined in $\Omega^*$ with bounded Gauss
curvature $\mathcal{K}$ where
\begin{equation}\label{gaus}\mathcal{K}(z)=-\frac{\Delta \log \rho(z)}{\rho^2(z)},\end{equation} (we
put $\kappa:=\sup_{z\in \Omega^*} |\mathcal{K}(z)|<\infty$) and with
finite area defined by
$$\mathcal{A}(\rho)=\int_{\Omega^*}\rho^2(w) du dv, \ \ w=u+iv.$$ We call such a metric
\emph{allowable} one (cf. \cite[P.~11]{Ahb}). If $\rho$ is a given
metric in $\Omega^\ast$, we conventionally extend it to be equal to
$0$ in $\partial\Omega^*$.
As we already pointed out, we will study the minimum of Dirichlet
integral of mappings between certain sets. We refer to introduction
of \cite{inve} and references therein for good setting of this
problem and some connection with the theory of nonlinear elasticity.
Notice first that a change of variables $w=f(z)$ in~\eqref{ener1}
yields
\begin{equation}\label{ener2}
{\E^\rho}[f] = 2\int_{\Omega} \rho^2(f(z))J_f(z)\, dz +
4\int_{\Omega}\rho^2(f(z)) \abs{f_{\bar z}}^2dz\ge 2
\mathcal{A}(\rho)
\end{equation}
where $J_f$ is the Jacobian determinant and $\mathcal{A}(\rho)$ is
the area of $\Omega^*$ and $dz:=dx\wedge dy$ is the area element
w.r. to Lebesgue measure on the complex plane. A conformal mapping
of $f:\Omega\onto\Omega^*$; that is, a homeomorphic solution of the
Cauchy-Riemann system $ f_{\bar z}=0$, would be an obvious choice
for the minimizer of~\eqref{ener2}. For arbitrary multiply connected
domains there is no such mapping.
%
%

In the case of Euclidean
metric $\rho\equiv 1$, the existence of a harmonic diffeomorphism
between certain sets does not imply the existence of an
energy-minimal one, see \cite[Example~9.1]{inve}. Example~9.1 in
\cite{inve} has been constructed with help of affine self-mappings
of the complex plane. For a general metric $\rho$, affine
transformations are not harmonic, thus we cannot produce a similar
example.

First of all,  energy-minimal diffeomorphisms for bounded simply
connected domains exist by virtue of the Riemann mapping theorem. We
continue to study the doubly connected case which has been already
studied before for some special cases. The case of circular annuli
w.r. to Euclidean metric and the metric $\rho(w)=1/|w|$ is
established in \cite{AIM}. This result has been extended to all
radial metrics in \cite{dist}. Recently in \cite{inve}, by using
very interesting approach the authors established the case of
bounded doubly connected domains in the complex plane w.r. to
Euclidean metric, provided that the domain has smaller modulus than
the target \cite[Theorem~1.1]{inve}.
\subsection{Statement of results}
One of the main results of this paper is the following
generalization of the main result in \cite{inve}.
\begin{theorem} \label{mainexist}
Suppose that $\Omega$ and $\Omega^*$ are doubly connected domains in
$\C$ such that $\Mod \Omega\le \Mod\Omega^*$ and let $\rho$ be an
allowable metric in $\Omega^*$. Then there exists an
$\rho-$energy-minimal diffeomorphism $f\colon \Omega\onto \Omega^*$,
which is unique up to a conformal change of variables in $\Omega$.
\end{theorem}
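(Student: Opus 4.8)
The plan is to run the direct method of the calculus of variations in a suitably enlarged class of deformations, and then to recover the geometric (diffeomorphism) properties of the minimizer from its Euler--Lagrange equation together with the rigidity forced by the annular topology. First I would normalize: by the uniformization recalled just before the statement I may assume that $\Omega$ and $\Omega^*$ are round annuli, $\Omega=\{1<|z|<R\}$ and $\Omega^*=\{1<|w|<R^*\}$, so that the hypothesis reads $\log R=\Mod\Omega\le\Mod\Omega^*=\log R^*$, the metric being transported to the target annulus and remaining allowable. As the admissible class $\mathcal F$ I would take the $W^{1,2}$ mappings $f\colon\Omega\to\overline{\Omega^*}$ of finite $\rho$-energy that lie in the homotopy/degree class of an orientation-preserving homeomorphism sending the inner (resp.\ outer) boundary circle of $\Omega$ to the inner (resp.\ outer) boundary of $\Omega^*$. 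A conformal map is the obvious candidate minimizer of~\eqref{ener2} but cannot exist when $\Mod\Omega<\Mod\Omega^*$, which is exactly why the energy-minimal \emph{harmonic} map is the correct object to produce; the inequality $\Mod\Omega\le\Mod\Omega^*$ is what guarantees that $\mathcal F$ is nonempty and, more importantly, that the infimum is not captured by a collapsing map.

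Next I would carry out the compactness step. Along a minimizing sequence $f_n$ the bound $\E^\rho[f_n]\le C$ together with the lower estimate $\E^\rho[f_n]\ge 2\mathcal A(\rho)$ from~\eqref{ener2} yields uniform $W^{1,2}$ control, so after passing to a subsequence $f_n\rightharpoonup f$ weakly in $W^{1,2}$ and locally uniformly. Since $\E^\rho[\,\cdot\,]$ is convex in the differential it is weakly lower semicontinuous, whence $\E^\rho[f]\le\liminf_n\E^\rho[f_n]=\inf_{\mathcal F}\E^\rho$, so $f$ minimizes in the weak closure of $\mathcal F$. The subtle point here is that the weak limit of homeomorphisms need only be a monotone (cell-like) map: a priori it could fail to be injective, or could push part of $\Omega$ onto $\partial\Omega^*$.

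The heart of the argument is to upgrade this weak minimizer to a harmonic diffeomorphism. I would first derive the inner-variation (Euler--Lagrange) equations, so that on the open set where $f$ is a local homeomorphism it solves~\eqref{el} and hence its Hopf differential $\mathrm{Hopf}(f)=\rho^2\circ f\,f_z\overline{f_{\bar z}}$ from~\eqref{anal} is a holomorphic quadratic differential. On a round annulus every such differential has the rigid form $c\,dz^2/z^2$ for a single constant $c$, and the free-boundary (natural) condition satisfied by an energy minimizer forces $c$ to be real. This one-parameter rigidity is what replaces the affine comparison maps available in the Euclidean setting of~\cite{inve}, and it is precisely here that $\kappa=\sup|\mathcal K|<\infty$ enters: the curvature bound is needed both for interior regularity of solutions of the nonlinear equation~\eqref{el} and to control the metric distortion so that the Hopf analysis remains valid up to $\partial\Omega^*$. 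Reading off the sign of $c$ in combination with $\Mod\Omega\le\Mod\Omega^*$, I would rule out the collapse of either boundary circle, establish $J_f>0$ throughout, and then conclude by a degree/monotonicity argument that $f$ is a genuine homeomorphism, hence a harmonic diffeomorphism by elliptic regularity for~\eqref{el}.

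For uniqueness up to conformal automorphisms of $\Omega$ --- which for the normalized annulus are the rotations $z\mapsto e^{i\theta}z$ --- I would argue that an energy minimizer is pinned down by its Hopf differential: two minimizers share the same real constant $c$ and the same boundary correspondence up to rotation, and the comparison theory for harmonic maps with this data identifies them after a rotation of the domain. I expect the main obstacle to be the third step, namely controlling the possible degeneration of the weak minimizer and proving injectivity for a \emph{general} allowable $\rho$, since both the Gauss-curvature term and the nonlinearity in~\eqref{el} obstruct a direct transplantation of the Euclidean arguments of~\cite{inve} and require the curvature estimates in an essential way.
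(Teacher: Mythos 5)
Your proposal has the right skeleton (direct method in a weakly closed class of generalized maps, holomorphic Hopf differential of the rigid form $c\,dz^2/z^2$ on the annulus, sign of $c$ tied to the moduli), but the decisive step is missing. You write that you would ``rule out the collapse of either boundary circle'' by ``reading off the sign of $c$''; this is precisely the point where the sign of $c$ alone is not enough. The weak minimizer $h$ may a priori map a full collar of $\Omega$ onto $\partial\Omega^*$, so that the set $G=\{z:h(z)\in\Omega^*\}$ is a strictly smaller doubly connected domain with $\Mod G<\Mod\Omega$. To exclude this, the paper sets up a one-parameter family of variational problems, defines the minimum energy function $\tau\mapsto \EE^\rho(\tau,\omega)$, and proves (Proposition~\ref{q3}) that it is \emph{strictly} decreasing for $\tau<\omega$, with derivative $-8\pi c$; the strict monotonicity in turn rests on the Reich--Walczak-type inequalities (Propositions~\ref{rwrho} and~\ref{rwthe}), the power-stretch comparison of Lemma~\ref{majorant}, and Corollary~\ref{cpositive} giving the sign of $c$. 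Only then does the inequality $\int_G\rho^2\circ h\,|Dh|^2\ge \EE^\rho(\Mod G,\omega)>\EE^\rho(\tau,\omega)=\E^\rho[h]$ produce the contradiction forcing $G=\Omega$. Nothing in your outline supplies a substitute for this comparison across different moduli, and without it the injectivity/non-collapse claim does not follow.

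Two further points. First, harmonicity of the minimizer cannot be read off from an Euler--Lagrange equation for outer variations, because the constraint $h(\Omega)\subset\overline{\Omega^*}$ is not open; the paper instead uses harmonic replacement on allowable geodesic disks (Lemmas~\ref{RKC} and~\ref{harmrep}, relying on Hildebrandt--Kaul--Widman and Jost's univalency theorem --- this is where the curvature bound $\kappa<\infty$ actually enters, via the radius restriction $\nu<\pi/(2\kappa)$), followed by Heinz's extension of Lewy's theorem and monotonicity to get a global diffeomorphism. Second, your uniqueness argument assumes two minimizers ``share the same boundary correspondence up to rotation,'' which is not available (no boundary regularity is known); the paper instead proves uniqueness by the direct energy comparison of Proposition~\ref{needed}, showing that equality in the chain of estimates forces $(h^\circ)^{-1}\circ h$ to be conformal. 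These are not cosmetic omissions: each replaces an assertion in your outline that would otherwise fail or remain unjustified.
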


In particular case of the Euclidean metric, our results improve the
main result in \cite{inve} because in contrast to \cite{inve} in
this paper we relax from the assumption that $\Omega$ is a bounded
domain. For the Euclidean metric, the target $\Omega^*$ cannot be
arbitrary, because the energy of every diffeomorphism
$f\colon\Omega\mapsto\Omega^\ast$ is larger than the area
$|\Omega^\ast|$ of $\Omega^*$. In other word the Euclidean metric is
allowable for every domain $\Omega^*$ with finite area (not
necessarily bounded).

The notation $\Mod \Omega$ stands for the {\it conformal modulus} of
$\Omega$. Any doubly connected domain $\Omega\subset\C$, $\Omega\neq
\C\setminus\{a\}$ is conformally equivalent to some \emph{circular
annulus} $$A(\tau):=A(e^{-\tau},1)=\{z \colon e^{-\tau}< \abs{z}<1
\}$$ with $0<\tau \le \infty $. The number $\tau $ defines $\Mod
\Omega:=\tau $. By virtue of \cite[Theorem~3.1]{jost} the modulus
can be defined uniquely for double connected Riemann surfaces with
boundaries as well. The conformal modulus is infinite precisely when
one of components of $\C \setminus \Omega$ degenerates to a point.
We call such domain a {\it punctured domain}.
Theorem~\ref{mainexist} has the following corollary.
 \begin{corollary}
For any  doubly connected domain $\Omega$ and any punctured domain
$\Omega^\ast$ with allowable metric $\rho$ there exists an
energy-minimal diffeomorphism $f\colon \Omega\onto \Omega^*$, which
is unique up to a conformal change of variables in $\Omega$.
\end{corollary}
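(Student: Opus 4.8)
The plan is to derive the corollary as an immediate consequence of Theorem~\ref{mainexist}. The entire content reduces to checking that the modulus hypothesis $\Mod \Omega \le \Mod \Omega^*$ is automatically satisfied once $\Omega^*$ is a punctured domain, so that the theorem applies with no restriction whatsoever on $\Omega$.

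First I would invoke the characterization of the conformal modulus recalled in the introduction: for a doubly connected domain in $\C$, the modulus is infinite precisely when one of the two components of its complement degenerates to a single point. Since $\Omega^*$ is by assumption a punctured domain, one of the complementary components is a point, and hence $\Mod \Omega^* = \infty$.

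Next I would observe that every doubly connected domain $\Omega$ (excluding the exceptional $\C \setminus \{a\}$, which carries no finite annulus representation) has modulus $\Mod \Omega = \tau$ with $0 < \tau \le \infty$. In all cases this gives $\Mod \Omega \le \infty = \Mod \Omega^*$. Thus the inequality required by Theorem~\ref{mainexist} holds for an arbitrary $\Omega$, and since $\rho$ is allowable in $\Omega^*$ by hypothesis, all remaining assumptions of the theorem are in force. Applying Theorem~\ref{mainexist} then yields the desired $\rho$-energy-minimal diffeomorphism $f\colon \Omega \onto \Omega^*$, with uniqueness up to a conformal change of variables in $\Omega$ inherited directly from the theorem.

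Since the argument is a direct specialization, there is no real analytic obstacle. The only points that warrant a moment's care are that Theorem~\ref{mainexist} genuinely admits the boundary case $\Mod \Omega^* = \infty$ (it does, the hypothesis being the extended-real inequality $\Mod \Omega \le \Mod \Omega^*$), and that allowability of $\rho$ --- finite area together with bounded Gauss curvature --- is compatible with $\Omega^*$ being unbounded or noncompact near the puncture, which is exactly what the allowable-metric assumption is designed to guarantee.
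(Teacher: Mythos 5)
Your argument is correct and is exactly the paper's (implicit) reasoning: the paper presents this as an immediate consequence of Theorem~\ref{mainexist}, since a punctured domain has $\Mod\Omega^*=\infty$ by the very definition recalled just before the corollary, so the hypothesis $\Mod\Omega\le\Mod\Omega^*$ holds for every $\Omega$. Nothing further is needed.
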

The main subject of classical Teichm\"uller theory is the existence
of quasiconformal mappings $g\colon \Omega^\ast \onto \Omega$ with
smallest $L^\infty$-norm of the distortion function
\[K_g(w)= \frac{\abs{Dg(w)}^2}{2\, J_g(w)}, \qquad \mbox{ a.e. } w \in
\Omega^\ast\] see \cite{Ahb}. Let $d\mu(w)=\rho^2(w)dudv$ be the
measure at $\Omega^\ast$ and let $L^1=L^1(\Omega^*,\mu)$ be the
Lebesgue space of integrable functions defined in $\Omega^\ast$ (the
norm is defined by
$$\norm{\Phi}_{L^1}:=\int_{\Omega^\ast}\rho^2(w)|\Phi(w)|dudv,$$
where $w=u+iv$). Then the $L^1$-norm of $K_g$ and Dirichlet energy
of the inverse mapping are related via the transformation formula
\begin{equation}\label{identity}
\norm{K_g}_{L^1}= {\E^\rho}[f], \ \ \ \mbox{where}\, f=g^{-1} \colon
\Omega \onto \Omega^\ast.
\end{equation}  For rigorous
statements let us recall that a homeomorphism $g\colon \Omega^\ast
\onto \Omega$ of Sobolev class $W^{1,1}_{loc}(\Omega^\ast)$  has
integrable distortion if
\begin{equation}\label{finitedist}
\abs{Dg(w)}^2 \le 2 K(w)\, J_g(w) \qquad \mbox{ a.e. in }
\Omega^\ast
\end{equation}
for some $K\in L^1(\Omega^\ast,\mu)$. The smallest such $K_g \colon
\Omega^\ast \to [1, \infty)$, denoted by $K_g$, is referred to as
the {distortion function} of $g$.

It turns out that the inverse of any mapping with integrable
distortion has finite Dirichlet energy and the
identity~\eqref{identity} holds. As a consequence of
Theorem~\ref{mainexist} we obtain the following result.
\begin{theorem}\label{thmintdist}
Suppose that $\Omega$ and $\Omega^*$ are doubly connected domains in
$\C$ such that $\Mod \Omega\le \Mod\Omega^*$ and $\rho$ is an
allowable metric in $\Omega^*$. Then, among all homeomorphisms $g
\colon \Omega^\ast \onto \Omega$ there exists, unique up to a
conformal automorphism of $\Omega$, a  mapping of smallest
$L^1=L^1(\Omega^*,\mu)$ norm of the distortion. Here
$d\mu(w)=\rho^2(w)du\wedge dv$.
\end{theorem}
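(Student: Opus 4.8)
The plan is to deduce this statement directly from Theorem~\ref{mainexist} by exploiting the duality between the distortion functional and the Dirichlet energy encoded in the transformation formula~\eqref{identity}. The central observation is that the inversion $g\mapsto f=g^{-1}$ converts the problem of minimizing $\norm{K_g}_{L^1}$ over homeomorphisms $g\colon\Omega^\ast\onto\Omega$ (of Sobolev class $W^{1,1}_{loc}$ with integrable distortion) into the problem of minimizing $\E^\rho[f]$ over finite-energy homeomorphisms $f\colon\Omega\onto\Omega^\ast$, because along this correspondence $\norm{K_g}_{L^1}=\E^\rho[f]$. Thus the two variational problems are really one and the same, and the work is reduced to transferring the conclusions of Theorem~\ref{mainexist} back through the inversion.

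For existence, first I would let $f_0\colon\Omega\onto\Omega^\ast$ be the energy-minimal diffeomorphism supplied by Theorem~\ref{mainexist} (the hypothesis $\Mod\Omega\le\Mod\Omega^\ast$ is exactly the one required there) and set $g_0:=f_0^{-1}\colon\Omega^\ast\onto\Omega$. Since $f_0$ is a diffeomorphism, so is $g_0$; in particular $g_0\in W^{1,1}_{loc}(\Omega^\ast)$ has integrable distortion, so~\eqref{identity} applies and yields $\norm{K_{g_0}}_{L^1}=\E^\rho[f_0]$. Now for an arbitrary competitor $g\colon\Omega^\ast\onto\Omega$ with integrable distortion, its inverse $f:=g^{-1}\colon\Omega\onto\Omega^\ast$ is a homeomorphism of finite $\rho$-energy with $\norm{K_g}_{L^1}=\E^\rho[f]$, again by~\eqref{identity}. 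Since $f$ is admissible in the class minimized by $f_0$, we obtain
\begin{equation*}
\norm{K_{g_0}}_{L^1}=\E^\rho[f_0]\le \E^\rho[f]=\norm{K_g}_{L^1},
\end{equation*}
so $g_0$ is a minimizer of the $L^1$-norm of the distortion.

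For uniqueness, suppose $g_1\colon\Omega^\ast\onto\Omega$ is another minimizer. Then $\norm{K_{g_1}}_{L^1}$ is finite, hence $g_1$ has integrable distortion and $f_1:=g_1^{-1}$ satisfies $\E^\rho[f_1]=\norm{K_{g_1}}_{L^1}=\norm{K_{g_0}}_{L^1}=\E^\rho[f_0]$; thus $f_1$ is itself an energy-minimal diffeomorphism. By the uniqueness part of Theorem~\ref{mainexist}, $f_1=f_0\circ a$ for some conformal automorphism $a$ of $\Omega$, whence
\begin{equation*}
g_1=f_1^{-1}=a^{-1}\circ f_0^{-1}=a^{-1}\circ g_0.
\end{equation*}
As $a^{-1}$ is again a conformal automorphism of $\Omega$, the minimizers $g_0$ and $g_1$ differ exactly by such an automorphism, which is the asserted uniqueness.

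The substance of the argument is already contained in Theorem~\ref{mainexist}, so the only point genuinely requiring care is the admissibility step: one must verify that for every competitor $g$ with integrable distortion the inverse $f=g^{-1}$ is a \emph{finite-energy homeomorphism} $\Omega\onto\Omega^\ast$ belonging to the class over which $f_0$ is minimal, and that the two functionals match as in~\eqref{identity}. This rests on the fact (recorded just before the statement) that the inverse of any mapping with integrable distortion has finite Dirichlet energy, together with the change-of-variables computation underlying~\eqref{identity}. I expect this equivalence of the two admissible classes under inversion to be the main obstacle; once it is secured, existence and uniqueness transfer formally across the bijection $g\leftrightarrow g^{-1}$.
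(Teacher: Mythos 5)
Your proposal is correct and takes essentially the same route as the paper: both reduce the statement to Theorem~\ref{mainexist} (in its refined form Theorem~\ref{q4}) via the inversion $g\mapsto f=g^{-1}$ and the identity $\norm{K_g}_{L^1}={\E^\rho}[f]$, and both obtain uniqueness by transferring the conformal-automorphism ambiguity back through this bijection. The single step you defer to the pre-theorem remark --- that every competitor's inverse has finite energy and satisfies the identity, so that the comparison with $f_\circ$ is legitimate --- is exactly where the paper's proof does its work, exhausting $\Omega=A(r,1)$ by compact subannuli $\Omega_n=A(r+1/n,1-1/n)$, applying the Hencl--Koskela transformation formula on each $\Omega_n$, and invoking Corollary~\ref{iva} to compare with $f_\circ$ on these subdomains, so that remark should be treated as something to prove rather than to cite.
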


We continue this introduction with a brief outline of the proof of
Theorem~\ref{mainexist} i.e. of its refined variant
Theorem~\ref{q4}. The natural set for our minimization problem is
the subset $W^{1,2}_l$ of local Sobolev space
$W^{1,2}_{loc}(\Omega,\overline{\Omega^\ast})$ of maps $h$ whose
first derivative, i.e. the function $|dh|$ (see \eqref{hil}) lies in
$L^2(\Omega)$.  In this paper functions in the Sobolev spaces are
complex-valued. Let us reserve the notation
$\Ho^{1,2}(\Omega,\Omega^*)$ for the set of all sense-preserving
$W_l^{1,2}$-homeomorphisms $h\colon \Omega\onto\Omega^*$.  When this
set is nonempty, we define
\begin{equation}\label{en2}
\EE^\rho_\Ho(\Omega,\Omega^*)= \inf \{{\E^\rho}[h]\colon h\in
\Ho^{1,2}(\Omega,\Omega^*)\}.
\end{equation}
It is important to note that $W^{1,2}_l$ and the class
$\Ho^{1,2}(\Omega,\Omega^*)$ depend on the metric $\rho$.

A homeomorphism $h\in \Ho^{1,2}(\Omega,\Omega^*)$ is
\emph{energy-minimal} if it attains the infimum in~\eqref{en2}. Let
us notice that the set $\Ho^{1,2}(\Omega,\Omega^*)\subset W^{1,2}_l$
is not bounded neither bounded subsets of
$\Ho^{1,2}(\Omega,\Omega^*)$ are compact.

In order to minimize the energy integral, similarly as in
\cite{inve}, we introduce the class of so-called
\emph{deformations}. These are sense-preserving surjective mappings
of the Sobolev class $W^{1,2}_l$ that can be approximated by
homeomorphisms (diffeomorphisms) in a certain way which make itself
a compact family of mappings. The precise definition of the class of
deformations $\D^\rho(\Omega,\Omega^*)$ is given in \S\ref{stasec}.
Notice that $\Ho^{1,2}(\Omega,\Omega^*)\subset
\D^\rho(\Omega,\Omega^*)$. A deformation is not
necessarily injective neither bounded.  
Define
\begin{equation}\label{en1}
\EE^\rho(\Omega,\Omega^*)= \inf \{{\E^\rho}[h]\colon h\in
\D^\rho(\Omega,\Omega^*)\}
\end{equation}
where ${\E^\rho}[h]$ is as in~\eqref{ener1}. A deformation that
attains the infimum in~\eqref{en1} is called \emph{energy-minimal}.
It is obvious that $\EE^\rho_\Ho(\Omega,\Omega^*)\ge
\EE^\rho(\Omega,\Omega^*)$, but whether the equality holds is not
clear. The following theorem will imply Theorem\ref{mainexist}.
\begin{theorem}\label{q4}
Suppose that $\Omega$ and $\Omega^*$ are doubly connected domains in
$\C$ such that $\Mod \Omega\le \Mod\Omega^*$ and let $\rho$ be an
allowable metric in $\Omega^*$. Then there exists a diffeomorphism
$h\in \Ho^{1,2}(\Omega,\Omega^*)$ that minimizes the energy among
all deformations; that is,
$${\E^\rho}[h]=\EE^\rho(\Omega,\Omega^*)$$ and hence,
$$\EE^\rho_\Ho(\Omega,\Omega^*) = \EE^\rho(\Omega,\Omega^*).$$
%
\end{theorem}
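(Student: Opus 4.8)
The plan is to apply the direct method of the calculus of variations to obtain an energy-minimal deformation, and then to upgrade that minimizer to a diffeomorphism, the hypothesis $\Mod \Omega \le \Mod \Omega^*$ being precisely what is needed for the second step.

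First I would establish existence of an energy-minimal deformation. By \eqref{ener2} the energy is bounded below by $2\mathcal{A}(\rho)$, so the infimum $\EE^\rho(\Omega,\Omega^*)$ is finite, and I choose a minimizing sequence $h_n\in\D^\rho(\Omega,\Omega^*)$ with $\E^\rho[h_n]\to\EE^\rho(\Omega,\Omega^*)$. After normalizing the domain to a circular annulus $A(\tau)$, the integrand $\rho^2(w)\,\abs{\xi}^2$ is convex in the differential variable $\xi$ and continuous in $w$, so $\E^\rho$ is lower semicontinuous with respect to weak $W^{1,2}_{loc}$-convergence of the derivatives combined with local uniform convergence of the maps. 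Since the class $\D^\rho(\Omega,\Omega^*)$ is, by its very construction, compact under exactly this mode of convergence, a subsequence of $(h_n)$ converges to some $h\in\D^\rho(\Omega,\Omega^*)$, and lower semicontinuity yields $\E^\rho[h]\le\liminf_n\E^\rho[h_n]=\EE^\rho(\Omega,\Omega^*)$. Thus $h$ attains the infimum.

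Next I would extract the Euler--Lagrange structure. On the open set where $J_h>0$ the minimizer satisfies the harmonic map equation \eqref{el}, so its Hopf differential $\mathrm{Hopf}(h)=\rho^2(h)\,h_z\overline{h_{\bar z}}$ extends to a holomorphic quadratic differential on $A(\tau)$. Because the minimization is over all homeomorphisms with no prescribed boundary values, the minimizer obeys the natural (free) boundary conditions, which force $\mathrm{Hopf}(h)$ to be real along each boundary circle $\abs{z}=1$ and $\abs{z}=e^{-\tau}$. A holomorphic quadratic differential on $A(\tau)$ that is real on both circles must take the form $\mathrm{Hopf}(h)=c\,z^{-2}\,dz^2$ with a single \emph{real} constant $c$; in particular it has no zeros in the interior.

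The main obstacle is to show that $h$ is in fact a diffeomorphism onto $\Omega^*$, and this is where $\Mod \Omega \le \Mod \Omega^*$ is decisive. A deformation may a priori fail to be injective or may collapse part of the annulus to a lower-dimensional set, creating a zero of $J_h$. Interior nondegeneracy follows first: a harmonic map with nonnegative Jacobian whose Hopf differential is nonvanishing is a local diffeomorphism, so $J_h>0$ throughout $\inte A(\tau)$. The delicate point is the two boundary components, where I must exclude the squeezing phenomenon in which a boundary circle collapses or is folded onto an arc. I would argue by contradiction: were $h$ to degenerate at the boundary, a modulus comparison would show that the portion of $A(\tau)$ carried homeomorphically onto $\Omega^*$ has modulus strictly smaller than $\Mod\Omega^*$, forcing $\Mod\Omega>\Mod\Omega^*$ and contradicting the hypothesis; equivalently, the sign of the real constant $c$ is pinned down so that both boundary circles are traversed homeomorphically. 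With interior and boundary nondegeneracy in hand, global injectivity follows from the sense-preserving property together with the argument principle, so $h$ is a harmonic diffeomorphism. Finally, the identity $\EE^\rho_\Ho(\Omega,\Omega^*)=\EE^\rho(\Omega,\Omega^*)$ is then immediate: since $h\in\Ho^{1,2}(\Omega,\Omega^*)$ we get $\EE^\rho_\Ho\le\E^\rho[h]=\EE^\rho$, while the inclusion $\Ho^{1,2}(\Omega,\Omega^*)\subset\D^\rho(\Omega,\Omega^*)$ gives the reverse inequality.
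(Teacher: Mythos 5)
Your overall architecture (direct method to get an energy-minimal deformation, Hopf differential $c\,z^{-2}dz^2$ from inner variations, then an upgrade to a diffeomorphism using $\Mod\Omega\le\Mod\Omega^*$) is the paper's architecture, and the first two stages are essentially right, modulo two points you gloss over: the class $\D^\rho(\Omega,\Omega^*)$ is not ``compact by construction''--- it is sequentially weakly closed in $W^{1,2}$ (Lemma~\ref{exc}), and one must first normalize to circular annuli and use the energy bound to get weak precompactness of the minimizing sequence; and the assertion that the minimizer satisfies the harmonic equation where $J_h>0$ is not automatic for a \emph{constrained} minimizer --- the paper obtains it through the harmonic replacement argument (Lemmas~\ref{RKC} and~\ref{harmrep} feeding into Proposition~\ref{gtheory}), which produces an admissible competitor of strictly smaller energy if $h$ fails to be a local harmonic diffeomorphism on $G=\{z:h(z)\in\Omega^*\}$.

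The genuine gap is in the decisive step, excluding degeneracy at the boundary, i.e.\ showing $G=A(\tau)$. Your ``modulus comparison'' --- that degeneration would force the part of $A(\tau)$ mapped homeomorphically onto $\Omega^*$ to have modulus strictly smaller than $\Mod\Omega^*$, hence $\Mod\Omega>\Mod\Omega^*$ --- does not work as stated: $h|_G$ is merely a harmonic diffeomorphism onto $\Omega^*$, and harmonic diffeomorphisms do not preserve or control conformal modulus in the direction you need, so no contradiction with $\Mod\Omega\le\Mod\Omega^*$ arises this way. What the paper actually uses is an \emph{energy} comparison resting on the strict monotonicity of the minimum energy function (Proposition~\ref{q3}): one has $\Mod G\le\tau$ with equality iff $G=A(\tau)$, and if $\Mod G<\tau<\omega$ then
\[
\int_G \rho^2(h)\abs{Dh}^2 \;\ge\; \EE^\rho(\Mod G,\omega) \;>\; \EE^\rho(\tau,\omega) \;=\; \int_{A(\tau)}\rho^2(h)\abs{Dh}^2,
\]
which is absurd since $G\subset A(\tau)$. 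The strict inequality in the middle is the whole point, and its proof is a substantial piece of machinery: the normal/tangential energy splitting, the power-stretch competitors giving local Lipschitz continuity of $\tau\mapsto\EE^\rho(\tau,\omega)$, the derivative formula $\tfrac{d}{d\tau}\EE^\rho(\tau,\omega)=-8\pi c$, and the sign determination $c>0$ for $\tau<\omega$ via the Reich--Walczak-type inequalities (Propositions~\ref{rwrho} and~\ref{rwthe} through Corollary~\ref{cpositive}). Your proposal never establishes (or even invokes) this monotonicity, so the claim that the minimizing deformation is a homeomorphism of all of $\Omega$ remains unproved.
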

Behind our approach is the calculus of variation.  We rely on
certain inner variations, which yield, according to a trivial
modification of a result of Jost \cite{Job1},  that the Hopf
differential (\S\ref{hopsec}) of an energy-minimal deformation w.r.
to these variation is holomorphic in $\Omega$ and real on its
boundary. Our inner variations will prove efficient. Indeed, we will
show later that the energy-minimal deformation exists and is unique,
up to a conformal automorphisms of $\Omega$, under the condition
$\Mod(\Omega)\le \Mod(\Omega^*)$. By using the result of Jost and
improved Reich-Walczak-type inequalities (\S\ref{reisec}), as in
\cite{inve}, we obtain additional information about the Hopf
differential where the conformal moduli of $\Omega$ and $\Omega^*$
play their roles.

As in \cite{inve}, we consider a one-parameter family of variational
problems in which $\Omega$ changes continuously while $\Omega^\ast$
(and $\rho$) remain fixed. This was crucial idea in \cite{inve} and
works in this setting as well. We establish strict monotonicity of
the minimal energy as a function of the conformal modulus of
$\Omega$ (\S\ref{monsec}) (without additional assumption on the
boundary of target) improving in this way the corresponding result
in \cite{inve}. The proof of Theorem~\ref{q4} (and consequently of
Theorem~\ref{mainexist}), is completed in~\S\ref{exisec}. In
~\S\ref{consec} we establish the strict convexity of the minimal
energy provided that the modulus of domain is smaller than the
modulus of target. In the final section ~\S\ref{K} we consider the
special situation where $\Omega^\ast$ is a circular annulus with
radial metric.  We prove the strict convexity of minimal energy
under the so-called Nitsche condition. In addition we prove that
under the  Nitsche condition the minimal energy is attained for
quasiconformal harmonic mappings which are shown to be natural
generalization of conformal mappings.
\begin{remark} The existence of a harmonic diffeomorphisms  with prescribed boundary data
(which solve the Dirichlet problem)  between certain simply
connected domains in Riemann surfaces was primary purpose of a large
number of papers and books. We refer to the monographs of Jost
\cite{jost} and Hamilton \cite{hamr}. On the other hand, as is
observed by J. C. C. Nitsche \cite{n}, if the domains are doubly
connected (in particular circular annuli), then the existence of a
harmonic diffeomorphism between them is not assured even though we
do not take care of boundary data. In this paper, as a by-product of
main theorem (Theorem~\ref{q4}) it is established the existence of
$\rho-$harmonic diffeomoprhisms between doubly connected domains
$\Omega$ and $\Omega^*$
 in $\C$ provided that $\Mod \Omega\le
\Mod\Omega^*$ and $\rho$ is an allowable metric. Notice that, the
same approach can be deduced for arbitrary doubly connected domains
$\Omega$ and $\Omega^*$ in Riemann surfaces  $(M,\sigma)$ and
$(N,\rho)$. This is a variation of the corresponding result
\cite{IKO4} where the authors settled the same problem but for the
Euclidean metric ($\rho(w)\equiv 1$). Indeed they obtained a better
result in this special case by using the fact that the Euclidean
harmonicity is invariant under affine transformations of the target,
which is not the case for arbitrary $\rho$-harmonic mappings. 

In opposite direction, for every $\epsilon>0$ there exists a pair of
smooth bounded doubly connected domains $\Omega,\Omega^*$ with
$0<\Mod\Omega-\Psi_\rho(\Mod\Omega^*)<\epsilon,$ where
$$\Psi_p=\Psi_\rho(\omega):= \int_{R}^{1}
\frac{\rho(y)dy}{\sqrt{y^2\rho^2(y)-R^2\rho^2(R) }}$$ for which
there is no energy-minimal homeomorphism in
$\Ho^{1,2}(\Omega,\Omega^*)$ (See Section~\ref{K} and the reference
\cite{dist}). \end{remark}

\section{Deformations}\label{stasec}

A homeomorphism of a planar domain is either sense-preserving or
sense-reversing. For  homeomorphisms of the Sobolev class
$W^{1,1}_{\rm loc}(\Omega, \Omega^\ast)$ this implies that the
Jacobian determinant does not change sign: it is either nonnegative
or nonpositive at almost every point~\cite[Theorem 3.3.4]{AIMb}, see
also~\cite{HM}. The homeomorphisms considered in this paper are
sense-preserving.

Let $\Omega$ and $\Omega^*$ be domains in $\C$. Let $a\in \Omega^*$
and $b\in \partial \Omega^*$. We define
$$\dist_\rho(a,b):=\inf_\gamma \int_{\gamma}\rho(w)|dw|,$$ where $\gamma$ ranges over all
rectifiable Jordan arcs connecting $a$ and $b$ within $\Omega^*$ if
the set of such Jordan arcs is not empty (otherwise we
conventionally put $\dist_\rho(a,b)=\infty$). To every mapping $f
\colon \Omega \to \overline{\Omega^\ast}$ we associate a boundary
distance function
$$\delta^\rho_f(z)=\dist_\rho (f(z), \partial
\Omega^\ast)=\inf_{b\in \partial \Omega^*}\dist_\rho(f(z),b)$$ which
is set to $0$ on the boundary of $\Omega$.

We now adapt for our purpose the concepts of $c\delta-$uniform
convergence and of deformation defined for Euclidean metric and
bounded domains in \cite{inve}.
\begin{definition}
A sequence of continuous mappings $h_j\colon \Omega\to {\Omega^*}$
is said to converge \emph{$c\delta$-uniformly} to $h\colon \Omega\to
\overline{\Omega^*}$ if
\begin{enumerate}
\item $h_j\to h$ uniformly on compact subsets of $\Omega$ and
\item $\delta^\rho_{h_j} \to  \delta^\rho_h$ uniformly on $\overline{\Omega}$.
\end{enumerate}
We designate it as $h_j\cto h$. Concerning the item (1) we need to
notice that the Euclidean metric and the nonvanishing smooth metric
$\rho$ are equivalent on compacts of $\Omega^*$.
\end{definition}

\begin{definition}\label{defdef}  A mapping $h\colon \Omega\to\overline{\Omega^*}$
is called a \emph{$\rho$ deformation} (or just deformation) if
\begin{enumerate}
\item\label{dd1}   $h\in W_{loc}^{1,2}$ and $|dh|\in L^2$ (which we write shortly $h\in W_l^{1,2}$);
\item\label{dd2} The Jacobian $J_h:=\det Dh$ is nonnegative  a.e. in $\Omega$;
\item\label{dd2p} $\int_\Omega \rho^2(h(z))J_h  \le \mathcal{A}(\rho)$;
\item\label{dd3} there exist sense-preserving  diffeomorphisms  $h_j\colon \Omega\onto\Omega^*$, called an \emph{approximating sequence},  such that $h_j\cto h$  on $\Omega$.
\end{enumerate}
 The set of $\rho$ deformations
$h\colon \Omega \to \overline{\Omega^\ast}$ is denoted by
$\D^\rho(\Omega,\Omega^*)$.
\end{definition}
Notice first that the condition (1) of the previous definition in
the case of bounded domains $\Omega$ and $\Omega^*$ can be replaced
by  $h\in W^{1,2}$ see Lemma~\ref{pali}.

Notice also that $W^{1,2}_l$ depends on $\rho$ because the
Hilbert-Schmidt norm depends on $\rho$. In order to compare
Definition~\ref{defdef} with the corresponding definition
\cite[Definition~2.2]{inve} prove that, they coincide if $\rho$ is
the Euclidean metric, and $\Omega$ and $\Omega^*$ are doubly
connected bounded  domains. We need to show that, for a given
approximating sequence of homeomorphisms there exists an
approximating sequence of diffeomorphisms of $h_0$. For doubly
connected domains $\Omega$ and $\Omega^*$, there exist conformal
mappings $a:A\onto \Omega$ and $b: \Omega^*\onto A^*$ such that, $A$
and $A^*$ are circular annuli. Now for a given homeomorphisms
$h\colon \Omega\onto\Omega^*$, the mapping $\tilde h=b\circ h\circ
a$ is a homeomorphism between $A$ and $A^*$. We can map $A$ and
$A^*$ be means of diffeomorphisms $\chi$ and $\chi^*$ to $\mathbb S
^{2}\setminus\{N,S\}$, where $N$ and $S$ are north and south pole of
the unit sphere $\mathbb S^2\subset \mathbf R^3$. This induces a
unique homeomorphism $\bow{h} \colon \mathbb S^2 \onto \s^2$. By a
T. Rado's theorem, the homeomorphism $\bow{h}$ can be approximated
uniformly by a sequence of diffeomorphisms $\bow{h}_j$ leaving the
points $N$ and $S$ fixed. This induces a sequence of diffeomorphisms
$h_j\colon \Omega\onto\Omega^*$ converging $c\delta$-uniformly to
$h$. The diagonal selection will produce the desired approximating
sequence of $h_0$.

Further we have that $\Ho^{1,2}(\Omega,\Omega^*)\subset
\D^\rho(\Omega,\Omega^*)$. Outside of some degenerate cases, the set
of deformations is nonempty by Lemma~\ref{laterr} and is  closed
under weak limits in $W^{1,2}(\Omega)$ by Lemma~\ref{wclosed}.

\subsection{Some properties of deformations}\label{defsec} From now
on we assume that $\rho$ is an allowable metric in $\Omega^*$. We
begin with the following lemma which will simplify our approach
throughout the paper.
\begin{lemma}\label{pali}
Assume that $\Omega$ and $\Omega^*$ are doubly connected domains,
and assume that $a\colon \Omega \onto A(\tau)$ and $b\colon
A(\omega) \onto \Omega^*$ are univalent conformal mappings and
define $\rho_1(w)={\rho(b(w))}{|b'(w)|}$, $w\in A(\omega)$. Then
\begin{enumerate}[\ \ \ \ (a)]
\item $\E^{\rho}[b\circ
f\circ a]=\E^{\rho_1}[f]$ provided that one of the two sides exist.
    \item $b\circ f\circ a \in D^\rho(\Omega,\Omega^*)$ if and only if
$f\in D^{\rho_1}(A(\tau),A(\omega)))$.
    \item For Gauss curvature we have $\mathcal{K}_\rho(b(w))=\mathcal{K}_{\rho_1}(w)$.
\item  $\rho$ is an allowable metric if and only if
$\rho_1$ is an allowable metric.
    \item
$W_l^{1,2}(A(\tau),A(\omega))=W^{1,2}(A(\tau),A(\omega)).$
\item $b\circ f\circ a$ is $\rho$-harmonic if and only if $f$ is
$\rho_1-$harmonic.
\end{enumerate}

\end{lemma}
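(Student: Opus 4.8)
The plan is to treat all six items as consequences of a single chain-rule computation, since the real content of the lemma is that every ingredient of the problem is conformally natural. Writing $g=b\circ f\circ a$ and using that $a$ and $b$ are holomorphic (so $a_{\bar z}=0$ and $b_{\bar w}=0$), I would first record
$$g_z=b'(f\circ a)\,(f_z\circ a)\,a',\qquad g_{\bar z}=b'(f\circ a)\,(f_{\bar z}\circ a)\,\overline{a'},$$
together with the defining relation $\rho(b(w))=\rho_1(w)/|b'(w)|$. For (a) I substitute these into $\E^\rho[g]=2\int_\Omega \rho^2(g)(|g_z|^2+|g_{\bar z}|^2)\,dz$; the factors $|b'|^2$ from the derivatives cancel against the $1/|b'|^2$ in $\rho^2(b(w))=\rho_1^2(w)/|b'(w)|^2$, leaving $2\int_\Omega \rho_1^2(f\circ a)(|f_z\circ a|^2+|f_{\bar z}\circ a|^2)|a'|^2\,dz$. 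Changing variables $\zeta=a(z)$, whose Jacobian is exactly $|a'|^2$, turns this into $\E^{\rho_1}[f]$. The same substitution, with $J_g=|b'|^2|a'|^2\,(J_f\circ a)$ replacing the squared derivatives, shows $\int_\Omega\rho^2(g)J_g=\int_{A(\tau)}\rho_1^2(f)J_f$, which I will reuse below.

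For (c) I would write $\log\rho_1=(\log\rho)\circ b+\log|b'|$ and use that $\log|b'|=\re\log b'$ is harmonic, so $\Delta_w\log\rho_1=\Delta_w[(\log\rho)\circ b]$. The conformal covariance of the Laplacian, $\Delta_w[(\log\rho)\circ b]=|b'|^2\,(\Delta\log\rho)\circ b$, then gives
$$\mathcal{K}_{\rho_1}(w)=-\frac{\Delta_w\log\rho_1(w)}{\rho_1^2(w)}=-\frac{|b'(w)|^2(\Delta\log\rho)(b(w))}{\rho^2(b(w))|b'(w)|^2}=\mathcal{K}_\rho(b(w)).$$
Item (d) is then immediate: (c) shows $\sup|\mathcal{K}_{\rho_1}|=\sup|\mathcal{K}_\rho|$, the change of variables $\zeta=b(w)$ gives $\mathcal{A}(\rho_1)=\int_{A(\omega)}\rho^2(b)|b'|^2=\mathcal{A}(\rho)$, and smoothness and nonvanishing of $\rho_1$ are clear because $b$ is a conformal diffeomorphism with $b'\neq0$. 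For (f) I prefer the Hopf-differential characterization over the PDE~\eqref{el}: plugging the chain-rule formulas into $\mathrm{Hopf}(g)=\rho^2(g)\,g_z\overline{g_{\bar z}}$ and once more cancelling $|b'|^2$ yields $\mathrm{Hopf}(g)(z)=\mathrm{Hopf}(f)(a(z))\,(a'(z))^2$. This is exactly the transformation law of a quadratic differential under the biholomorphism $a$; since $(a')^2$ is holomorphic and nonvanishing, $\mathrm{Hopf}(g)$ is holomorphic in $z$ precisely when $\mathrm{Hopf}(f)$ is holomorphic in $\zeta$, which is (f).

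The remaining two items carry the technical weight. For (b) I would verify the four conditions of Definition~\ref{defdef} in turn: condition (1) follows from (a) and the chain rule (local $W^{1,2}$-regularity and $L^2$-finiteness of $|dh|$ are preserved), condition (2) from $J_g=|b'|^2|a'|^2(J_f\circ a)\ge0\iff J_f\ge0$, and condition (3) from the change-of-variables identity above together with $\mathcal{A}(\rho)=\mathcal{A}(\rho_1)$. The crux is condition (4), the $c\delta$-approximation. The key observation is that $b$ is an \emph{isometry} from $(A(\omega),\rho_1)$ onto $(\Omega^*,\rho)$: since $\rho_1(w)|dw|=\rho(b(w))|b'(w)||dw|$ is precisely the $\rho$-length element pulled back along $b$, rectifiable arcs go to rectifiable arcs of equal $\rho$-length, whence $\dist_\rho(b(w),\partial\Omega^*)=\dist_{\rho_1}(w,\partial A(\omega))$ and therefore $\delta^\rho_{b\circ f\circ a}=\delta^{\rho_1}_f\circ a$. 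Because $a\colon\Omega\onto A(\tau)$ is a bijection (with both boundary-distance functions vanishing on the boundary), $\sup_\Omega|\delta^\rho_{b\circ f_j\circ a}-\delta^\rho_{b\circ f\circ a}|=\sup_{A(\tau)}|\delta^{\rho_1}_{f_j}-\delta^{\rho_1}_f|$, so the boundary-distance clause of $c\delta$-convergence transfers verbatim, while uniform convergence on compacta transfers trivially under pre/post-composition with the fixed maps $a,b$. Thus $f_j\cto f$ yields $b\circ f_j\circ a\cto b\circ f\circ a$ with $b\circ f_j\circ a$ diffeomorphisms onto $\Omega^*$, and the converse direction uses $a^{-1},b^{-1}$. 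Finally (e): since $A(\omega)$ is bounded, every $f\colon A(\tau)\to A(\omega)$ lies in $L^\infty\subset L^2$ automatically, so only the gradient terms are at issue, and on the bounded annulus $\rho_1$ is comparable to the Euclidean metric, making $|df|\in L^2$ equivalent to the Euclidean $W^{1,2}$-condition on the derivatives. I expect condition (4) of (b) — keeping both clauses of $c\delta$-convergence intact under the isometric identification — to be the one point requiring genuine care.
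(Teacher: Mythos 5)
Your proposal is correct and follows essentially the same route as the paper: item (a) by the chain rule and the change of variables $\zeta=a(z)$, item (b) by transporting the four conditions of Definition~\ref{defdef} (including the approximating sequence $b\circ h_j\circ a$), item (c) from the curvature formula \eqref{gaus}, and item (f) via the Hopf differential. The only difference is that you supply details the paper leaves implicit — the explicit conformal-covariance computation for $\mathcal{K}$ and the observation that $b$ is an isometry $(A(\omega),\rho_1)\to(\Omega^*,\rho)$, which is indeed the right way to see that $\delta^{\rho}_{b\circ f\circ a}=\delta^{\rho_1}_{f}\circ a$ and hence that $c\delta$-convergence transfers.
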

\begin{proof} Let us show (a). By using the change of variables $w=a(z)$ we obtain
\[\begin{split}\E^{\rho_1}[f]&=\int_{A(\tau)}|df|^2=\int_{A(\tau)}\rho_1^2(f(w))|Df|^2\\&=\int_{A(\tau)}\rho^2(b(f(w)))|b'(f(w))|^2|Df(w)|^2
\\&=\int_{\Omega}\rho^2(f(a(z)))|b'(f(a(z)))|^2|Df(a(z))|^2a'(z)\\&=\int_\Omega|d(b\circ
f\circ a)|^2=\E^\rho[b\circ f\circ a].\end{split}\] Prove now (b).
First of all from (a) we have $|d(b\circ f\circ a)|\in L^2$ if and
only if $|d(f)|\in L^2$. Further $J_{b\circ f\circ
a}=|b'|^2J_f|a'|^2$ and therefore $J_{b\circ f\circ a}\ge 0$ if and
only if $J_f\ge 0$. The equality $\ \mathcal{A}(\rho)=
\mathcal{A}(\rho_1)$ follows by using the change of variables
$w=b(z)$ in the integral $\int_{\Omega^\ast}\rho^2(w)dw$.
Furthermore
$$\int_\Omega \rho^2(b\circ f\circ a(z))J_{b\circ f\circ a} = \int_{A(\tau)} \rho_1^2(f(z))J_f.$$
If $h_j$ is the corresponding approximating sequence of
diffeomorphisms for the deformation $f$ then the sequence $b\circ
h_j\circ a$ corresponds to the deformation $b\circ f \circ a$.  The
item (c) follows from formula for Gauss curvature \eqref{gaus}. The
equality $W_l^{1,2}(A(\tau),A(\omega))=W^{1,2}(A(\tau),A(\omega))$
follows from the fact that every mapping $f$ between $A(\tau)$ and
$A(\omega)$ is bounded, and consequently the inequality
$$\|f\|^2_{W^{1,2}}:=\int_{A(\tau)}(|f|^2+|df|^2)<\infty$$ is
automatically satisfied if $\int_{A(\tau)}|df|^2<\infty$. The item
(f) follows from the fact that Hopf differential of $f$ is
holomorphic if and only if Hopf differential of $b\circ f\circ a$ is
holomorphic.
\end{proof}
In \cite[Section~3]{inve} the authors established some essential
properties of the class of deformations
$\D(\Omega,\Omega^\ast):=\D^\rho(\Omega,\Omega^\ast)$ introduced in
Definition~\ref{defdef}, for $\rho\equiv 1$. Two main properties are
that the family $\D^\rho(\Omega,\Omega^\ast)$ is sequentially weakly
closed (Lemma~\ref{exc}) and its members satisfy a change of
variable formula~\eqref{useful}. It must be noticed that all
propositions of \cite[Section~3]{inve} except
\cite[Lemma~3.13]{inve} can be extend as well to the class
$\D^\rho(\Omega,\Omega^\ast)$, with some nonessential changes that
depends on the metric $\rho$; for instance instead of $\int_{\Omega}
J_h\le |\Omega^*|$ use the inequality $\int_{\Omega}
\rho^2(h(z))J_h\le \mathcal{A}(\rho)$ and instead of $\delta_f$
consider $\delta_f^\rho$. Notice also that the degree theory
\cite{Llb} and Lusin's condition for the class $W^{1,2}$ \cite{MaMa}
play important role in the proofs. We will only formulate the
corresponding properties needed in this paper.

\begin{lemma}\label{KM}\cite[Lemma~3.2]{inve}
Let $\Omega$, $\Omega^\ast$ and $\Omega^\circ$ be domains in $\C$.
If $f\colon \Omega^\circ \onto \Omega$ is a quasiconformal mapping
then for any $h\in \D^\rho(\Omega, \Omega^\ast)$ we have $h\circ f
\in \D^\rho(\Omega^\circ, \Omega^\ast)$.
\end{lemma}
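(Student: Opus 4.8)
The plan is to verify directly the four defining conditions of Definition~\ref{defdef} for the composed map $g:=h\circ f\colon\Omega^\circ\to\overline{\Omega^*}$. The analytic backbone is the chain rule $Dg=(Dh\circ f)\,Df$ together with the change-of-variables formula for the quasiconformal homeomorphism $f$; I will use repeatedly that $f$ satisfies Lusin's condition $(N)$ (and so does $f^{-1}$, which is again quasiconformal), that $J_f>0$ a.e., and the standard fact that composition with a quasiconformal map carries $W^{1,2}_{loc}$ maps to $W^{1,2}_{loc}$ maps. By Lemma~\ref{pali}(b) I may first reduce to the situation in which $\Omega^\circ$, $\Omega$ and $\Omega^*$ are circular annuli; this makes the closures compact and guarantees that $f$ extends to a homeomorphism $\overline{\Omega^\circ}\onto\overline\Omega$, a fact I will need for the boundary-distance estimate.

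Conditions \eqref{dd1}--\eqref{dd2p} form the routine, measure-theoretic part. For the energy I use the distortion inequality $\|Df\|^2\le K_f\,J_f$ a.e. (operator norm), which gives $|Dg|^2\le\|Df\|^2\,|Dh\circ f|^2\le K_f\,|Dh\circ f|^2\,J_f$; multiplying by $\rho^2(g)=\rho^2(h\circ f)$, integrating over $\Omega^\circ$ and changing variables $w=f(x)$ yields
\begin{equation*}
\E^\rho[g]\le K_f\int_{\Omega^\circ}\rho^2(h(f(x)))\,|Dh(f(x))|^2\,J_f(x)\,dx=K_f\,\E^\rho[h]<\infty,
\end{equation*}
so $g\in W^{1,2}_l$, which is \eqref{dd1}. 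Condition \eqref{dd2} follows from $J_g=(J_h\circ f)\,J_f$ a.e.: since $J_f>0$ a.e. and the preimage under $f$ of the null set $\{J_h<0\}$ is null (condition $(N)$ for $f^{-1}$), we obtain $J_g\ge0$ a.e. For \eqref{dd2p} the same change of variables gives $\int_{\Omega^\circ}\rho^2(g)\,J_g=\int_{\Omega}\rho^2(h)\,J_h\le\mathcal{A}(\rho)$.

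The substantive step is condition \eqref{dd3}: producing sense-preserving approximating diffeomorphisms for $g$. Since $h$ is a deformation there are diffeomorphisms $h_j\colon\Omega\onto\Omega^*$ with $h_j\cto h$, and, because $f$ extends to a homeomorphism $\overline{\Omega^\circ}\onto\overline\Omega$, the Rado approximation used in the construction following Definition~\ref{defdef} furnishes sense-preserving diffeomorphisms $f_k\colon\Omega^\circ\onto\Omega$ with $f_k\to f$ uniformly on $\overline{\Omega^\circ}$. Then $g_{j,k}:=h_j\circ f_k\colon\Omega^\circ\onto\Omega^*$ are sense-preserving diffeomorphisms, and I will extract a diagonal sequence $g_m=h_{j(m)}\circ f_{k(m)}$ with $g_m\cto g$. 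The uniform convergence on compacta is straightforward: on a compact $K\subset\Omega^\circ$ the sets $f_k(K)$ eventually lie in a fixed compact $K'\subset\Omega$, and one splits $|h_j\circ f_k-h\circ f|$ into $|(h_j-h)\circ f_k|$, controlled on $K'$ by $h_j\to h$, and $|h\circ f_k-h\circ f|$, controlled by the uniform continuity of $h$ on $K'$.

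I expect the delicate point, and the main obstacle, to be the uniform convergence of the boundary-distance functions on the \emph{closed} domain. Here I use the identities $\delta^\rho_{g_{j,k}}=\delta^\rho_{h_j}\circ f_k$ and $\delta^\rho_g=\delta^\rho_h\circ f$, valid because the boundary distance depends only on the image point, and then estimate
\begin{equation*}
\bigl|\delta^\rho_{h_j}\circ f_k-\delta^\rho_h\circ f\bigr|\le\bigl|(\delta^\rho_{h_j}-\delta^\rho_h)\circ f_k\bigr|+\bigl|\delta^\rho_h\circ f_k-\delta^\rho_h\circ f\bigr|.
\end{equation*}
The first term is bounded by $\sup_{\overline\Omega}|\delta^\rho_{h_j}-\delta^\rho_h|$, which tends to $0$ as $j\to\infty$ by $h_j\cto h$, uniformly in $x$ and $k$; the second tends to $0$ as $k\to\infty$ since $\delta^\rho_h$ is uniformly continuous on the compact set $\overline\Omega$ and $f_k\to f$ uniformly on $\overline{\Omega^\circ}$. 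A diagonal choice of indices then yields $\delta^\rho_{g_m}\to\delta^\rho_g$ uniformly on $\overline{\Omega^\circ}$, completing \eqref{dd3}. This boundary-distance control is exactly what forces the preliminary reduction to annuli (so that $\overline\Omega$ is compact and $f$ admits a homeomorphic boundary extension), and it is the place where the metric $\rho$ enters through $\delta^\rho$ in place of the Euclidean distance of \cite{inve}.
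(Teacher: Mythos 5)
Your verification is correct and is exactly the argument the paper intends here: the paper does not reprove this lemma but imports it verbatim from \cite[Lemma~3.2]{inve}, adding only the remark that possible unboundedness of the domains is harmless because $\E^\rho[h\circ f]\le K\,\E^\rho[h]$ with $K$ the maximal distortion of $f$ --- which is precisely your distortion-inequality/change-of-variables step for condition (1). The remaining conditions, including the identity $\delta^\rho_{h\circ f}=\delta^\rho_h\circ f$ and the diagonal composition of the two approximating sequences of diffeomorphisms, are handled in your proposal just as in the cited source (adapted to the weight $\rho^2$), so there is nothing substantive to object to.
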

Observe that in Lemma~\ref{KM} $\Omega$ and $\Omega^\ast$ need not
be bounded domains. This do not creates problems because $\E[h\circ
f]\le K\E[f]$, where $K$ is the quasiconformality constant of $h$.

\begin{lemma}\label{surjective}\cite[Lemma~3.4]{inve}
For any $h\in \D^\rho(\Omega, \Omega^\ast)$ we have  $h(\Omega)
\supset \Omega^\ast$.
\end{lemma}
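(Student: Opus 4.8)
The plan is to combine the two features of the defining approximating sequence: each approximant is a genuine surjection, and the boundary distance functions converge uniformly. Fix a target point $w_0\in\Omega^*$ and set $d_0:=\dist_\rho(w_0,\partial\Omega^*)$, which is strictly positive because $w_0$ is an interior point and $\rho$ is a nonvanishing metric. The whole task is to produce a single $z_*\in\Omega$ with $h(z_*)=w_0$.

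First I would reduce to the model case of circular annuli. By Lemma~\ref{pali}(b) we may write $h=b\circ f\circ a$ with $a\colon\Omega\onto A(\tau)$ and $b\colon A(\omega)\onto\Omega^*$ conformal and $f:=b^{-1}\circ h\circ a^{-1}\in\D^{\rho_1}(A(\tau),A(\omega))$; since $a$ and $b$ are bijections, $h(\Omega)\supset\Omega^*$ is equivalent to $f(A(\tau))\supset A(\omega)$. This reduction is exactly what buys the extra generality over \cite{inve}, since $\overline{A(\tau)}$ is compact even when $\Omega$ is unbounded. I therefore assume from now on that $\Omega$ and $\Omega^*$ are circular annuli, so that $\overline{\Omega}$ is compact.

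Next, take an approximating sequence of sense-preserving diffeomorphisms $h_j\colon\Omega\onto\Omega^*$ with $h_j\cto h$. Each $h_j$ is a bijection onto $\Omega^*$, so there is a unique $z_j\in\Omega$ with $h_j(z_j)=w_0$, whence $\delta^\rho_{h_j}(z_j)=\dist_\rho(w_0,\partial\Omega^*)=d_0$ for every $j$. Since $\delta^\rho_{h_j}\to\delta^\rho_h$ uniformly on $\overline{\Omega}$, we get $\delta^\rho_h(z_j)\to d_0$, so $\delta^\rho_h(z_j)\ge d_0/2$ for all large $j$. The function $\delta^\rho_h$ is continuous on $\overline{\Omega}$, being a uniform limit of the continuous functions $\delta^\rho_{h_j}$, and it vanishes on $\partial\Omega$ by convention; hence the superlevel set $F:=\{z\in\overline{\Omega}\colon \delta^\rho_h(z)\ge d_0/2\}$ is a compact subset of $\Omega$ disjoint from $\partial\Omega$. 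Therefore $z_j\in F$ for all large $j$, and after passing to a subsequence $z_j\to z_*\in F\subset\Omega$.

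It remains to identify the limit. Because $F$ is a compact subset of $\Omega$, the convergence $h_j\to h$ is uniform on $F$, and $h$ is continuous there as a locally uniform limit of continuous maps, so
\[
|h(z_*)-w_0|\le |h(z_*)-h(z_j)|+|h(z_j)-h_j(z_j)|\longrightarrow 0,
\]
using $h_j(z_j)=w_0$; thus $h(z_*)=w_0$ and $w_0\in h(\Omega)$. As $w_0\in\Omega^*$ was arbitrary, $h(\Omega)\supset\Omega^*$. I expect the only genuine obstacle to be the compactness claim of the third paragraph: the entire argument rests on the preimages $z_j$ not escaping toward $\partial\Omega$, and this is precisely what the uniform convergence of the boundary distances forbids, once the reduction to an annulus guarantees that $\{\delta^\rho_h\ge d_0/2\}$ is actually compact rather than merely closed.
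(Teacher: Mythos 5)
Your argument is correct and is essentially the proof of \cite[Lemma~3.4]{inve} that the paper relies on here: the paper does not reprove this lemma but cites it, remarking only that $\delta_f$ is to be replaced by $\delta^\rho_f$, and the cited proof is exactly your scheme (pull back $w_0$ under the approximating homeomorphisms, use the uniform convergence of the boundary-distance functions to trap the preimages $z_j$ in a compact subset of $\Omega$, and pass to the limit using locally uniform convergence). Your one addition --- conformally transplanting to circular annuli via Lemma~\ref{pali}(b) so that $\overline{\Omega}$ is compact --- is the right adjustment for the unbounded domains this paper admits, and is consistent with the paper's general remark that Lemma~\ref{pali} reduces all such statements to the annulus case.
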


\begin{definition}
A continuous mapping $f\colon \mathbb X \to \mathbb Y$ between
metric spaces $\mathbb X$ and $\mathbb Y$  is {\it monotone} if for
each $y\in f(\X)$ the set $f^{-1}(y)$ is compact and connected.
\end{definition}

\begin{proposition}\label{why}~\cite[VIII.2.2]{Wh}
If $\X$ is compact and $f\colon \mathbb X \onto \mathbb Y$ is
monotone then $f^{-1}(C)$ is connected for every connected set $C
\subset \mathbb Y$.
\end{proposition}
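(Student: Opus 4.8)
The plan is to argue by contradiction, transporting a hypothetical separation of $f^{-1}(C)$ down to a separation of $C$ by means of $f$, while using monotonicity to keep each fiber on a single side. First I would record two structural facts. Since $\X$ is compact and the target is a metric (hence Hausdorff) space, $f$ is a closed map: a closed subset of $\X$ is compact, its continuous image is compact, and compact subsets of a Hausdorff space are closed. Second, monotonicity gives that every fiber $f^{-1}(y)$ with $y\in C$ is a connected subset of $f^{-1}(C)$.

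Now suppose $f^{-1}(C)$ were disconnected, so that $f^{-1}(C)=P\cup Q$ with $P,Q$ nonempty, disjoint and mutually separated (neither meets the closure of the other). Because each fiber $f^{-1}(y)$ with $y\in C$ is connected and is contained in $P\cup Q$, it must lie entirely in $P$ or entirely in $Q$. This lets me split the base: set $C_P=\{y\in C: f^{-1}(y)\subset P\}$ and $C_Q=\{y\in C: f^{-1}(y)\subset Q\}$. One checks directly that $C=C_P\cup C_Q$ is a disjoint union, that both pieces are nonempty (since $P,Q$ are), and that $f(P)=C_P$, $f(Q)=C_Q$ with $P=f^{-1}(C_P)$ and $Q=f^{-1}(C_Q)$.

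The decisive step is to show that $C_P$ and $C_Q$ are separated in $\Y$, which contradicts the connectedness of $C$. Here the closedness of $f$ enters: $f(\overline P)$ is closed and contains $C_P$, so $\overline{C_P}\subset f(\overline P)$. If some $y$ lay in $\overline{C_P}\cap C_Q$, I could pick $x\in\overline P$ with $f(x)=y$; but $y\in C_Q$ forces $f^{-1}(y)\subset Q$, hence $x\in\overline P\cap Q=\emptyset$, a contradiction. Thus $\overline{C_P}\cap C_Q=\emptyset$, and symmetrically $C_P\cap\overline{C_Q}=\emptyset$, which is exactly the separation that cannot exist.

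I expect the main obstacle to be the bookkeeping at the interface of the two pictures, fibers versus the sets $P,Q$: one must confirm that a connected fiber cannot straddle the separation, and that pushing $\overline P$ forward controls $\overline{C_P}$. Both hinge on the two structural facts above, connectedness of fibers and closedness of $f$, and it is precisely this interplay—monotonicity on the source and Hausdorffness of the target—that makes the separation descend to $C$.
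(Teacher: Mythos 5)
Your proof is correct, and it is the standard argument: the paper itself offers no proof of this proposition, merely citing Whyburn's \emph{Analytic Topology} [VIII.2.2], where essentially this separation argument (connected fibers cannot straddle a separation of $f^{-1}(C)$, and closedness of $f$ on the compact space $\X$ pushes the separation down to $C$) is used. Nothing is missing; the two structural facts you isolate are exactly what the classical proof rests on.
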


\begin{lemma}\label{mono}\cite[Lemma~3.7]{inve}
Let $\Omega$ and $\Omega^\ast$ be doubly connected domains in $\C$,
and  $h\in \D^\rho(\Omega, \Omega^\ast)$.  Then $\bow{h}$ is
monotone.
\end{lemma}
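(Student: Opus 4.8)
The plan is to work entirely on the sphere. Recall the compactification described in \S\ref{stasec}: after composing with the conformal maps and the diffeomorphisms $\chi,\chi^\ast$ onto $\s^2\setminus\{N,S\}$, the approximating diffeomorphisms $h_j$ of Definition~\ref{defdef} induce homeomorphisms $\bow{h}_j\colon\s^2\onto\s^2$ that fix the poles $N,S$, while $h$ induces a map $\bow{h}\colon\s^2\to\s^2$. First I would verify that $h_j\cto h$ upgrades to \emph{uniform} convergence $\bow{h}_j\to\bow{h}$ on all of $\s^2$: on compact subsets of $\Omega$ this is the local uniform convergence built into $c\delta$-convergence, and near the two boundary components, which correspond to $N$ and $S$, it follows from the uniform convergence of the boundary distance functions $\delta^\rho_{h_j}\to\delta^\rho_h$, since a small value of $\delta^\rho$ forces the image to lie near $\partial\Omega^\ast$, hence near a pole. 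As a uniform limit of continuous maps on the compact space $\s^2$, the map $\bow{h}$ is then continuous, so every fibre $\bow{h}^{-1}(w)$ is closed and therefore compact. It remains to prove that the fibres are connected.

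The key is the elementary fact that a homeomorphism of $\s^2$ preserves the relation ``$F$ separates $p$ from $q$''. I would argue by contradiction. Suppose a fibre $K=\bow{h}^{-1}(w_0)$ with $w_0\in\bow{h}(\s^2)$ is disconnected; write $K=K_1\sqcup K_2$ with $K_1,K_2$ nonempty, disjoint and compact, and fix $a\in K_1$, $b\in K_2$. Set $2r=\dist(K_1,K_2)>0$, let $U$ be the open $r$-neighbourhood of $K_1$, and put $F=\partial U$. Then $F$ is compact and disjoint from $K$, because $K_1\subset U$ while $K_2$ lies at distance at least $2r$ from $K_1$, so that $\overline U\cap K_2=\emptyset$; moreover $F$ separates $a$ from $b$, since $U$ is relatively clopen in $\s^2\setminus F$ and contains $a$ but not $b$. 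In particular $w_0\notin\bow{h}(F)$, and as $F$ is compact we obtain $\varepsilon:=\dist\big(w_0,\bow{h}(F)\big)>0$.

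Then I would pass to the approximants. For $j$ large enough that $\sup_{\s^2}d(\bow{h}_j,\bow{h})<\varepsilon/3$, the set $\bow{h}_j(F)$ is disjoint from the spherical ball $B(w_0,2\varepsilon/3)$, while $\bow{h}_j(a),\bow{h}_j(b)\in B(w_0,\varepsilon/3)$ because $\bow{h}_j(a)\to\bow{h}(a)=w_0$ and similarly for $b$. Since $\bow{h}_j$ is a homeomorphism and $F$ separates $a$ from $b$, the image $\bow{h}_j(F)$ must separate $\bow{h}_j(a)$ from $\bow{h}_j(b)$. But both points lie in the connected set $B(w_0,\varepsilon/3)$, which misses $\bow{h}_j(F)$, so they lie in one and the same component of $\s^2\setminus\bow{h}_j(F)$ — a contradiction. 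Hence every fibre is connected, and with the compactness already noted this shows $\bow{h}$ is monotone.

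The one step I expect to require real care is the very first: promoting $c\delta$-convergence to genuine uniform convergence of the spherical extensions up to the poles. This is precisely where the boundary distance function $\delta^\rho$ earns its keep, since it controls the images of points approaching $\partial\Omega$ and is the reason $c\delta$-convergence, rather than mere local uniform convergence, was incorporated into the notion of a deformation. The subsequent separation argument is purely topological and uses nothing about $\rho$ beyond continuity of $\bow{h}$.
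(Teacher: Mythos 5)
Your argument is correct and is essentially the proof the paper relies on (the paper defers to \cite[Lemma~3.7]{inve}): one upgrades $c\delta$-convergence to uniform convergence of the spherical extensions $\bow{h}_j\to\bow{h}$ on $\s^2$, and then invokes the classical separation argument showing that a uniform limit of homeomorphisms of a compact surface has compact connected fibres. The only point to tighten is the one you yourself flag: smallness of $\delta^\rho$ only places the image near \emph{some} pole, so to get genuine uniform closeness of $\bow{h}_j$ and $\bow{h}$ near the ends you must also pin down that both land near the \emph{same} pole, which follows because each $\bow{h}_j$ fixes $N$ and $S$ and therefore maps the connected collar of each end of $\Omega$ into the single component of a small neighbourhood of $\{N,S\}$ containing the corresponding pole.
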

\begin{lemma}\label{multilemma}\cite[Lemma~3.8]{inve}
Let $\Omega$ and $\Omega^\ast$ be domains in $\C$. If $h\in
\D^\rho(\Omega, \Omega^\ast)$, then $h$ satisfies  Lusin's condition
$(N)$ and  $N_\Omega(h, w) =1$ for almost every $w\in \Omega^\ast$.
Also $J_h=0$ almost everywhere in $\Omega \setminus
h^{-1}(\Omega^\ast)$.
\end{lemma}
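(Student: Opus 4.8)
The plan is to prove the three assertions in turn, extracting as much as possible from the change-of-variables formula for deformations and reserving the statement about $h^{-1}(\partial\Omega^\ast)$ for a separate, more delicate argument. A deformation $h$ is continuous, being a $c\delta$-uniform limit of diffeomorphisms, and lies in $W^{1,2}_{loc}(\Omega)$ with $|dh|\in L^2$; since the ambient dimension is $n=2$, this places $h$ in the borderline Sobolev class $W^{1,n}_{loc}$. For continuous maps of this class, Lusin's condition $(N)$ is classical, so I would simply invoke \cite{MaMa}. Once $(N)$ is in force, $h$ is approximately differentiable a.e.\ and the change-of-variables formula~\eqref{useful} applies: for every Borel $\eta\ge 0$,
\[
\int_\Omega \eta(h(z))\,J_h(z)\,dz=\int_{\R^2}\eta(w)\,N_\Omega(h,w)\,dw,
\]
where I have used $J_h\ge 0$. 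Moreover, since $\int_{\{J_h=0\}}J_h=0$, the same formula shows that $h$ carries the critical set $\{J_h=0\}$ to a Lebesgue-null set; hence for a.e.\ $w$ every point of $h^{-1}(w)$ carries positive Jacobian.

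Next comes $N_\Omega(h,\cdot)=1$. Here I would take $\eta=\rho^2$, recalling the convention $\rho\equiv 0$ on $\partial\Omega^\ast$, so that $\rho^2$ is supported on $\overline{\Omega^\ast}$ and the identity reads $\int_\Omega\rho^2(h)\,J_h=\int_{\Omega^\ast}\rho^2(w)\,N_\Omega(h,w)\,dw$. Definition~\ref{defdef}(\ref{dd2p}) bounds the left-hand side by $\mathcal A(\rho)=\int_{\Omega^\ast}\rho^2$, while surjectivity (Lemma~\ref{surjective}) gives $N_\Omega(h,w)\ge 1$ for every $w\in\Omega^\ast$. Since $\rho^2>0$ on $\Omega^\ast$, the two facts combine into
\[
0\ \ge\ \int_{\Omega^\ast}\rho^2(w)\bigl(N_\Omega(h,w)-1\bigr)\,dw\ \ge\ 0,
\]
forcing $N_\Omega(h,w)=1$ for a.e.\ $w\in\Omega^\ast$.

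Finally, for $J_h=0$ a.e.\ on $\Omega\setminus h^{-1}(\Omega^\ast)$, I first note that this set equals $h^{-1}(\partial\Omega^\ast)$, where $\rho^2(h)$ vanishes identically; consequently the $\rho$-weighted identity above is completely blind to it, and a genuinely different, pointwise argument is needed. I would argue that a point $z$ of approximate differentiability with $J_h(z)>0$ cannot be mapped to $\partial\Omega^\ast$. At such a point the (sense-preserving) approximate differential is invertible, so by condition $(N)$ and the area formula $h$ behaves locally like a homeomorphism there; combining this with the fact that $\bow h$ is monotone of degree one (Lemma~\ref{mono}) and the degree theory of \cite{Llb}, the local degree of $h$ at $z$ is $+1$, so $h(z)$ is an interior point of $h(\Omega)\subset\overline{\Omega^\ast}$, hence a point of $\Omega^\ast$. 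Contrapositively, $h(z)\in\partial\Omega^\ast$ forces $J_h(z)=0$ at a.e.\ such $z$, which is the desired conclusion.

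This last step is the main obstacle. The degeneration of $\rho$ on $\partial\Omega^\ast$ means that neither the energy nor the $\rho$-weighted Jacobian can be used to control $J_h$ on $h^{-1}(\partial\Omega^\ast)$, and indeed the Euclidean Jacobian need not even be globally integrable there, so no global bound is available; the conclusion must instead be wrung out of the local structure of $h$ at regular points, via the interplay of approximate differentiability, condition $(N)$, and the degree. This is precisely the place where the geometry differs from the Euclidean setting of \cite{inve} and where the allowability of $\rho$ (smoothness and nonvanishing on $\Omega^\ast$) is used.
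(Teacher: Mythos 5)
The paper does not actually reprove this lemma: it imports the proof of \cite[Lemma~3.8]{inve} wholesale, with the single substitution of the weighted bound $\int_\Omega\rho^2(h(z))J_h\le\mathcal{A}(\rho)$ for $\int_\Omega J_h\le|\Omega^\ast|$ (as announced just before Lemma~\ref{KM}). Your middle step --- weighting the change-of-variables identity by $\rho^2$, invoking Lemma~\ref{surjective} for $N_\Omega(h,w)\ge 1$ and condition~(\ref{dd2p}) of Definition~\ref{defdef} for the upper bound --- is exactly that adaptation and is correct. The two outer steps each contain a genuine gap. First, Lusin's condition $(N)$ is \emph{not} ``classical'' for continuous maps of the borderline class $W^{1,n}_{loc}$: the very reference you cite, \cite{MaMa}, contains a continuous $W^{1,n}$ map violating $(N)$. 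What makes \cite{MaMa} applicable here is the monotonicity of $\bow{h}$ supplied by Lemma~\ref{mono} (monotone maps are pseudomonotone, and continuous pseudomonotone $W^{1,n}$ maps satisfy $(N)$). You never use Lemma~\ref{mono} for this purpose, so as written your first step rests on a false general claim.

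Second, your argument for $J_h=0$ a.e.\ on $h^{-1}(\partial\Omega^\ast)$ does not go through. Approximate differentiability at $z$ with $J_h(z)>0$ gives neither local openness nor a well-defined local degree at $z$: a planar $W^{1,2}$ map is in general only \emph{approximately} differentiable a.e., so $h$ may return to the value $h(z)$ on every small circle around $z$ and the computation $\deg=+1$ has no basis. Even granting classical differentiability, a positive degree would only place $h(z)$ in $\inte\overline{\Omega^\ast}$, which can strictly contain $\Omega^\ast$ (for instance when a boundary component is a slit), so $h(z)\in\Omega^\ast$ still would not follow. The argument in \cite{inve} is instead a global mass balance: $\int_\Omega J_h\le|\Omega^\ast|$ together with $\int_{h^{-1}(\Omega^\ast)}J_h=\int_{\Omega^\ast}N_\Omega(h,w)\,dw\ge|\Omega^\ast|$ forces $\int_{h^{-1}(\partial\Omega^\ast)}J_h=0$, whence $J_h=0$ a.e.\ there since $J_h\ge0$. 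You are right that the weighted hypothesis~(\ref{dd2p}) is blind to $h^{-1}(\partial\Omega^\ast)$ because $\rho$ is extended by $0$ on $\partial\Omega^\ast$ --- identifying this subtlety in transplanting \cite[Lemma~3.8]{inve} to the present setting is the most valuable part of your proposal --- but the pointwise degree argument you offer in its place is not a proof. A correct repair must use some other input, e.g.\ the unweighted area formula $\int_{h^{-1}(\partial\Omega^\ast)}J_h=\int_{\partial\Omega^\ast}N\bigl(h|_{h^{-1}(\partial\Omega^\ast)},w\bigr)\,dw$, which settles the matter whenever $|\partial\Omega^\ast|=0$; as it stands your final paragraph states the difficulty rather than resolving it.
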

\begin{corollary}\label{useless}\cite[Corollary~3.9]{inve}
Let $\Omega$ and $\Omega^\ast$ be domains in $\C$. If $h\in
\D^\rho(\Omega, \Omega^\ast)$ and  $v \colon  \overline{\Omega^\ast}
\to [0, \infty)$ is measurable, then
\begin{equation} \label{useful}
\int_\Omega v\big(h(z)\big) J_h(z)\, d z = \int_{\Omega^\ast} v(w)\,
d w.
\end{equation}
\end{corollary}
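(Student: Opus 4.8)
The plan is to derive \eqref{useful} from the classical area (change-of-variables) formula for Sobolev mappings satisfying Lusin's condition $(N)$, combined with the multiplicity and Jacobian information already collected in Lemma~\ref{multilemma}. Since $h\in\D^\rho(\Omega,\Omega^\ast)$ lies in $W^{1,2}_{loc}(\Omega)$, is continuous, and—by Lemma~\ref{multilemma}—satisfies condition $(N)$, the area formula from \cite{MaMa} applies: for every nonnegative measurable $u\colon\C\to[0,\infty)$,
$$\int_\Omega u(h(z))\,J_h(z)\,dz=\int_{\C}u(w)\,N_\Omega(h,w)\,dw,$$
where $N_\Omega(h,w)=\#\{z\in\Omega\colon h(z)=w\}$ is the multiplicity function, and where we have used that $J_h\ge 0$ a.e.\ for deformations so that $|J_h|=J_h$. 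Because $|dh|\in L^2(\Omega)$ forces $J_h\in L^1(\Omega)$, all integrals are meaningful; if $h$ is only locally $W^{1,2}$ in the Euclidean sense, one first exhausts $\Omega$ by relatively compact subdomains and passes to the limit by monotone convergence (legitimate as the integrands are nonnegative).

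Next I would extend the given $v$ to all of $\C$ by $0$ and apply the displayed identity to $u:=v\cdot\chi_{\Omega^\ast}$. Since $h$ maps $\Omega$ into $\overline{\Omega^\ast}$, the only points where $h(z)\notin\Omega^\ast$ are those with $h(z)\in\partial\Omega^\ast$, that is, the set $\Omega\setminus h^{-1}(\Omega^\ast)$; on this set $J_h=0$ a.e.\ by Lemma~\ref{multilemma}. Hence on the domain side
$$\int_\Omega u(h(z))\,J_h(z)\,dz=\int_{h^{-1}(\Omega^\ast)}v(h(z))\,J_h(z)\,dz=\int_\Omega v(h(z))\,J_h(z)\,dz,$$
the final equality holding because the integrand vanishes a.e.\ on the complementary set. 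On the target side, $N_\Omega(h,w)=1$ for almost every $w\in\Omega^\ast$, again by Lemma~\ref{multilemma}, so
$$\int_{\C}u(w)\,N_\Omega(h,w)\,dw=\int_{\Omega^\ast}v(w)\,N_\Omega(h,w)\,dw=\int_{\Omega^\ast}v(w)\,dw.$$
Combining the two displays gives exactly \eqref{useful}.

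The routine verifications—measurability of $w\mapsto N_\Omega(h,w)$, the exhaustion argument, and the reduction $|J_h|=J_h$—are standard, so the only genuine analytic input is the area formula for $W^{1,2}$ maps satisfying condition $(N)$. The main obstacle is therefore purely a matter of having the almost-everywhere statements $N_\Omega(h,\cdot)=1$ on $\Omega^\ast$ and $J_h=0$ off $h^{-1}(\Omega^\ast)$ available; but these are precisely the content of Lemma~\ref{multilemma}, so no difficulty beyond quoting them correctly arises. I expect the proof to be short, essentially the verification that the metric $\rho$ plays no role here (the identity is stated for Lebesgue measure on the target), so that the Euclidean argument of \cite[Corollary~3.9]{inve} transfers verbatim once the deformation class $\D^\rho$ is substituted for $\D$.
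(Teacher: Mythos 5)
Your argument is correct and is essentially the paper's (and \cite{inve}'s) own route: the identity is exactly the area formula for a continuous $W^{1,2}_{loc}$ map with condition $(N)$ and $J_h\ge 0$, combined with the two facts $N_\Omega(h,\cdot)=1$ a.e.\ on $\Omega^\ast$ and $J_h=0$ a.e.\ off $h^{-1}(\Omega^\ast)$ from Lemma~\ref{multilemma}. One cosmetic remark: since $v\ge 0$ the identity is one in $[0,\infty]$ and needs no integrability of $J_h$, which is just as well because $|dh|\in L^2$ is a $\rho$-weighted condition and does not by itself give $J_h\in L^1(\Omega)$ when $\rho$ degenerates near $\partial\Omega^\ast$.
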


In general, a deformation may take  a part of $\Omega$ into
$\partial \Omega^\ast$. This is the subject of next lemma.

\begin{lemma}\label{goodset}\cite[Lemma~3.10]{inve}
Suppose that  $h\in \D^\rho(\Omega,\Omega^*)$  where $\Omega$ and
$\Omega^\ast$ are doubly connected domains. Let $G= \{z\in
\Omega \colon h(z) \in \Omega^\ast\}$. Then   $G$ is a domain
separating the boundary components of $\Omega$. Precisely, the two
components of $\partial \Omega$ lie in different components of $\C
\setminus G$.
\end{lemma}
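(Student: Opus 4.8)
The plan is to reduce to circular annuli, pass to the two–sphere, and read off the topology of $G$ from the monotonicity of the spherical extension $\bow h$. First, $G=\{z\in\Omega\colon h(z)\in\Omega^\ast\}=h^{-1}(\Omega^\ast)$ is open, since $h$ is continuous and $\Omega^\ast$ is open. By Lemma~\ref{pali}(b) I may compose with the conformal maps $a\colon A(\tau)\onto\Omega$ and $b\colon A(\omega)\onto\Omega^\ast$ and assume $\Omega=A(\tau)$, $\Omega^\ast=A(\omega)$ are circular annuli; this affects neither openness, nor connectedness, nor the separation of boundary components. I then transplant both annuli onto the sphere by the diffeomorphisms $\chi,\chi^\ast\colon A\to\s^2\setminus\{N,S\}$ used in \S\ref{stasec}, under which the open annulus corresponds to $\s^2\setminus\{N,S\}$ and each of the two boundary circles corresponds to one of the poles $N,S$. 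In particular $\overline{\Omega^\ast}$ corresponds to all of $\s^2$, with $\Omega^\ast\leftrightarrow\s^2\setminus\{N,S\}$ and the two components of $\partial\Omega^\ast\leftrightarrow\{N,S\}$. Passing to the $c\delta$–limit of the approximating diffeomorphisms $h_j$ yields the continuous spherical extension $\bow h\colon\s^2\onto\s^2$, which fixes the poles, $\bow h(N)=N$ and $\bow h(S)=S$: indeed the $c\delta$–convergence forces $\delta^\rho_h\to 0$ as $z\to\partial\Omega$, i.e. $h(z)\to\partial\Omega^\ast$, which on the sphere says exactly that the poles are carried to the poles, and they are fixed because each $h_j$ is sense–preserving. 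Surjectivity of $\bow h$ onto $\s^2$ follows from Lemma~\ref{surjective}.

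With these identifications a point $z$ lies in $G$ precisely when $\bow h(z)\notin\{N,S\}$, so that on the sphere
$$G=\bow h^{-1}\!\left(\s^2\setminus\{N,S\}\right),\qquad \s^2\setminus G=\bow h^{-1}(N)\cup\bow h^{-1}(S).$$
Since $\s^2\setminus\{N,S\}$ is connected, $\s^2$ is compact, and $\bow h$ is monotone by Lemma~\ref{mono}, Proposition~\ref{why} gives at once that $G=\bow h^{-1}(\s^2\setminus\{N,S\})$ is connected; being also open, $G$ is a domain.

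For the separation I observe that $E_N:=\bow h^{-1}(N)$ and $E_S:=\bow h^{-1}(S)$ are compact and connected by monotonicity (Lemma~\ref{mono}), are nonempty since $N\in E_N$ and $S\in E_S$, and are disjoint as fibres over the distinct points $N\neq S$. Two disjoint nonempty compact connected sets have positive distance apart, so $\s^2\setminus G=E_N\cup E_S$ has exactly the two connected components $E_N$ and $E_S$, which separate $N$ from $S$. Transporting this back through the diffeomorphism $\overline\Omega\to\s^2$ that sends the two boundary components of $\Omega$ to $N$ and $S$, I conclude that these two components lie in different components of $\C\setminus G$, which is the assertion.

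The delicate point of the argument is the second step: the careful verification that under the spherical identification the set $\{z\colon h(z)\in\partial\Omega^\ast\}$ corresponds exactly to $\bow h^{-1}(\{N,S\})\setminus\{N,S\}$ and that $\bow h$ fixes the poles. Once this boundary bookkeeping, encoded by the boundary distance function $\delta^\rho_h$ and the $c\delta$–convergence of the approximating diffeomorphisms, is firmly in place, the monotonicity results (Lemma~\ref{mono} together with Proposition~\ref{why}) do all the remaining topological work.
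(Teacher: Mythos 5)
Your argument is correct and follows essentially the route the paper itself relies on: the paper does not reprove this statement but cites \cite[Lemma~3.10]{inve}, whose proof is precisely this combination of the monotone spherical extension $\bow{h}$ (Lemmas~\ref{surjective} and~\ref{mono}) with Whyburn's theorem (Proposition~\ref{why}) applied to $\bow{h}^{-1}(\s^2\setminus\{N,S\})$ and to the two fibres over the poles. Two cosmetic points: sense-preservation alone does not force $\bow{h}(N)=N$ (a sense-preserving homeomorphism of annuli may swap the ends, so one should simply fix the labelling for large $j$), and the final transport back to $\C\setminus G$ is via the monotone quotient $\Ch\to\s^2$ collapsing each complementary component of $\Omega$ to a pole (to which Proposition~\ref{why} applies again), not via a diffeomorphism of $\overline{\Omega}$.
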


Now we formulate the following extension of
\label{wclosed}\cite[Lemma~3.13]{inve}.
\begin{lemma}\label{exc}
Let $\Omega$ and $\Omega^\ast$ be bounded doubly connected planar
domains. Assume that the boundary components of $\Omega$ do not
degenerate into points. If a sequence $\{h_j\}\subset
\D^\rho(\Omega,\Omega^*)$ converges weakly in $W^{1,2}$, then its
limit belongs to $\D^\rho(\Omega,\Omega^*)$
\end{lemma}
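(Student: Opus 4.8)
The plan is to show that the weak $W^{1,2}$ limit $h$ of a sequence $\{h_j\}\subset\D^\rho(\Omega,\Omega^*)$ satisfies all four conditions of Definition~\ref{defdef}. By passing to a subsequence I may assume $h_j\rightharpoonup h$ weakly in $W^{1,2}$, and hence (on bounded domains) strongly in $L^2$ and pointwise a.e. after a further subsequence. First I would verify condition~\eqref{dd1}: weak convergence in $W^{1,2}$ immediately gives $h\in W^{1,2}$ with $|dh|\in L^2$, since $\int_\Omega|dh|^2\le\liminf_j\int_\Omega|dh_j|^2<\infty$ by weak lower semicontinuity of the $\rho$-Dirichlet energy (here one uses that $\rho^2\circ h_j\to\rho^2\circ h$ to handle the metric weight, invoking continuity of $\rho$ and the a.e. convergence of $h_j$).

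Next I would establish condition~\eqref{dd2}, nonnegativity of the Jacobian, and condition~\eqref{dd2p}, the area bound $\int_\Omega\rho^2(h(z))J_h\le\mathcal{A}(\rho)$. The Jacobian $J_{h_j}\ge0$ a.e.\ for each $j$; since the Jacobian is a null-Lagrangian, $J_{h_j}$ converges to $J_h$ in the sense of distributions under weak $W^{1,2}$ convergence in the plane, so the sign is preserved and $J_h\ge0$ a.e. For the area bound I would combine the weighted change-of-variables formula~\eqref{useful} from Corollary~\ref{useless} with Fatou-type lower semicontinuity: each $h_j$ satisfies $\int_\Omega\rho^2(h_j)J_{h_j}\le\mathcal{A}(\rho)$, and the weak convergence of $J_{h_j}$ together with the uniform convergence built into the $c\delta$-structure should transfer this bound to the limit.

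The main obstacle, and the heart of the lemma, is producing condition~\eqref{dd3}: an approximating sequence of diffeomorphisms converging $c\delta$-uniformly to $h$. The difficulty is that weak $W^{1,2}$ convergence of the $h_j$ does not by itself control the boundary distance functions $\delta^\rho_{h_j}$, nor does it give uniform convergence on compacta; these are exactly what $c\delta$-convergence demands. The strategy, following \cite[Lemma~3.13]{inve}, is to use the monotonicity of the boundary extension $\bow{h}$ (Lemma~\ref{mono}) and the separation property of the good set $G$ (Lemma~\ref{goodset}) to show that the limit $h$ inherits enough topological structure to be a deformation. Concretely, I would first argue that each $h_j$, being a deformation, already possesses its own approximating diffeomorphisms; then, via a diagonal argument over these double-indexed families, I would extract a single sequence of diffeomorphisms approaching $h$. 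The delicate point is establishing uniform convergence of the boundary distance functions $\delta^\rho_{h_j}\to\delta^\rho_h$ on $\overline{\Omega}$: this requires equicontinuity of the $\delta^\rho_{h_j}$, which I expect to obtain from the uniform $W^{1,2}$ energy bound through a modulus-of-continuity estimate (a length-area argument on the annular geometry), together with the fact that $\rho$ is allowable so that $\dist_\rho$ is comparable to the Euclidean distance on compacta.

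The hypotheses that $\Omega,\Omega^*$ are bounded and that the boundary components of $\Omega$ do not degenerate to points are used precisely here: boundedness gives the compact embedding $W^{1,2}\hookrightarrow L^2$ and uniform control needed for the distance functions, while nondegeneracy of the boundary ensures that $\bow{h}$ is a genuine monotone map of the sphere so that Proposition~\ref{why} applies and the good set $G$ separates the boundary components. I would conclude by checking that the extracted diffeomorphism sequence indeed satisfies both items of the $c\delta$-convergence definition, thereby certifying $h\in\D^\rho(\Omega,\Omega^*)$ and completing the proof.
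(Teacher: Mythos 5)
Your proposal follows essentially the same route as the paper: the paper also reduces to circular annuli by conformal change of variables, defers the verification of conditions \eqref{dd1}--\eqref{dd3} and the diagonal extraction of approximating diffeomorphisms to the argument of \cite[Lemma~3.13]{inve}, and isolates as the only genuinely new ingredient the uniform control of the boundary distance functions $\delta^\rho_{h_j}$ near $\partial\Omega$ in terms of the energy bound alone. The ``length-area modulus-of-continuity estimate'' you anticipate is exactly the Courant--Lebesgue lemma (Lemma~\ref{mopo}) combined with monotonicity (Lemma~\ref{mono}), which the paper packages as Corollary~\ref{pol} in place of \cite[Proposition~3.11]{inve}.
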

\begin{proof}
The key result needed for the proof of \cite[Lemma~3.13]{inve} was
\cite[Poposition~3.11]{inve}. Due to different metric, instead od
\cite[Poposition~3.11]{inve} we need here a corollary of following
Courant-Lebesgue lemma:
\begin{lemma}\cite[Lemma~1.3.2]{Job1}.\label{mopo} Let $\Omega=A(r,R)$ be a circular annulus.
Let $\Omega^*$ be a bounded doubly connected domain with allowable
metric $\rho$
 and with distance function $\dist_\rho(\cdot,\cdot)$. Let $ 0 < \epsilon <\min\{1,
(R-r)/2\}$ and $z_0\in {\Omega}$ and define $S(z_0,\epsilon):=\{z\in
\Omega\colon |z-z_0|=\epsilon\}$. Suppose $f\in
W^{1,2}(\Omega,\Omega^*)$, with $\E^\rho[f] \le  K$. Then there
exists some $\nu\colon\epsilon < \nu < \sqrt{\epsilon}$ for which
$f_{|S(z_0,\nu)\cap \Omega}$ is absolutely continuous and satisfies
\begin{equation}\label{a35}\dist_\rho(f(z_1),f(z_2))\le (8\pi K)^{1/2}(\log(1/\epsilon))^{-1/2} \end{equation} for all $z_1,z_2\in S(z_0,\nu)\cap \Omega$.
\end{lemma}
\begin{corollary}\label{pol}
Let $f\in \D^\rho(\Omega, \Omega^*)$. Then for
$\dist(z,\partial\Omega)<\sqrt{3}\epsilon/2$  we have $$\dist_\rho
(f(z),
\partial \Omega^\ast)\le (8\pi \E^\rho[f])^{1/2}(\log(1/\epsilon))^{-1/2} .$$
\end{corollary} \begin{proof}[Proof of Corollary~\ref{pol}] Let $K=\E^\rho[f]$. Pick $\epsilon>0$ and let $z^n_0$, $n=1,\dots, n_0$ be a sequence
of points of $\Omega$ such that
$$\partial \Omega\subset \bigcup_{n=1}^{n_0}U(z^n_0,\nu_k/2)\text{   and   } S(z^n_0,\epsilon)\cap
\partial\Omega\neq\emptyset.$$ Here $\nu_k=\nu(z^n_0)\in[\epsilon,\sqrt{\epsilon}]$ is provided by Lemma~\ref{mopo} and by  $U(z,r)$ we denote the open disk in the complex plane with
the center $z$ and the radius $r$. It follows from \eqref{a35} that
$$\dist_\rho (f(z),
\partial \Omega^\ast)\le (8\pi K)^{1/2}(\log(1/\epsilon))^{-1/2} $$
provided that $z\in \Omega$ and $z\in S(z^n_0,\nu_k)$. Then for
$$z\in
\partial (\bigcup_{n=1}^{n_0}U(z^n_0,\nu_k))\cap \Omega$$ we have
$$\dist_\rho (f(z),
\partial \Omega^\ast)\le (8\pi K)^{1/2}(\log(1/\epsilon))^{-1/2} .$$
Since, by Lemma~\ref{mono} $f$ is a monotone mapping, it follows
that
$$\dist_\rho (f(z),
\partial \Omega^\ast)\le (8\pi K)^{1/2}(\log(1/\epsilon))^{-1/2} $$
for $\dist(z,\partial\Omega)<\sqrt{3}\epsilon/2.$ \end{proof} Let
$q$ be a conformal mapping of the annulus $A(\tau)$ onto $\Omega$
and $p$ a conformal mapping of $\Omega^\ast$ onto $A(\omega)$
($0<\tau,\omega<\infty$). Then $f\in D^\rho(\Omega,\Omega^*)$ if and
only if $h=p\circ f\circ q\in D^\rho(A(\tau),A(\omega))$. The rest
of the proof of Lemma~\ref{exc} is similar to that of
\cite[Lemma~3.13]{inve}.
\end{proof}
Since the $\rho-$Dirichlet energy is weak lower semicontinuous (see
for example \cite[Lemma~2.1]{SY}), Lemma~\ref{wclosed} has the
following useful corollary.
\begin{corollary}\label{attain}
Under the hypotheses of Lemma~\ref{exc} there exists $h\in \D^\rho
(\Omega, \Omega^\ast)$ such that ${\E^\rho}[h]= \EE^\rho (\Omega,
\Omega^\ast)$.
\end{corollary}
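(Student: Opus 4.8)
The plan is to run the direct method of the calculus of variations on the two ingredients just assembled: the weak sequential closedness of the class $\D^\rho(\Omega,\Omega^\ast)$ furnished by Lemma~\ref{exc}, and the weak lower semicontinuity of $\E^\rho$ (\cite[Lemma~2.1]{SY}). First I would record that the infimum $\EE^\rho(\Omega,\Omega^\ast)$ in \eqref{en1} is a finite, strictly positive number: the class is nonempty by Lemma~\ref{laterr}, each deformation satisfies $|dh|\in L^2$ and hence has finite energy, while \eqref{ener2} bounds every energy below by $2\mathcal{A}(\rho)$. I would then fix a minimizing sequence $\{h_j\}\subset\D^\rho(\Omega,\Omega^\ast)$ with $\E^\rho[h_j]\to\EE^\rho(\Omega,\Omega^\ast)$ and, after discarding finitely many terms, assume $\E^\rho[h_j]\le C$ for all $j$.

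The heart of the argument is to extract a weakly $W^{1,2}$-convergent subsequence, and this is where I expect the main difficulty. Using Lemma~\ref{pali}(a),(b),(d) I would first conformally transport the problem to circular annuli $A(\tau)$ and $A(\omega)$ carrying the metric $\rho_1$, a reduction that preserves energies, the deformation class, and allowability; by Lemma~\ref{pali}(e) the local and global Sobolev classes then coincide, so $W^{1,2}$-boundedness is the only point at issue. Each $h_j$ takes values in the bounded set $\overline{A(\omega)}$, so $\{h_j\}$ is uniformly bounded in $L^\infty$ and hence in $L^2$; the obstacle is the gradient, because the energy controls only the weighted integral $\int 2\rho_1^2(h_j)\big(|\partial h_j|^2+|\bar\partial h_j|^2\big)$, and the weight $\rho_1\circ h_j$ may tend to $0$ where $h_j$ approaches $\partial A(\omega)$ (recall $\rho$ is set to $0$ on the boundary). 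The clean way around this is to read $\E^\rho[h_j]$ as the Dirichlet energy of $h_j$ viewed as a map into the Riemann surface $(\Omega^\ast,\rho)$, which has finite area and bounded Gauss curvature: isometrically embedding this surface (or its metric completion) into some $\R^k$ turns the uniform energy bound into a uniform $W^{1,2}(\Omega,\R^k)$ bound on the compositions, whose images lie in a fixed compact set, so a subsequence converges weakly in $W^{1,2}$ and strongly in $L^2$ by Rellich. The controlled geometry (finite area and $\sup|\mathcal{K}|<\infty$) together with the Courant--Lebesgue estimate of Corollary~\ref{pol}, which pins $h_j$ near $\partial A(\omega)$ precisely where the weight degenerates, is what should keep this extraction honest near the boundary.

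With a subsequence $h_j\rightharpoonup h$ weakly in $W^{1,2}$ (and $h_j\to h$ in $L^2$) in hand, the conclusion becomes routine. Lemma~\ref{exc} applies to give $h\in\D^\rho(\Omega,\Omega^\ast)$, whence $\E^\rho[h]\ge\EE^\rho(\Omega,\Omega^\ast)$; on the other hand weak lower semicontinuity of the $\rho$-energy yields $\E^\rho[h]\le\liminf_j\E^\rho[h_j]=\EE^\rho(\Omega,\Omega^\ast)$. Combining the two inequalities gives $\E^\rho[h]=\EE^\rho(\Omega,\Omega^\ast)$, so $h$ is the asserted minimizer, and transporting back through the conformal maps of Lemma~\ref{pali} produces the minimizer on $\Omega$ and $\Omega^\ast$. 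The only genuinely delicate step is the weak compactness of the minimizing sequence under a possibly degenerate metric; everything else is a formal application of the closedness and lower-semicontinuity already in place.
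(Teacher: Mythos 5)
Your argument has exactly the skeleton of the paper's proof, which is essentially a single sentence: take a minimizing sequence, extract a weakly convergent subsequence, invoke Lemma~\ref{exc} to keep the weak limit inside $\D^\rho(\Omega,\Omega^\ast)$, and finish with the weak lower semicontinuity of the $\rho$-energy from \cite[Lemma~2.1]{SY}. Where you go beyond the paper is the weak-compactness step. The paper does not discuss it at all: under the hypotheses of Lemma~\ref{exc} both domains are bounded, and via Lemma~\ref{pali} one may pass to circular annuli, where the maps are uniformly bounded in $L^\infty$ and only the gradient bound is at issue; the $W^{1,2}$-boundedness of a minimizing sequence is then taken for granted. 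Your worry that the energy controls only the $\rho$-weighted gradient, and that $\rho$ is allowed to degenerate at $\partial\Omega^\ast$, is a legitimate point the paper glosses over. However, the remedy you propose does not, as stated, deliver the hypothesis of Lemma~\ref{exc}: after an isometric embedding of $(\Omega^\ast,\rho)$ into $\R^k$ the gradient of the composed map has size comparable to $\rho(h_j)\,\abs{Dh_j}$, so weak $W^{1,2}$-compactness of the embedded maps is just a restatement of the energy bound and yields no Euclidean $W^{1,2}$ bound on $h_j$ itself --- and it is ordinary weak $W^{1,2}$ convergence of the complex-valued maps $h_j$ that Lemma~\ref{exc} requires. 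To close that loop you would either need $\rho$ bounded away from zero on $\overline{\Omega^\ast}$ (in which case the bound is immediate and your digression is unnecessary), or a version of Lemma~\ref{exc} adapted to the weaker mode of convergence your embedding produces, perhaps combined with the localization near $\partial\Omega$ supplied by Corollary~\ref{pol}. Everything after the extraction --- closedness of the class plus lower semicontinuity --- coincides with the paper's argument.
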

Finally let us formulate the following property of Sobolev
homeomorphisms.
\begin{lemma}\label{laterr}\cite[Lemma~3.15]{inve}
Let $\Omega$ and $\Omega^*$ be doubly connected domains in
$\C$. Then $\Ho^{1,2}(\Omega,\Omega^*)$ is nonempty, except for one
degenerate case when  $\Mod\Omega=\infty$ and $\Mod\Omega^*<\infty$.
In this case there is no homeomorphism $h\colon \Omega \onto
\Omega^\ast$ of Sobolev class $W^{1,2}$.
\end{lemma}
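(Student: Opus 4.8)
The plan is to normalize both domains to circular models and then split into cases according to whether the moduli are finite. By Lemma~\ref{pali} there are univalent conformal maps $a\colon\Omega\onto A(\tau)$ and $b\colon A(\omega)\onto\Omega^*$ with $\tau=\Mod\Omega$, $\omega=\Mod\Omega^*$, and under them $\Ho^{1,2}(\Omega,\Omega^*)$ is nonempty if and only if $\Ho^{1,2}(A(\tau),A(\omega))$ is nonempty for the transported allowable metric $\rho_1$. So it suffices to treat $f\colon A(\tau)\onto A(\omega)$ with $\tau,\omega\in(0,\infty]$, recalling that $A(\infty)=\{0<|z|<1\}$ is the punctured disk, and to show nonemptiness unless $\tau=\infty$ and $\omega<\infty$.

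For the three non-degenerate cases I would produce explicit homeomorphisms and bound their energy. If $\tau,\omega<\infty$, the radial power stretch $f(re^{i\theta})=r^{\omega/\tau}e^{i\theta}$ is a sense-preserving diffeomorphism of $A(\tau)$ onto $A(\omega)$ that is $K$-quasiconformal with $K=\max\{\omega/\tau,\tau/\omega\}$; the pointwise distortion bound gives $|df|^2\le (K+K^{-1})\rho_1^2(f)J_f$, so by the change of variables formula (Corollary~\ref{useless}) $\E^{\rho_1}[f]\le (K+K^{-1})\mathcal{A}(\rho_1)<\infty$. If $\tau=\omega=\infty$, both model domains are the punctured disk and the identity is $\rho_1$-conformal, whence by \eqref{ener2} $\E^{\rho_1}[\id]=2\mathcal{A}(\rho_1)<\infty$. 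If $\tau<\infty=\omega$, no quasiconformal competitor exists because the moduli are not comparable, so a genuine collapse of the inner circle $|z|=e^{-\tau}$ onto the puncture is unavoidable. Here I would take a radial homeomorphism $f(re^{i\theta})=s(r)e^{i\theta}$ with $s\colon[e^{-\tau},1]\to[0,1]$ increasing, $s(e^{-\tau})=0$, $s(1)=1$, chosen as a power profile near the outer boundary and affine near $e^{-\tau}$. Using $|f_{\bar z}|^2=\tfrac14(s'-s/r)^2$ in \eqref{ener2}, the outer part is quasiconformal and again dominated by $\mathcal{A}(\rho_1)$, while on the collar adjacent to the puncture the finiteness of the energy would be read off from the size of $\rho_1$ there.

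For the degenerate case $\tau=\infty$, $\omega<\infty$ I claim no homeomorphism with $\int_\Omega|Dh|^2<\infty$ exists. Precompose a hypothetical $h\colon\Omega\onto\Omega^*$ with a conformal map $q\colon A(\infty)\onto\Omega$; since the Dirichlet integral is conformally invariant under precomposition, $H=h\circ q\colon A(\infty)\to\Omega^*$ still satisfies $\int_{A(\infty)}|DH|^2<\infty$. In polar coordinates $\int_{A(\infty)}|DH|^2\,dA\ge\int_0^1\Big(\int_0^{2\pi}|\partial_\theta H(\nu e^{i\theta})|^2\,d\theta\Big)\frac{d\nu}{\nu}$, and because $\int_0^{1/2}d\nu/\nu=\infty$ there is a sequence $\nu_k\downarrow0$ with $\int_0^{2\pi}|\partial_\theta H(\nu_k e^{i\theta})|^2\,d\theta\to0$; this is exactly the Courant--Lebesgue mechanism of Lemma~\ref{mopo}. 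By Cauchy--Schwarz the oscillation of $H$ on $|z|=\nu_k$ tends to $0$, so $\diam H(S(0,\nu_k))\to0$. On the other hand each $S(0,\nu_k)$ is a non-contractible Jordan curve in $A(\infty)$, so $H(S(0,\nu_k))$ is a Jordan curve separating the boundary components of $\Omega^*$ and hence enclosing a complementary component $E$ of $\Omega^*$, which is a non-degenerate continuum since $\Mod\Omega^*<\infty$; thus $\diam H(S(0,\nu_k))\ge\diam E>0$, a contradiction.

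The main obstacle is the finite-to-infinite case: unlike the other two it has no quasiconformal competitor, one must collapse an entire boundary circle onto the puncture, and the extra term $4\int\rho_1^2(f)|f_{\bar z}|^2$ in \eqref{ener2} is not controlled by the area alone. Making it finite requires understanding the behaviour of the allowable metric near the puncture: boundedness of $\rho_1$ there (consistent with the convention $\rho\equiv0$ on $\partial\Omega^*$, so that the puncture is at finite $\rho_1$-distance) makes the affine collapse have finite energy, and this is precisely where the allowability hypotheses are needed. By contrast the non-existence half is robust and uses only conformal invariance of the Dirichlet energy and the diameter bound for separating curves.
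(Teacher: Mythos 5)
The paper offers no proof of this lemma at all --- it is quoted verbatim from \cite[Lemma~3.15]{inve}, where it is proved for the Euclidean metric on bounded domains --- so your architecture (reduce to circular models via Lemma~\ref{pali}, exhibit explicit finite-energy homeomorphisms case by case, and run a Courant--Lebesgue plus separation argument in the degenerate case) is the natural one and is essentially that of \cite{inve}. Your first case is fine: the power stretch is a diffeomorphism of $A(\tau)$ onto $A(\omega)$ and the bound $\E^{\rho_1}[f]\le (K+K^{-1})\mathcal{A}(\rho_1)$ via the change of variables formula is correct.

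The genuine gap is in the case $\Mod\Omega<\infty=\Mod\Omega^*$, exactly the case you flag as the obstacle. You justify finiteness of the energy of the collapsing map by asserting that $\rho_1$ is bounded near the puncture, ``consistent with the convention $\rho\equiv 0$ on $\partial\Omega^*$''; but that convention is merely a definition of the extension of $\rho$ and says nothing about its limiting behaviour, and allowability (smooth, nonvanishing, bounded curvature, finite area) does not force boundedness at the puncture. For $\rho_1(w)=|w|^{-1/2}$ on $\DD\setminus\{0\}$ (flat, finite area, hence allowable but unbounded at $0$) your affine collapse has infinite energy, since the normal term $\int \rho_1^2(s)\,s'^2\,r\,dr=\int \frac{s'^2 r}{s}\,dr$ diverges for affine $s$; here a profile $s\sim(r-e^{-\tau})^2$ would repair the construction. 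Worse, for the allowable metric $\rho_1(w)=\bigl(|w|\log(e/|w|)\bigr)^{-1}$ (Gauss curvature $\equiv -1$, area $2\pi$) the puncture lies at infinite $\rho_1$-distance, so for a.e.\ $\theta$ the image of the radius has infinite $\rho_1$-length, and Cauchy--Schwarz against $\int_{e^{-\tau}}^1 dr/r=\tau<\infty$ gives $\E^{\rho_1}[h]=\infty$ for \emph{every} homeomorphism $h\colon A(\tau)\onto\DD\setminus\{0\}$: no choice of profile, indeed no competitor at all, works. So this case cannot be closed from allowability alone; one needs the degenerate boundary component to be at finite $\rho$-distance, which holds trivially in the Euclidean bounded setting of \cite{inve} but is an extra hypothesis here. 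Two smaller points: $\Mod\Omega=\Mod\Omega^*=\infty$ does not mean both models are the punctured disk ($\C\setminus\{0\}$ and $\DD\setminus\{0\}$ both have infinite modulus and are not conformally equivalent), and in your non-existence argument the Courant--Lebesgue estimate controls the $\rho$-diameter of $H(S(0,\nu_k))$ while your lower bound $\diam E>0$ is a Euclidean diameter; these must be reconciled (e.g.\ by locating the curves in a fixed compact subring where $\rho$ is bounded below) before the contradiction is legitimate.
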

\section{Harmonic replacement}
Let $\Omega$ be a domain in $\C$ and $U \Subset \Omega$ a simply
connected domain. For a continuous function $f\colon \Omega\to\C$
the continuous function $\mathcal{H}_{U}f \colon \Omega\to\C$ is
called the $\rho-$harmonic modification of $f$, if
$\mathcal{H}_{U}f$ is harmonic in $U$ and agrees with $f$ on
$\Omega\setminus U$. In order to apply "harmonic modification", in
this section we made a small extension of classical
Rado-Kneser-Choquet theorem for the functions which are merely
monotone on the boundary (Lemma~\ref{RKC}).
\begin{definition}
We say that a domain $K\Subset \Omega^*$ with Lipschitz boundary is
\underline{allowable} if $K$ is convex w.r. to the metric $\rho$ and
is contained in a geodesic disk $B_\nu(p)$ with:
\begin{enumerate} \item{} radius $\nu<\pi/(2\kappa)$;
\item{} the cut locus of the center $p$ disjoint from $B_{2\nu}(p)$,
\end{enumerate}
\end{definition}
\begin{lemma}\label{RKC}
Let $U$  be  a simply connected domains in $\C$ and $D$ is an
allowable domain in $\Omega^*$. Suppose that  $f$ is a homeomorphism
from $U$ onto $D$ with continuous extension $f\colon \overline{U}
\to \overline{D}$. Then there exists a unique $\rho-$harmonic
diffeomorphism $h\colon U \onto D $ which agrees with $f$ on the
boundary. In particular,  $h$ has a continuous extension to
$\overline{U}$ which coincides  with $f$ on $\partial U$.
\end{lemma}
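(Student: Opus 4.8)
The plan is to transplant the problem to the unit disk, invoke the Dirichlet theory for harmonic maps into a regular geodesic ball to produce a harmonic extension, and then run a Rado--Kneser--Choquet type argument, adapted to the $\rho$-convex target, to see that this extension is a diffeomorphism. By the Riemann mapping theorem there is a conformal homeomorphism $\phi\colon \mathbb{D}\onto U$; since harmonicity is preserved under conformal changes of the source (as noted after~\eqref{el}), it suffices to build a $\rho$-harmonic diffeomorphism $h_0\colon \mathbb{D}\onto D$ with boundary values $f\circ\phi$ on $\partial\mathbb{D}$ and then set $h=h_0\circ\phi^{-1}$. The defining properties of an allowable domain, namely that $\overline{D}$ lies in a geodesic disk $B_\nu(p)$ of radius $\nu<\pi/(2\kappa)$ whose center has cut locus outside $B_{2\nu}(p)$, are exactly the regular-ball hypotheses of Hildebrandt--Kaul--Widman (see Jost~\cite{jost}): on $B_{2\nu}(p)$ the squared $\rho$-distance to $p$ is smooth and strictly convex, which furnishes the a priori $C^0$ bound keeping the image inside $\overline{B_\nu(p)}$ and, through the maximum principle, uniqueness of the solution. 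Hence there is a unique energy-minimizing $\rho$-harmonic map $h_0\colon\mathbb{D}\to\overline{B_\nu(p)}$ with the prescribed continuous boundary data; interior elliptic regularity makes it smooth, and barrier arguments at the smooth boundary $\partial\mathbb{D}$ give continuity on $\overline{\mathbb{D}}$ with $h_0=f\circ\phi$ on $\partial\mathbb{D}$.

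The heart of the matter is that $h_0$ is a diffeomorphism onto $D$, and here the $\rho$-convexity of $D$ enters twice. The basic mechanism is Ishihara's fact that the composition of a $\rho$-harmonic map with a geodesically convex function of the target is subharmonic on $\mathbb{D}$. Applying this to the support functions of the convex set $D$ first confines the image: since the boundary values lie on $\partial D$ and each such composition is subharmonic, the maximum principle forces $h_0(\mathbb{D})\subset\overline{D}$, so $h_0$ is genuinely a harmonic map into $D$. To see that the Jacobian $J_{h_0}$ never vanishes I would argue as in the classical Kneser--Lewy proof. If $J_{h_0}(z_0)=0$ then $d(h_0)_{z_0}$ has rank at most one, so there is a nonzero cotangent direction at $h_0(z_0)$ annihilating its image; choosing the associated supporting geodesic of $D$ and its convex support function $\psi$, the subharmonic function $\psi\circ h_0$ acquires a degenerate interior critical point at $z_0$. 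Its nodal structure near $z_0$ then produces at least four arcs emanating to $\partial\mathbb{D}$, whereas the convexity of $D$ together with the fact that the boundary values parametrize $\partial D$ homeomorphically force the corresponding level set to meet $\partial D$, hence $\partial\mathbb{D}$, in at most two points. This contradiction yields $J_{h_0}\neq 0$ throughout $\mathbb{D}$.

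Since $\mathbb{D}$ is connected and $J_{h_0}$ is continuous and nonvanishing, and the boundary correspondence is orientation preserving, we get $J_{h_0}>0$ everywhere, so $h_0$ is an orientation-preserving local diffeomorphism, in particular an open map. Its boundary values parametrize $\partial D$ homeomorphically, whence the topological degree of $h_0$ onto $D$ equals one; a standard degree and monodromy argument then promotes the local diffeomorphism to a global one, so $h_0\colon\mathbb{D}\onto D$ is a diffeomorphism. Pulling back by the conformal map $\phi^{-1}$ gives the desired $\rho$-harmonic diffeomorphism $h\colon U\onto D$, which is continuous on $\overline{U}$ and agrees with $f$ on $\partial U$; uniqueness is inherited from the uniqueness in the regular-ball Dirichlet problem of the first step. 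The principal obstacle is the univalence step: transporting the Euclidean Kneser argument to a target of variable but bounded curvature requires the geodesic convexity of $D$ together with the regular-ball and cut-locus hypotheses precisely so that the relevant support functions are smooth and convex and $\psi\circ h_0$ is genuinely subharmonic.
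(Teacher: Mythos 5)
Your overall architecture (conformal transfer to the disk, Hildebrandt--Kaul--Widman existence and uniqueness in a regular geodesic ball, then a univalence argument) matches the paper's first step, but there are two genuine gaps in the univalence part. The decisive one is that the hypothesis of the lemma only provides a \emph{continuous extension} of the interior homeomorphism $f\colon U\onto D$ to $\overline{U}\to\overline{D}$; the restriction $f|_{\partial U}$ is therefore merely a monotone surjection onto $\partial D$ and may collapse entire arcs of $\partial U$ to single points of $\partial D$. You use boundary injectivity twice --- once to bound by two the number of boundary points on the level set of the support function, and once to conclude that the degree of $h_0$ is one --- and neither step survives when the boundary correspondence degenerates. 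This non-injective case is exactly what the lemma is designed to handle (the paper calls it ``a small extension of the classical Rado--Kneser--Choquet theorem for functions which are merely monotone on the boundary''), and the paper devotes the second half of its proof to it: it approximates the monotone boundary parametrization by $C^2$ diffeomorphisms fixing three points, solves the Dirichlet problem for each, invokes Jost's a priori \emph{lower} bound on the Jacobian (\cite[Corollary~7.1]{jost}) which is uniform along the sequence, and passes to the limit by Arzela--Ascoli together with $C^2$ interior convergence, so that the limit still has nonvanishing Jacobian.

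The second gap is in your Kneser--Lewy step itself, even under the assumption that the boundary values are a homeomorphism. The classical nodal-line count (a degenerate interior critical point forces at least four level arcs running to the boundary) relies on the composed function being \emph{harmonic}, so that near the critical point it behaves like the real part of $a(z-z_0)^k$. For a non-flat target metric, Ishihara's theorem only gives that the composition of a $\rho$-harmonic map with a geodesically convex function is \emph{subharmonic}, and subharmonic functions do not have this nodal structure at critical points; the four-arc count is not available. This is why the paper does not reprove univalence but cites Jost's theorem on univalency of harmonic mappings between surfaces \cite{jj}, whose proof proceeds by different means (Heinz-type a priori estimates and a continuity/degree argument), and why the subsequent non-injective boundary case is handled by approximation rather than by a direct Choquet-type argument.
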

\begin{proof}
Since a composition of a $\rho$-harmonic mapping with a conformal
mapping is itself $\rho-$harmonic, by composing by a conformal
mapping of the unit disk onto $U$ we can assume that $U$ is the unit
disk. Assume for a moment that $f\colon \partial U\to \partial D$ is
a homeomorphism. Consider the Dirichlet problem of finding a
$\rho-$harmonic map $h\colon U\rightarrow \Omega^*$ with the given
boundary values: $h|_{\partial U}=f$. By a result of Hildebrandt,
Kaul and Widman \cite{hil} this Dirichlet problem has a solution
contained in $B_\nu(p)$. Moreover by a result of Jost \cite{jj} we
obtain that, since $h\colon\partial U\rightarrow \Omega^*$ is a
homeomorphism onto a Lipschitz convex curve $\partial D$, then the
above solution $h$ is a homeomorphism.

Assume now that  $f\colon \partial U\to \partial D$ is not a
homeomorphism. Since $\partial D$ is Lipschitz, it is a rectifiable
curve with the length $l$. Let $\gamma:[0,l]\to \partial D$ be its
arc-length parametrization and define $\tilde f:[0,2\pi]\to [0,l]$
such that $\gamma(\tilde{f}(t))=f(e^{it})$. By assumptions of the
lemma, we can conclude that $\tilde f$ is monotone. By using
mollifiers we can define a sequence of $C^2$ diffeomorphisms
$\tilde{f_n}:[0,2\pi]\to [0,l]$ converging uniformly to $\tilde f$
(see \cite[p2.~351--352]{ehe} for an explicit construction of the
sequence $\tilde{f_n}$). Moreover we can assume that
$\tilde{f_n}(a_k)=\tilde f(a_k)$, where $a_k\in [0,2\pi)$, $k=1,2,3$
are three different points. Let $h_n$ be a $\rho-$harmonic
diffeomorphism satisfying the boundary condition
$h_n(e^{it})=\gamma(\tilde{f_n}(t))$.

By  \cite[Corollary~7.1]{jost}, on each disc $U(0,r):=\{z\in U\colon
|z|<r\}$, $r < 1$ , there is an a priori bound of the Jacobian
determinant of $h_n(z)$ from below i.e.
\begin{equation}\label{jaja}|J_{h_n}(z)|\ge 1/\delta,\end{equation}
where $\delta=\delta(\kappa,\nu,r,\E^\rho[h|_{U}],|B_{\nu}(p)|)$
($\delta$ also depends on a three point condition of  $h_n$ but this
is satisfied because of the previous consideration). The class of
functions $h_n$ is equicontinuous on the closed unit disk
$\overline{U}$ (see \cite[Lemma~4]{jj} for this argument). By
Arzela-Ascoli theorem we can find a subsequence of $h_n$ converging
uniformly to a mapping $h$. Moreover, by virtue of
\cite[Lemma~5~a)]{jj}, the derivatives of the sequence $h_n$ up to
the second order converge uniformly in compacts to the derivatives
of the mapping $h$. Consequently the limit function $h$ is of class
$C^2$  and satisfies \eqref{jaja}. This means that the Jacobian does
not vanish.
 Therefore $h\in C^2(U)\cap C(\overline{U})$ is a
$\rho-$harmonic function, has the prescribed boundary data $f$ and
its Jacobian is not vanishing  in the interior of $U$. Therefore $h$
is a local diffeomorphic proper mapping and by Banach-Mozur theorem
$h$ is a diffeomorphism in $U$.
\end{proof}
We now apply the harmonic modification to deformations.
\begin{lemma}[Modification Lemma]\label{harmrep}
Let $\Omega$ and $\Omega^*$ be doubly connected domains and assume
that $\rho$ is an allowable metric. Suppose that $h\in
\D^\rho(\Omega,\Omega^*)$
 satisfies $h(\Omega)=\Omega^*$.
Let $D$ be an allowable set such that $\overline{D}\subset
\Omega^*$. Denote $U=h^{-1}(D)$. Then there exists $g=\mathcal{H}_U
h$ and it satisfies the following properties
\begin{enumerate}[\ \ \ (a)]
\item $g \in \D^\rho(\Omega,\Omega^*)$
\item The restriction of $g$ to $U$ is a harmonic diffeomorphism onto $D$.
\item ${\E^\rho}[g]\le {\E^\rho}[h]$ with equality if and only if $g\equiv h$.
\end{enumerate}
\end{lemma}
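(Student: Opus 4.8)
The plan is to perform a $\rho$-harmonic replacement of $h$ on $U=h^{-1}(D)$: because $D$ is a small geodesically convex target, the Dirichlet problem solved in Lemma~\ref{RKC} applies on $U$, and I will then check that the replacement stays in the class of deformations while not increasing the energy. The first task is the geometry of $U$. Compact containment $\overline U\subset\Omega$ is immediate from the boundary distance function: since $\overline D\subset\Omega^*$, the number $d_0:=\dist_\rho(\overline D,\partial\Omega^*)$ is positive, and every $z\in U$ has $\delta^\rho_h(z)=\dist_\rho(h(z),\partial\Omega^*)\ge d_0$, so $U\subset\{\delta^\rho_h\ge d_0\}\Subset\Omega$. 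For the topology I would pass to the spherical extension $\bow h\colon\s^2\onto\s^2$, which is monotone by Lemma~\ref{mono}. Since $D$ is a topological disk (being $\rho$-convex inside a geodesic ball), both $D$ and $\s^2\setminus D$ are connected, so Proposition~\ref{why} makes $\bow h^{-1}(D)=U$ and its complement connected; an open connected subset of $\s^2$ with connected complement is simply connected. Finally $h(U)=D$ and, by surjectivity (Lemma~\ref{surjective}) and continuity, $h(\partial U)=\partial D$, with $h|_{\partial U}$ a monotone map onto the Lipschitz Jordan curve $\partial D$.

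With $U$ simply connected, $D$ allowable, and $h|_{\partial U}$ a continuous monotone surjection onto $\partial D$, Lemma~\ref{RKC} (whose proof already treats merely monotone boundary data) yields a unique $\rho$-harmonic diffeomorphism of $U$ onto $D$ agreeing with $h$ on $\partial U$. Declaring $g$ to equal this map on $U$ and $h$ on $\Omega\setminus U$ defines $g=\mathcal{H}_U h$ and settles (b). Most of (a) is then direct: since $g=h$ off $U$ and $g|_U$ is harmonic of finite energy with matching boundary trace, $g\in W^{1,2}_l$; the Jacobian is positive on $U$ and equals $J_h\ge 0$ elsewhere; and, splitting and using Corollary~\ref{useless} together with $N_\Omega(h,\cdot)=1$ a.e.\ (Lemma~\ref{multilemma}), $\int_\Omega\rho^2(g)J_g=\int_U\rho^2(g)J_g+\int_{\Omega\setminus U}\rho^2(h)J_h=\int_D\rho^2+\int_{\Omega^*\setminus D}\rho^2=\mathcal{A}(\rho)$.

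The remaining, and principal, part of (a) is the approximating-sequence condition \eqref{dd3} of Definition~\ref{defdef}. Starting from diffeomorphisms $h_j\cto h$ for $h$, I would set $U_j=h_j^{-1}(D)$ and form $g_j=\mathcal{H}_{U_j}h_j$; since $h_j|_{\partial U_j}$ is now a genuine boundary homeomorphism onto $\partial D$, Lemma~\ref{RKC} makes each $g_j$ a homeomorphism of $\Omega$ onto $\Omega^*$, which after the Rad\'o-type smoothing recorded in \S\ref{stasec} may be taken to be a diffeomorphism. The crux is the convergence $g_j\cto g$; I would derive it from stability of the harmonic Dirichlet problem under $c\delta$-convergence of the boundary data, using the same a priori Jacobian bound \eqref{jaja} and the second-order equicontinuity estimates of Jost invoked in the proof of Lemma~\ref{RKC}. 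I expect this stability-and-gluing step to be the main obstacle, since $g$ is only a deformation (generally non-injective off $U$) and the limit must be controlled simultaneously on compacta and in the boundary distance function.

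For (c) it suffices to compare energies on $U$, where $g|_U$ and $h|_U$ share the boundary values $h|_{\partial U}$ and take values in the geodesically convex ball $B_\nu(p)\supset\overline D$. The Hildebrandt--Kaul--Widman and Jost theory underlying Lemma~\ref{RKC} identifies $g|_U$ as the \emph{unique} minimizer of $\E^\rho$ among $W^{1,2}(U)$ maps into $\overline D$ with these boundary values; hence $\E^\rho[g]\le\E^\rho[h]$, with equality exactly when $h|_U$ already is that minimizer, i.e.\ when $g\equiv h$.
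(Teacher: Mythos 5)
Your proof follows the same route as the paper's: the paper's own proof of Lemma~\ref{harmrep} is a one-line reference to the corresponding argument of \cite{inve} with Lemma~\ref{RKC} substituted for their Euclidean replacement lemma and the Dirichlet principle giving (c), and your reconstruction (topology of $U$ via monotonicity of $\bow{h}$ and Proposition~\ref{why}, replacement via Lemma~\ref{RKC} using that its proof handles merely monotone boundary data, approximating sequence obtained by replacing $h_j$ on $U_j=h_j^{-1}(D)$, and energy comparison via the unique-minimizer property) is exactly that argument spelled out. The one step you flag as unfinished --- the convergence $g_j\cto g$ --- is precisely the step the paper also delegates entirely to \cite{inve}, and the stability argument you sketch for it is the one used there.
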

\begin{proof} The proof is the same as the corresponding result in \cite{inve}, but instead of \cite[Lemma~4.1]{inve} we make use of
Modification Lemma~\ref{RKC} to $h$. Moreover the inequality
${\E^\rho}[\mathcal{H}_U h]\le {\E^\rho}[h]$ follows from the
Dirichlet's principle applied to $\rho-$harmonic mapping $h$.
\end{proof}
\section{Reich-Walczak-type inequalities revisited}\label{reisec}
Here we formulate two important propositions proved in \cite{inve}.
Moreover we improve one of them. It must be said that, the
inequalities do not depend on given metrics on domains. The
inequalities in question are concerned with doubly connected
domains. Similar inequalities were established in~\cite{MM} in the
context of self-homeomorphisms of a disk that agree with the
identity mapping on the boundary.

By following \cite{inve} we introduce notation for several
quantities associated with the derivatives of a mapping $f$. We use
polar coordinates $r$ and $\theta$ and the {\it normal} and {\it
tangential} derivatives
\[f_N=f_r \quad \mbox{ and } \quad f_T= \frac{f_\theta}{r}.\]
In these terms the complex partial derivatives $f_z$ and $f_{\bar
z}$ can be expressed as
\[
f_z = \frac{e^{-i\theta}}{2}\left(f_N - i f_T\right) \text{ and }
\qquad f_{\bar z} = \frac{e^{i\theta}}{2}\left(f_N + i f_T\right).
\]
The Jacobian determinant of $f$ is
\[
J_f= \abs{f_z}^2-\abs{f_{\bar z}}^2 = \im \overline{f_N} f_T.
\]
The {\it normal} and {\it tangential distortion}  of $f$ are defined
as follows.
\begin{align}
K_N^f & := \frac{\abs{f_z+ \frac{\bar z}{z}f_{\bar z}}^2}{J_f}= \frac{\abs{f_N}^2}{ J_f}\\
K_T^f & := \frac{\abs{f_z- \frac{\bar z}{z}f_{\bar z}}^2}{J_f}=
\frac{\abs{f_T}^2} {J_f}
\end{align}
By convention we put $K_N^f=0$ and $K_T^f=0$ if the numerator
vanishes and $K_N^f=\infty$ and $K_T^f=\infty$  if the $J_f$
vanishes but the numerator does not. For a mapping $f\in
W^{1,1}_{\rm loc}$ the quantities $f_N$, $f_T$, and $J_f$ are finite
a.e. Thus  $K_N^f$ and $K_T^f$ are defined a.e. on the domain of
definition of $f$.
\begin{proposition}\label{rwrho}\cite[Proposition~5.1]{inve}
Let $\Omega$ and $\Omega^\ast$ be doubly connected domains such that
$\Omega$ separates $0$ and $\infty$. Suppose that \underline{either}
\begin{enumerate}[(a)]
\item $f\in \mathfrak{D}^\rho(\Omega, \Omega^*)$  \underline{or}
\item $f\colon\Omega \onto \Omega^*$ is a sense-preserving homeomorphism of class $W^{1,1}_{\rm loc}(\Omega, \Omega^*)$.
\end{enumerate}
Then
\begin{equation}\label{rwrho0}
2\pi \Mod\Omega^* \le \int_{\Omega} K_N^f \frac{d z}{\abs{z}^2}.
\end{equation}
\end{proposition}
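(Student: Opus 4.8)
The plan is to reduce to a round target and then run a two–step Cauchy--Schwarz (length--area) argument; since neither $K_N^f$ nor $\Mod$ depends on $\rho$, this reproduces the proof of \cite[Proposition~5.1]{inve}. First I would observe that $K_N^f$ is invariant under post-composition with a conformal map: if $\Phi$ is conformal and $g=\Phi\circ f$, then $g_N=\Phi'(f)\,f_N$ and $J_g=\abs{\Phi'(f)}^2 J_f$, so $K_N^g=K_N^f$; as $\Mod\Omega^*$ and the integral $\int_\Omega K_N^f\,\frac{dz}{\abs{z}^2}$ are unchanged, I may assume $\Omega^*=A(\omega)$ with $\omega:=\Mod\Omega^*$. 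Writing $z=re^{i\theta}$ and $\log f=U+i\Theta$ with $U=\log\abs{f}\in(-\omega,0)$, a direct computation in polar coordinates gives $\abs{f_N}^2=\abs{f}^2(U_r^2+\Theta_r^2)$ and $J_f=\abs{f}^2\,\frac{1}{r}(U_r\Theta_\theta-U_\theta\Theta_r)$, whence
\[
\frac{K_N^f}{r}=\frac{U_r^2+\Theta_r^2}{D},\qquad D:=U_r\Theta_\theta-U_\theta\Theta_r=r\,J_{\log f}>0 .
\]

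Next I would record two global identities. Extending $U$ by $-\omega$ on the bounded complementary component of $\Omega$ (the one containing $0$) and by $0$ on the unbounded one, continuity together with the ACL property makes $U$ absolutely continuous on almost every ray; since $\Omega$ separates $0$ and $\infty$, the fundamental theorem of calculus yields $\int U_r\,dr=0-(-\omega)=\omega$ along a.e.\ ray (the integrand vanishes off $\Omega$). On the other hand $\int_\Omega D\,dr\,d\theta=\int_\Omega J_{\log f}\,dz$, which by the area formula (using that $N_\Omega(f,\cdot)=1$ a.e., Lemma~\ref{multilemma}) equals the area of a fundamental strip of $\log A(\omega)=\{-\omega<\re\zeta<0\}$, namely $2\pi\omega$. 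With $D>0$, Cauchy--Schwarz in $r$ together with $U_r^2\le U_r^2+\Theta_r^2$ give, for a.e.\ $\theta$,
\[
\omega=\int U_r\,dr\le\Bigl(\int \frac{K_N^f}{r}\,dr\Bigr)^{1/2}\Bigl(\int D\,dr\Bigr)^{1/2},
\]
and a second Cauchy--Schwarz in $\theta$ then yields $(2\pi\omega)^2\le\bigl(\int_\Omega K_N^f\frac{dz}{\abs{z}^2}\bigr)\bigl(\int_\Omega D\,dr\,d\theta\bigr)=2\pi\omega\int_\Omega K_N^f\frac{dz}{\abs{z}^2}$. Dividing by $2\pi\omega>0$ gives \eqref{rwrho0}.

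The computation is mechanical once the two identities are in place, so the main obstacle is justifying them rigorously for the two hypotheses simultaneously. In case (b) $f$ is a homeomorphism and the radial identity $\int U_r\,dr=\omega$ follows from the topological fact that the boundary components of $\Omega$ map to those of $\Omega^*$. In case (a) a deformation may collapse part of $\Omega$ onto $\partial\Omega^*$, so I would instead work on the good set $G=\{z:f(z)\in\Omega^*\}$ of Lemma~\ref{goodset} and use monotonicity (Lemma~\ref{mono}), surjectivity (Lemma~\ref{surjective}) and $N_\Omega(f,\cdot)=1$ (Lemma~\ref{multilemma}) to recover both the net radial change $\omega$ and the area identity $2\pi\omega$. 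The remaining delicate point is the degenerate set $\{J_f=0\}$: there $D=0$, but $f_N=0$ forces $U_r=\Theta_r=0$ (so the contribution to both sides vanishes), while on $\{J_f=0,\ f_N\ne0\}$ one has $K_N^f=\infty$ and the inequality is trivial; hence the Cauchy--Schwarz steps may be carried out on $\{J_f>0\}$ without loss.
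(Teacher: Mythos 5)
The paper offers no proof of this proposition — it is quoted verbatim from \cite[Proposition~5.1]{inve} — and your length--area argument (conformal reduction to a round target using the invariance of $K_N^f$ under post-composition, the radial variation identity $\int U_r\,dr=\pm\omega$ along a.e.\ ray, the area bound $\int_\Omega J_f/|f|^2\le 2\pi\omega$, and the two Cauchy--Schwarz steps) is precisely the proof given in that reference. The technical points you single out — boundary correspondence along a.e.\ ray for deformations via Lemmas~\ref{goodset} and~\ref{mono}, the multiplicity-one area formula of Lemma~\ref{multilemma}, and the convention on the set $\{J_f=0\}$ — are exactly the ones that need care, and your treatment of them is correct.
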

Unlike \cite[Proposition~5.2]{inve}, our lower bound for the modulus
of the image under a deformation \underline{do not} depends on  the
rectifiability of the boundary of $\Omega^*$. This implies that the
assumption of rectifiability in \cite[Proposition~5.2]{inve}  is
redundant. Namely we have:
\begin{proposition}\label{rwthe}
Let $\A=A(r,R)$ be a circular annulus, $0\le r<R<\infty$, and
$\Omega^*$ a doubly connected domain with finite modulus and suppose
that \underline{either}
\begin{enumerate}[a)]
\item $f\in \mathfrak{D}^\rho(\A, \Omega^*)$  \underline{or}
\item $f\colon\A \onto \Omega^*$ is a sense-preserving homeomorphism of class $W^{1,1}_{\rm loc}(\A, \Omega^*)$.
\end{enumerate}
Then
\begin{equation}\label{rwthe0}
 \int_{\A} K_T^f \, \frac{d z}{\abs{z}^2} \ge 2\pi \frac{(\Mod \A)^2}{\Mod \Omega^\ast}.
\end{equation}
\end{proposition}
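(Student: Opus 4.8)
The plan is to transport the whole estimate to a round annulus by a conformal change in the \emph{target}, which is exactly the device that lets us dispense with any regularity of $\partial\Omega^*$. Put $\mu=\Mod\Omega^*$; since $\mu<\infty$, fix a conformal map $\Phi\colon\Omega^*\onto A(1,e^\mu)$ and set $F=\Phi\circ f$. Because $\Phi$ is holomorphic we have $F_T=\Phi'(f)\,f_T$ and $J_F=|\Phi'(f)|^2J_f$, so $K_T^F=K_T^f$ pointwise; in particular the left side of \eqref{rwthe0} is unchanged if computed for $F$. Writing $F=\sigma e^{i\psi}$ with $\sigma=|F|\in(1,e^\mu)$ and $\psi=\arg F$, the target modulus will enter only through the elementary integral $\int_{A(1,e^\mu)}|\zeta|^{-2}\,d\zeta=2\pi\mu$. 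We may assume the left side of \eqref{rwthe0} is finite, since otherwise there is nothing to prove.

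First I would localize on the circles $S_t=\{|z|=t\}$, $r<t<R$. By the ACL property of $W^{1,2}$ maps and Fubini, for a.e.\ $t$ the restriction $F|_{S_t}$ is absolutely continuous and $\psi_\theta=\im(F_\theta/F)$ exists a.e.\ on $S_t$. The topological input is that for a.e.\ such $t$ the image curve $f(S_t)$ winds exactly once around the core of $\Omega^*$, so that
$$\int_0^{2\pi}\psi_\theta\,d\theta=2\pi.$$
In case (b) this is immediate, since $F$ is an orientation-preserving homeomorphism and $F(S_t)$ is a Jordan curve separating the two boundary circles of $A(1,e^\mu)$; in case (a) it follows from the deformation structure (Lemma~\ref{surjective}, Lemma~\ref{mono}, Lemma~\ref{multilemma}) as in \cite{inve}. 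Using $F_\theta=t\,F_T$ and $|\psi_\theta|\le|F_\theta|/\sigma$, and splitting $t|F_T|/\sigma=\sqrt{K_T^f}\,\bigl(t\sqrt{J_F}/\sigma\bigr)$, Cauchy--Schwarz on $S_t$ gives
$$2\pi\le\int_0^{2\pi}\frac{t|F_T|}{\sigma}\,d\theta\le\left(\int_0^{2\pi}K_T^f\,d\theta\right)^{1/2}\left(\int_0^{2\pi}\frac{t^2J_F}{\sigma^2}\,d\theta\right)^{1/2}.$$
Setting $Q(t):=\int_0^{2\pi}t^2J_F\sigma^{-2}\,d\theta$, this reads $\int_0^{2\pi}K_T^f\,d\theta\ge 4\pi^2/Q(t)$.

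Next I would integrate in $t$. Since $dz/|z|^2=t^{-1}\,dt\,d\theta$,
$$\int_{\A}K_T^f\,\frac{dz}{|z|^2}=\int_r^R\frac{dt}{t}\int_0^{2\pi}K_T^f\,d\theta\ge 4\pi^2\int_r^R\frac{dt}{t\,Q(t)}\ge 4\pi^2\,\frac{\bigl(\int_r^R dt/t\bigr)^2}{\int_r^R Q(t)\,dt/t},$$
the last step being Cauchy--Schwarz together with $\int_r^R dt/t=\log(R/r)=\Mod\A$. To bound the denominator, Fubini and $J_F=|\Phi'(f)|^2J_f$ give
$$\int_r^R\frac{Q(t)}{t}\,dt=\int_{\A}\frac{J_F}{\sigma^2}\,dz=\int_{\A}\frac{|\Phi'(f)|^2}{|\Phi(f)|^2}\,J_f\,dz,$$
and the change-of-variables Corollary~\ref{useless} with $v=|\Phi'|^2|\Phi|^{-2}$, followed by the conformal substitution $\zeta=\Phi(w)$, turns this into $\int_{A(1,e^\mu)}|\zeta|^{-2}\,d\zeta=2\pi\mu=2\pi\Mod\Omega^*$ (in case (b) the change of variables for Sobolev homeomorphisms gives this with inequality in the needed direction $\le 2\pi\mu$). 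Combining the displays yields $\int_{\A}K_T^f\,dz/|z|^2\ge 2\pi(\Mod\A)^2/\Mod\Omega^*$, which is \eqref{rwthe0}.

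The main obstacle I anticipate is the topological step, namely justifying $\int_0^{2\pi}\psi_\theta\,d\theta=2\pi$ for a.e.\ $t$ in the deformation case (a), where $f$ is neither injective nor confined to $\Omega^*$ and may collapse parts of $\A$ onto $\partial\Omega^*$; on such degenerate slices $F=\Phi\circ f$ is not even defined, so one must use monotonicity and the multiplicity $N_\Omega(f,\cdot)=1$ together with the $c\delta$-approximation to control the winding on almost every circle. Everything else is a twofold Cauchy--Schwarz and a single change of variables. The genuinely new point compared with \cite[Proposition~5.2]{inve} is precisely the initial conformal transport to $A(1,e^\mu)$: it replaces the direct length--area estimate performed on $\Omega^*$ and thereby eliminates the rectifiability hypothesis on $\partial\Omega^*$.
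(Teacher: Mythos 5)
Your proof is correct and follows essentially the same route as the paper: the decisive step in both is the conformal transport $F=\Phi\circ f$ of the target onto a round annulus together with the pointwise invariance $K_T^F=K_T^f$, which is exactly how the paper eliminates the rectifiability hypothesis of \cite[Proposition~5.2]{inve}. The only difference is that where you carry out the length--area/Cauchy--Schwarz argument on the round target in full (including the winding-number step on a.e.\ circle, whose justification in the deformation case you correctly flag as the delicate point), the paper simply invokes \cite[Proposition~5.2~a)]{inve} for the mapping $F$, whose target annulus has rectifiable boundary.
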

\begin{proof} The item b) is proved in \cite[Proposition~5.2]{inve}.
Prove the item a).  Let $\Phi$ be a conformal mapping of $\Omega^*$
onto the annulus $A(r^*,1)$. Then
$$\Mod(\Omega^*)=\Mod(A(r^*,1))=\log\frac{1}{r^*}.$$ Further for
$F=\Phi\circ f$ we have $$J_F(z)=J_\Phi(f(z))\cdot
J_f(z)=|\phi'(f(z))|^2J_f(z)$$ and $$F_T=\Phi'(f(z))f_T.$$ Therefore
$$K_T^F=\frac{|F_T(z)|^2}{J_F(z)}=\frac{|f_T(z)|^2}{J_f(z)}=K_T^f.$$
From Lemma~\ref{KM}, $F\in \mathfrak{D}(\A,A(r^*,1))$.   We conclude
the proof by invoking \cite[Proposition~5.2.~a)]{inve} to the
mapping $F$ and annuli $\A$ and $A(r^*,1)$ which has a rectifiable
boundary.
\end{proof}
\section{Stationary deformations}\label{hopsec}
We call a deformation $h\in \D^\rho(\Omega,\Omega^*)$
\emph{stationary} if
\begin{equation}\label{stat}
\frac{d}{dt}\bigg|_{t=0}{\E^\rho}[h\circ \phi_t^{-1}]=0
\end{equation}
for every family of diffeomorphisms $t\to \phi_t\colon
\Omega\to\Omega$ which depend smoothly on the parameter $t\in\mathbb
R$ and satisfy
$\phi_0=\id$. The latter mean that the mapping $\Omega\times [0,\epsilon_0]\ni (t,z)\to \phi_t(z)\in \Omega $ is a smooth mapping for some $\epsilon_0>0$.  Assume as we may that $\Omega$ is a doubly connected domain with smooth boundary. 
The derivative in~\eqref{stat} exists for any $h\in
W^{1,2}(\Omega)$, see computation in \cite[p.~153-154]{Job1}
(c.f.~\cite[p.~158]{SY}). Every energy-minimal deformation is
stationary. Indeed, $h\circ \phi_t^{-1}$ belongs to
$\D^\rho(\Omega,\Omega^*)$ if $t$ is close to zero by virtue of
Lemma~\ref{KM}, because $\phi_t= \id + o(t)$. The minimal property
of $h$ implies ${\E^\rho}[h\circ \phi_t^{-1}]\ge {\E^\rho}[h]$, from
where we obtain ~\eqref{stat}.

Following verbatim the proof of \cite[Lemma 1.2.2]{Job1} but
beginning by \begin{equation}\label{inst}
\varphi(z)dz^2:=\mathrm{Hopf}(h)=\frac{\rho^2(h(z))}{4}(|h_x|^2-|h_y|^2-2
i \left<h_x,h_y\right>)dz^2
\end{equation}
instead of \cite[Eq.~(1.2.24)]{Job1}
 we obtain the following crucial properties of the stationary mapping in~\eqref{stat}:
\begin{itemize}
\item The function $\varphi:= \rho^2(h(z)) h_z\overline{h_{\bar z}}$, a priori in $L^1(\Omega)$, is holomorphic.
\item If $\partial \Omega$ is $\CC^1$-smooth then $\varphi$ extends continuously to $\overline{\Omega}$, and the quadratic differential $\varphi \, dz^2$  is real on each boundary curve of $\Omega$.
\end{itemize}
Let us consider the particular case $\Omega=A(r,R)$ with
$0<r<R<\infty$. Since $\varphi \, dz^2$ is real on each boundary
circle: $z=p e^{it}$, ($p=r,R$), the differential $\varphi \,
dz^2=-\varphi(z)p^2 e^{2it}dt^2=-z^2 \varphi(z) dt^2$ is real on
$\partial \Omega$. Thus the function $z^2 \varphi(z)$ is real on
$\partial \Omega$, and by the maximum principle to the harmonic
function $\im(z^2 \varphi(z))$ it follows that
\begin{equation}\label{above}
z^2\varphi(z)\equiv c\in\R.
\end{equation}
We now have.
\begin{lemma}\label{ctheory}
Let $\Omega=A(r,R)$ be a circular annulus, $0<r<R<\infty$, and
$\Omega^*$ a doubly connected domain. If $h\in
\D^\rho(\Omega,\Omega^*)$ is a stationary deformation, then
\begin{equation}\label{hopf1}\rho^2(h(z))
h_z\overline{h_{\bar z}} \equiv \frac{c}{z^2}\qquad \text{in }\Omega
\end{equation}
where $c\in\R$ is a constant.  Furthermore,
\begin{equation}\label{important}
\begin{cases}
\abs{h_N}^2 \le J_h, & \quad \mbox{if } c \le 0 \\
\abs{h_T}^2 \le J_h, & \quad \mbox{if } c \ge 0.
\end{cases}
\end{equation}
Finally if $\rho$ is bounded, then
\begin{equation}\label{hilb}|Dh|^2\ge
\frac{4|c|}{R^2\rho_0^2},\end{equation} where
$\rho_0=\sup_{w\in\Omega^*}\rho(w)$.
\end{lemma}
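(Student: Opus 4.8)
The first identity \eqref{hopf1} is essentially already in hand from the discussion preceding the lemma. Indeed, that discussion shows $\varphi(z)=\rho^2(h(z))\,h_z\overline{h_{\bar z}}$ is holomorphic on $\Omega=A(r,R)$ and that $z^2\varphi(z)$ is real on $\partial\Omega$; applying the maximum principle to the harmonic function $\im(z^2\varphi)$ then forces $z^2\varphi(z)\equiv c\in\R$ as in \eqref{above}, which is exactly \eqref{hopf1}. So the real content of the lemma is to squeeze the pointwise inequalities out of this single algebraic identity, and the plan is to do this by passing to normal and tangential derivatives.

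First I would rewrite \eqref{hopf1} in the coordinates of Section~\ref{reisec}. Writing $z=\abs{z}e^{i\theta}$ and substituting $h_z=\tfrac{e^{-i\theta}}{2}(h_N-ih_T)$ and $h_{\bar z}=\tfrac{e^{i\theta}}{2}(h_N+ih_T)$, a direct expansion gives
\[
z^2\,h_z\overline{h_{\bar z}}=\frac{\abs{z}^2}{4}\bigl(\abs{h_N}^2-\abs{h_T}^2-2i\,\re(h_N\overline{h_T})\bigr),
\]
the phase $e^{-2i\theta}$ produced by $h_z\overline{h_{\bar z}}$ being cancelled by $z^2=\abs{z}^2e^{2i\theta}$. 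Multiplying by $\rho^2(h)$ and invoking \eqref{hopf1}, the identity $z^2\rho^2(h)h_z\overline{h_{\bar z}}=c$ reads, almost everywhere in $\Omega$,
\[
\frac{\abs{z}^2\rho^2(h)}{4}\bigl(\abs{h_N}^2-\abs{h_T}^2-2i\,\re(h_N\overline{h_T})\bigr)=c.
\]
Since $c$ is real, separating real and imaginary parts yields the two a.e. relations $\re(h_N\overline{h_T})=0$ and $\abs{h_N}^2-\abs{h_T}^2=4c/(\abs{z}^2\rho^2(h))$.

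The crucial consequence of the first relation is that the Jacobian saturates its Cauchy--Schwarz bound. Indeed $\re(\overline{h_N}h_T)=\re(h_N\overline{h_T})=0$, so $\overline{h_N}h_T$ is purely imaginary; since $J_h=\im(\overline{h_N}h_T)\ge 0$ for a deformation (Definition~\ref{defdef}), we get $J_h=\abs{\overline{h_N}h_T}=\abs{h_N}\,\abs{h_T}$. The inequalities \eqref{important} now follow from a sign analysis of the second relation: when $c\le 0$ we have $\abs{h_N}\le\abs{h_T}$, hence $\abs{h_N}^2\le\abs{h_N}\abs{h_T}=J_h$; when $c\ge 0$ we have $\abs{h_T}\le\abs{h_N}$, hence $\abs{h_T}^2\le\abs{h_N}\abs{h_T}=J_h$. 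The degenerate locus $J_h=0$ is harmless, since there $\abs{h_N}\abs{h_T}=0$ forces the relevant derivative to vanish and the inequality persists.

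Finally, for \eqref{hilb} I would use $\abs{Dh}^2=\abs{h_N}^2+\abs{h_T}^2$ together with the elementary bound $\abs{h_N}^2+\abs{h_T}^2\ge\bigl|\,\abs{h_N}^2-\abs{h_T}^2\,\bigr|=4\abs{c}/(\abs{z}^2\rho^2(h))$; when $\rho$ is bounded, estimating $\abs{z}<R$ and $\rho(h)\le\rho_0$ in the denominator gives $\abs{Dh}^2\ge 4\abs{c}/(R^2\rho_0^2)$, which is \eqref{hilb}. Since the heavy analytic input --- holomorphicity and boundary reality of the Hopf differential --- is already supplied by the cited result of Jost, I do not expect a genuine obstacle here; the only points requiring care are the bookkeeping, namely that $\varphi$ equals its holomorphic representative only almost everywhere while $h_N,h_T,J_h$ are likewise defined only a.e., so all manipulations must be read pointwise a.e., and the recognition that reality of $c$ delivers simultaneously the vanishing cross term (hence $J_h=\abs{h_N}\abs{h_T}$) and the difference formula for $\abs{h_N}^2-\abs{h_T}^2$.
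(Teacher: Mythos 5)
Your proposal is correct and follows essentially the same route as the paper: \eqref{hopf1} is quoted from the maximum-principle discussion preceding the lemma, the identity is split into real and imaginary parts to get $\rho^2(h)(\abs{h_N}^2-\abs{h_T}^2)=4c/\abs{z}^2$ and $\re(\overline{h_N}h_T)=0$, whence $J_h=\abs{h_N}\abs{h_T}$ and \eqref{important}. The only cosmetic difference is in \eqref{hilb}, where the paper uses $\abs{h_z}\ge\abs{h_{\bar z}}$ together with $2(\abs{h_z}^2+\abs{h_{\bar z}}^2)\ge 4\abs{h_z h_{\bar z}}=4\abs{c}/(\abs{z}^2\rho^2(h))$ while you use $\abs{h_N}^2+\abs{h_T}^2\ge\bigl|\abs{h_N}^2-\abs{h_T}^2\bigr|$; these are the same estimate once the cross term vanishes.
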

\begin{proof} Proof goes along the lines of the proof of \cite[Lemma~6.1]{inve} but since we need some relations
of the proof we include the proof here. The relation ~\eqref{hopf1}
with some $c\in\R$ was already established in~\eqref{above}.
Separating the real and imaginary parts in~\eqref{hopf1} we arrive
at two equations
\begin{align}
\rho^2(h(z))(\abs{h_N}^2 - \abs{h_T}^2) &= \frac{4c}{\abs{z}^2}; \label{hopf2a}\\
\rho^2(h(z))\re (\overline{h_N} h_T) &=0. \label{hopf2b}
\end{align}
Recall that $J_h = \im \overline{h_N} h_T\ge 0$, which in view
of~\eqref{hopf2b} reads as
\begin{equation}\label{hopf5}
J_h = \abs{h_N} \abs{h_T}.
\end{equation}
Combining this with~\eqref{hopf2a} the claim~\eqref{important}
follows. The relation \eqref{hilb} follows from \eqref{hopf1} and
the fact that $|h_z|\ge |h_{\bar z}|$.
\end{proof}
Lemma~\ref{ctheory} together with Propositions~\ref{rwrho}
and~\ref{rwthe} give the following improvement of
\cite[Corollary~6.2]{inve}.
\begin{corollary}\label{cpositive} Under the hypotheses of Lemma~\ref{ctheory}, we have
\begin{itemize}
\item if $\Mod\Omega<\Mod\Omega^*$, then $c>0$
\item if $\Mod\Omega>\Mod\Omega^*$, then $c<0$.
\end{itemize}
\end{corollary}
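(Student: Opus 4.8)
The plan is to prove each implication by contradiction, turning the pointwise inequalities \eqref{important} of Lemma~\ref{ctheory} into the uniform bounds $K_N^h\le 1$ or $K_T^h\le 1$ and then playing them against the Reich--Walczak-type lower bounds of Propositions~\ref{rwrho} and~\ref{rwthe}. The one elementary computation I would record at the outset is that, writing $z=se^{i\theta}$ in polar coordinates,
$$\int_\Omega \frac{dz}{\abs{z}^2}=\int_0^{2\pi}\!\!\int_r^R\frac{s\,ds\,d\theta}{s^2}=2\pi\log\frac{R}{r}=2\pi\Mod\Omega,$$
so that the geometric quantity appearing on the right-hand side of both Reich--Walczak inequalities is exactly $2\pi\Mod\Omega$.

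For the first bullet, suppose $\Mod\Omega<\Mod\Omega^*$ and assume, for contradiction, that $c\le 0$. Then the first line of \eqref{important} gives $\abs{h_N}^2\le J_h$ a.e.; on the set where $J_h=0$ this forces $h_N=0$, so by the conventions of \S\ref{reisec} we have $K_N^h=\abs{h_N}^2/J_h\le 1$ a.e. in $\Omega$. Since $\Omega=A(r,R)$ separates $0$ and $\infty$ and $h\in\D^\rho(\Omega,\Omega^*)$, Proposition~\ref{rwrho} applies and yields
$$2\pi\Mod\Omega^*\le\int_\Omega K_N^h\,\frac{dz}{\abs{z}^2}\le\int_\Omega\frac{dz}{\abs{z}^2}=2\pi\Mod\Omega,$$
hence $\Mod\Omega^*\le\Mod\Omega$, contradicting the hypothesis (and the same chain also excludes the possibility $\Mod\Omega^*=\infty$). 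Therefore $c>0$.

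For the second bullet, suppose $\Mod\Omega>\Mod\Omega^*$ --- so in particular $\Mod\Omega^*<\infty$, which is exactly the finiteness needed to invoke Proposition~\ref{rwthe} --- and assume, for contradiction, that $c\ge 0$. Now the second line of \eqref{important} gives $\abs{h_T}^2\le J_h$ a.e., and as before $K_T^h=\abs{h_T}^2/J_h\le 1$ a.e. Applying Proposition~\ref{rwthe} with $\A=\Omega=A(r,R)$ gives
$$2\pi\frac{(\Mod\Omega)^2}{\Mod\Omega^*}\le\int_\Omega K_T^h\,\frac{dz}{\abs{z}^2}\le 2\pi\Mod\Omega,$$
so $\Mod\Omega\le\Mod\Omega^*$, again a contradiction; hence $c<0$.

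Most of the difficulty has already been absorbed into Lemma~\ref{ctheory} and the two propositions, so what remains is essentially bookkeeping. The only genuinely delicate points I would check carefully are the treatment of the degenerate set $\{J_h=0\}$ (where one must verify that \eqref{important} forces the relevant numerator to vanish, so that the convention gives $K_N^h=0$ or $K_T^h=0$ rather than $\infty$) and the confirmation that the finite-modulus hypothesis of Proposition~\ref{rwthe} is automatic in the regime $\Mod\Omega>\Mod\Omega^*$.
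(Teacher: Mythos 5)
Your argument is correct and is precisely the one the paper intends: the corollary is stated as a direct consequence of combining the inequalities \eqref{important} from Lemma~\ref{ctheory} (which give $K_N^h\le 1$ when $c\le 0$ and $K_T^h\le 1$ when $c\ge 0$) with Propositions~\ref{rwrho} and~\ref{rwthe} and the identity $\int_\Omega \abs{z}^{-2}\,dz=2\pi\Mod\Omega$. Your extra care about the set $\{J_h=0\}$ and the finiteness of $\Mod\Omega^*$ is sound and only makes explicit what the paper leaves implicit.
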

\section{Monotonicity of minimum energy function}\label{monsec}
Because the Dirichlet integral and the class of deformations are
conformally invariant (Lemma~\ref{KM}), the minimal energy
$\EE^\rho(\Omega,\Omega^*)$, defined by~\eqref{en1}, depends only on
the conformal type of $\Omega$ provided that $\Omega^\ast$ is fixed.
Moreover by Lemma~\ref{pali}, we can assume that $\Omega^*$ is a
circular annulus $A(r,1)$.  This leads us to consider a
one-parameter family of extremal problems for homeomorphisms
$A(\tau)\onto A(\omega)$. In this section consider  the quantity
$\EE^\rho(\tau,\omega):=\EE^\rho(A(\tau),A(\omega))$ as a function
of $\tau$, called the {\it minimum energy function}. Notice that
$A(\tau)$ is conformally equivalent to $A(1,e^{\tau})$ and the
latter has been used in \cite{inve}, however for some technical
reasons, see Section~\ref{K}, $A(\tau)$ has been shown to be more
appropriate. It is clear that the function $\EE^\rho(\tau,\omega)$
attains its minimum at $\tau=\omega$. Indeed, by~\eqref{ener2} for
every $\tau$ we have $\EE^\rho (\tau, \omega) \ge 2
\mathcal{A}(\rho)$, with equality if and only if $\tau=\omega$. The
following monotonicity result, which extends this observation, will
be very important in the proof of Theorem~\ref{q4}. It extends and
improves the corresponding \cite[Proposition~7.1]{inve}.
\begin{proposition}\label{q3}
Let $\omega>0$ and $\rho$ be a smooth metric with bounded Gauss
curvature in $A(\omega)$. The function $\tau\mapsto
\EE^\rho(\tau,\omega)$ is strictly decreasing for $0<\tau<\omega$
and  strictly increasing for $\tau>\omega$. Furthermore
\[\frac{d}{dt}\bigg|_{\tau=\tau_\circ} \EE^\rho (\tau, \omega)= -8
\pi c,\] where $c$ is defined in Lemma~\ref{ctheory} and the
constant $c$ depends only on $\tau$ and $\omega$.
\end{proposition}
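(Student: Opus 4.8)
The plan is to realize the change of modulus by an explicit radial inner variation and to extract the first variation of the energy through the Hopf differential supplied by Lemma~\ref{ctheory}. First I would fix $\tau_\circ$ and, using Lemma~\ref{pali} to normalize $\Omega^\ast=A(\omega)$, invoke Corollary~\ref{attain} to obtain an energy-minimal deformation $h\in\D^\rho(A(\tau_\circ),A(\omega))$. Since every energy-minimal deformation is stationary, \eqref{hopf1} gives $\varphi:=\rho^2(h)\,h_z\overline{h_{\bar z}}=c/z^2$ with $c\in\R$, and Corollary~\ref{cpositive} tells us that $c>0$ for $\tau_\circ<\omega$ and $c<0$ for $\tau_\circ>\omega$. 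To perturb the modulus I introduce the radial stretch
\[\Phi_t(z)=z\,\abs{z}^{t/\tau_\circ},\qquad z\in A(\tau_\circ),\]
which is a diffeomorphism of $A(\tau_\circ)$ onto $A(\tau_\circ+t)$ fixing the outer circle, depends smoothly on $t$, and satisfies $\Phi_0=\id$ with generator
\[\eta(z):=\frac{d}{dt}\Big|_{t=0}\Phi_t(z)=\frac{z\log\abs{z}}{\tau_\circ},\qquad \eta_{\bar z}=\frac{z}{2\tau_\circ\bar z}.\]
Because $\Phi_t^{-1}$ is quasiconformal, Lemma~\ref{KM} makes $g_t:=h\circ\Phi_t^{-1}$ a competitor in $\D^\rho(A(\tau_\circ+t),A(\omega))$, so that $\EE^\rho(\tau_\circ+t,\omega)\le\E^\rho[g_t]$, with equality at $t=0$.

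Next I would compute the first variation. The same change-of-variables calculation that underlies the stationarity relation \eqref{stat} (cf. \cite[p.~153--154]{Job1}), carried out for the family $\Phi_t$ rather than for self-maps of the domain, yields
\[\frac{d}{dt}\Big|_{t=0}\E^\rho[h\circ\Phi_t^{-1}]=-8\int_{A(\tau_\circ)}\re\big(\eta_{\bar z}\,\varphi\big)\,dx\,dy;\]
the only inputs are $\Phi_0=\id$ and smoothness in $t$, while the modulus change is reflected in the fact that $\eta$ is \emph{not} tangent to $\partial A(\tau_\circ)$. Inserting $\varphi=c/z^2$ and $\eta_{\bar z}=z/(2\tau_\circ\bar z)$ collapses the integrand to the real quantity $c/(2\tau_\circ\abs{z}^2)$, and the polar computation $\int_{A(\tau_\circ)}\abs{z}^{-2}\,dx\,dy=2\pi\tau_\circ$ gives
\begin{equation}\label{q3exp}
\EE^\rho(\tau_\circ+t,\omega)\le\E^\rho[g_t]=\EE^\rho(\tau_\circ,\omega)-8\pi c\,t+o(t),\qquad t\to0 .
\end{equation}

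From \eqref{q3exp} the monotonicity is almost immediate. The forward estimate shows the upper right Dini derivative satisfies $D^{+}\EE^\rho(\tau_\circ,\omega)\le-8\pi c$, and \eqref{q3exp} taken with a negative increment shows that the lower left Dini derivative is at least $-8\pi c$ at every $\tau_\circ$. Moreover \eqref{hopf1} and the arithmetic--geometric inequality give $\E^\rho[h]\ge\int_{A(\tau_\circ)}4\rho^2(h)\abs{h_z}\abs{h_{\bar z}}=4\abs{c}\int_{A(\tau_\circ)}\abs{z}^{-2}\,dx\,dy=8\pi\tau_\circ\abs{c}$, so $\abs{c}\le\EE^\rho(\tau_\circ,\omega)/(8\pi\tau_\circ)$ is locally bounded, which together with \eqref{q3exp} forces $\tau\mapsto\EE^\rho(\tau,\omega)$ to be continuous. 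A continuous function whose upper right Dini derivative is everywhere negative is strictly decreasing; since $c>0$ on $(0,\omega)$, this makes $\EE^\rho$ strictly decreasing there. Symmetrically, on $(\omega,\infty)$ we have $c<0$, so the lower left Dini derivative is everywhere positive and $\EE^\rho$ is strictly increasing.

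The remaining point, and the one I expect to be the main obstacle, is upgrading the one-sided estimates to the exact identity $\tfrac{d}{d\tau}\EE^\rho(\tau,\omega)\big|_{\tau_\circ}=-8\pi c$. The forward bound controls $D^{+}$ by $-8\pi c(\tau_\circ)$, but the base-point-$\tau_\circ+t$ version of \eqref{q3exp} only bounds the lower right Dini derivative below by $-8\pi\limsup_{t\to0^+}c(\tau_\circ+t)$, so the two estimates pinch only once I know that $\tau\mapsto c(\tau)$ is continuous. The plan is a compactness argument: for $\tau_n\to\tau_\circ$ with minimizers $h_n$, transplant each $h_n$ to the fixed annulus $A(\tau_\circ)$ by the radial maps above, use the uniform energy bound to pass to a weak $W^{1,2}$ limit, identify that limit as a minimizer at $\tau_\circ$ via the weak closedness of deformations (Lemma~\ref{exc}) and the weak lower semicontinuity of $\E^\rho$, and observe that weak convergence forces the holomorphic differentials $\varphi_n=c_n/z^2$ to converge locally uniformly, whence $c_n\to c(\tau_\circ)$. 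That $c$ is independent of the chosen minimizer, so that it is a genuine function of $\tau$ and $\omega$, then follows because at each of the full-measure set of points where the monotone function $\EE^\rho$ is differentiable the forward estimate forces $-8\pi c$ to equal $\tfrac{d}{d\tau}\EE^\rho$. Pushing this convergence of the minimizers through cleanly is the delicate step.
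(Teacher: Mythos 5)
Your variation is, in disguise, exactly the one the paper uses: $\Phi_t(z)=z\abs{z}^{t/\tau_\circ}$ is the power stretch $\psi(z)=\abs{z}^{\alpha-1}z$ of \eqref{stret0} with $\alpha=\tau_\circ/(\tau_\circ+t)$, so by \eqref{stret} the competitor energy is computable \emph{exactly}:
\begin{equation*}
\E^\rho[h\circ\Phi_t^{-1}]=\frac{\tau_\circ}{\tau_\circ+t}\,\E^\rho_N[h]+\frac{\tau_\circ+t}{\tau_\circ}\,\E^\rho_T[h],
\end{equation*}
which is Lemma~\ref{majorant}. Differentiating this at $t=0$ and using \eqref{hopf2a} (which gives $\E^\rho_N[h]-\E^\rho_T[h]=4c\int_{A(\tau_\circ)}\abs{z}^{-2}=8\pi c\,\tau_\circ$) reproduces your first-variation value $-8\pi c$. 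The difference is that you keep only the first-order expansion with a base-point-dependent $o(t)$, whereas the paper keeps the exact right-hand side, which is a globally valid majorant of $\EE^\rho(\tau,\omega)$ touching it at $\tau_\circ$. That global form, combined with the a priori bound \eqref{upper} on $\E^\rho_N$ and $\E^\rho_T$, immediately gives local Lipschitz continuity with locally uniform constants, hence a.e.\ differentiability, and at every point of differentiability the two one-sided difference-quotient bounds from the \emph{same} base point pinch the derivative to $(\E^\rho_T-\E^\rho_N)/\tau_\circ=-8\pi c$. This is all the paper establishes, and it is exactly what your last sentence of the monotonicity paragraph plus your final observation already contain.

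The genuine gap is in the machinery you introduce to compensate for having thrown away the exact formula. First, your continuity argument only yields upper semicontinuity of $\EE^\rho(\cdot,\omega)$: lower semicontinuity at $\tau_\circ$ requires the expansion based at the moving points $\tau_\circ+t$, and there the remainder $o(t)$ depends on the minimizer at $\tau_\circ+t$, so you have no uniformity as $t\to 0$. Second, the proposed repair by compactness does not work as stated: weak $W^{1,2}$ convergence of minimizers $h_n$ does not force the quantities $\rho^2(h_n)(h_n)_z\overline{(h_n)_{\bar z}}$, which are quadratic in the gradient, to converge to the Hopf differential of the weak limit; you can only extract a convergent subsequence of the constants $c_n$ from the bound $\abs{c_n}\le \E^\rho[h_n]/(8\pi\tau_n)$, and identifying the limit requires strong convergence you have not established. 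Both difficulties evaporate if you replace \eqref{q3exp} by the exact transformation law \eqref{stret}: no continuity of $\tau\mapsto c(\tau)$ is needed anywhere, and the independence of $c$ from the chosen minimizer follows, as you note, from the identity $\frac{d}{d\tau}\EE^\rho=-8\pi c$ at points of differentiability.
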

Similarly as in \cite{inve}, the proof of Proposition~\ref{q3}
requires auxiliary results concerning the normal and tangential
energies
\[\E^\rho_N[h]=\int_{\Omega}\rho^2 \abs{h_N}^2,\qquad  \E^\rho_T[h]=\int_{\Omega}\rho^2\abs{h_T}^2.\]
First of all ${\E^\rho}[h]=\E^\rho_N[h]+\E^\rho_T[h]$.
Both functionals $\E^\rho_N[h]$ and $\E^\rho_T[h]$ transform in a
straightforward way under composition with the power stretch mapping
\begin{equation}\label{stret0}
\psi(z):=\abs{z}^{\alpha-1}z, \qquad 0<\alpha <\infty.
\end{equation}
By using the formula $\det(D\psi(z))=\alpha |z|^{2\alpha-2}$, we
obtain
\begin{equation}\label{stret}
\E^\rho_N[h\circ \psi]= {\alpha}\, \E^\rho_N[h], \qquad
\E^\rho_T[h\circ \psi]= \frac{1}{\alpha} \,\E^\rho_T[h].
\end{equation}
As in \cite{inve}, the domain of definition of $h$ here is
irrelevant because the computation is local.
\begin{lemma}\label{majorant}
Let $\omega\in(0,\infty)$ and  $\tau_\circ\in (0,\infty)$. Suppose
that $h^\circ\in \D^\rho(A(\tau_\circ),A(\omega))$ is an
energy-minimal deformation. Then for all $0<\tau<\infty$ we have
\begin{equation}
\EE^\rho(\tau,\omega) \le
\frac{\tau_\circ}{\tau}\,\E^\rho_N[h^\circ] +
\frac{\tau}{\tau_\circ}\,\E^\rho_T[h^\circ].
\end{equation}
\end{lemma}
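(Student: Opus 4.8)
\emph{The plan is} to use the given energy-minimal deformation $h^\circ$ on $A(\tau_\circ)$ as raw material, transport it to $A(\tau)$ by precomposition with a suitable power stretch, and then read off its energy via the scaling identities~\eqref{stret}. The whole content is essentially front-loaded into those identities and into the quasiconformal stability of the deformation class (Lemma~\ref{KM}).

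First I would fix the exponent of the power stretch~\eqref{stret0} so that the transported map has domain $A(\tau)$. Set $\alpha=\tau_\circ/\tau$ and take $\psi(z)=\abs{z}^{\alpha-1}z$. In polar coordinates $\psi(re^{i\theta})=r^{\alpha}e^{i\theta}$, so $\psi$ carries the circle $\abs{z}=e^{-\tau}$ to the circle of radius $e^{-\alpha\tau}=e^{-\tau_\circ}$ and fixes the unit circle; hence $\psi$ is a diffeomorphism of $A(\tau)$ onto $A(\tau_\circ)$. As a power stretch with $0<\alpha<\infty$ it is quasiconformal, with distortion $\max\{\alpha,1/\alpha\}$. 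Matching $\alpha$ to the moduli so that $\psi$ lands precisely in $A(\tau_\circ)$ is the only point needing a moment's care, and the radial action of $\psi$ makes it transparent.

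Next I would produce the competitor. Since $\psi\colon A(\tau)\onto A(\tau_\circ)$ is quasiconformal and $h^\circ\in\D^\rho(A(\tau_\circ),A(\omega))$, Lemma~\ref{KM} gives $h^\circ\circ\psi\in\D^\rho(A(\tau),A(\omega))$. By the definition~\eqref{en1} of the minimum energy as an infimum over deformations, together with the splitting ${\E^\rho}[h]=\E^\rho_N[h]+\E^\rho_T[h]$,
\[\EE^\rho(\tau,\omega)\le {\E^\rho}[h^\circ\circ\psi]=\E^\rho_N[h^\circ\circ\psi]+\E^\rho_T[h^\circ\circ\psi].\]

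Finally I would invoke the scaling identities~\eqref{stret} with this $\alpha$. They yield $\E^\rho_N[h^\circ\circ\psi]=\alpha\,\E^\rho_N[h^\circ]=\tfrac{\tau_\circ}{\tau}\E^\rho_N[h^\circ]$ and $\E^\rho_T[h^\circ\circ\psi]=\tfrac{1}{\alpha}\E^\rho_T[h^\circ]=\tfrac{\tau}{\tau_\circ}\E^\rho_T[h^\circ]$. Substituting into the display above gives exactly the asserted inequality. I do not expect a genuine obstacle here: the argument is a direct combination of the already-granted local scaling formula~\eqref{stret} and the conformal/quasiconformal invariance of the deformation class, so the proof is short once the exponent $\alpha=\tau_\circ/\tau$ is identified.
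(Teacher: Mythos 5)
Your proposal is correct and is essentially the paper's own argument: the paper defers to the proof of \cite[Lemma~7.2]{inve}, which is precisely this composition with the power stretch $\psi(z)=\abs{z}^{\alpha-1}z$, $\alpha=\tau_\circ/\tau$, mapping $A(\tau)$ onto $A(\tau_\circ)$, combined with Lemma~\ref{KM} to stay in the deformation class and the scaling identities~\eqref{stret} to evaluate the energy of the competitor. The only cosmetic difference is that you correctly observe the energy-minimality of $h^\circ$ is never used in the derivation itself; it matters only for the way the lemma is subsequently applied.
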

\begin{proof} Proof goes along the lines of the proof of \cite[Lemma~7.2]{inve}.
\end{proof}
Let us apply Lemma~\ref{majorant} with $\tau_\circ = \omega$. In
this case $h^\circ \colon \Omega \onto \Omega^\ast$ is conformal so
$\E^\rho_N[h^\circ]=\E^\rho_T[h^\circ]=\mathcal{A}(\rho)$. We obtain
the following simple upper bound for the  minimal energy function,
\begin{equation}\label{upper}
\EE^\rho(\tau,\omega) \le \left(\frac{\omega}{\tau} +
\frac{\tau}{\omega}\right) \mathcal{A}(\rho), \quad 0< \tau <
\infty.
\end{equation}
\begin{corollary}\label{econt}
The function $\EE^\rho(\tau,\omega) $ is locally Lipschitz for
$0<\tau<\infty$.
\end{corollary}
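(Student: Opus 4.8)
The plan is to convert the one-sided upper bound of Lemma~\ref{majorant} into a two-sided comparison on compact subintervals. Fix a compact $[a,b]\subset(0,\infty)$ and pick $\tau_1,\tau_2\in[a,b]$ with, say, $\tau_1<\tau_2$. Since the circular annuli $A(\tau_i)$ and $A(\omega)$ are bounded doubly connected domains whose boundary components do not degenerate into points, Corollary~\ref{attain} supplies energy-minimal deformations $h_i\in\D^\rho(A(\tau_i),A(\omega))$, so that $\EE^\rho(\tau_i,\omega)=\E^\rho_N[h_i]+\E^\rho_T[h_i]$ for $i=1,2$.

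First I would apply Lemma~\ref{majorant} with $\tau_\circ=\tau_1$ at the parameter $\tau_2$ and subtract $\EE^\rho(\tau_1,\omega)$. Writing $\tau_1/\tau_2-1=-(\tau_2-\tau_1)/\tau_2$ and $\tau_2/\tau_1-1=(\tau_2-\tau_1)/\tau_1$ and discarding the resulting nonpositive normal term gives
\[
\EE^\rho(\tau_2,\omega)-\EE^\rho(\tau_1,\omega)\le \frac{\tau_2-\tau_1}{\tau_1}\,\E^\rho_T[h_1].
\]
Applying the lemma symmetrically with $\tau_\circ=\tau_2$ at the parameter $\tau_1$ and discarding the nonpositive tangential term gives
\[
\EE^\rho(\tau_1,\omega)-\EE^\rho(\tau_2,\omega)\le \frac{\tau_2-\tau_1}{\tau_1}\,\E^\rho_N[h_2].
\]

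The last step is a uniform bound on the one-sided energies. Because $\E^\rho_N[h_i]$ and $\E^\rho_T[h_i]$ are each dominated by the total energy $\EE^\rho(\tau_i,\omega)$, the a priori estimate \eqref{upper} yields $\E^\rho_T[h_1],\,\E^\rho_N[h_2]\le M\,\mathcal{A}(\rho)$, where $M:=\max_{\tau\in[a,b]}\bigl(\omega/\tau+\tau/\omega\bigr)<\infty$. Since $\tau_1\ge a$, combining the two displayed inequalities produces
\[
\bigl|\EE^\rho(\tau_1,\omega)-\EE^\rho(\tau_2,\omega)\bigr|\le \frac{M\,\mathcal{A}(\rho)}{a}\,|\tau_2-\tau_1|,
\]
which is the desired Lipschitz estimate on $[a,b]$; as $[a,b]$ was an arbitrary compact subinterval, local Lipschitz continuity on $(0,\infty)$ follows.

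I do not expect a genuine obstacle here: essentially all the analytic content has already been packaged into Lemma~\ref{majorant}, and the argument above is just bookkeeping with that majorant taken at each of the two points. The one point meriting a word of care is the uniform control of the \emph{separate} normal and tangential energies $\E^\rho_N[h_i]$, $\E^\rho_T[h_i]$; this is immediate from \eqref{upper} once one notes that each is bounded by the full Dirichlet energy $\EE^\rho(\tau_i,\omega)$, which is in turn uniformly bounded on the compact parameter interval.
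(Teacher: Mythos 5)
Your proof is correct and follows essentially the same route as the paper: it applies Lemma~\ref{majorant} at a pair of parameters with the roles of $\tau_\circ$ and $\tau$ interchanged, subtracts $\EE^\rho(\tau_\circ,\omega)=\E^\rho_N[h^\circ]+\E^\rho_T[h^\circ]$, and controls the resulting coefficients uniformly on compact subintervals via the a priori bound~\eqref{upper}. The only difference is that you spell out the two-sided estimate and the uniform bound on $\E^\rho_N$ and $\E^\rho_T$ explicitly, which the paper leaves implicit after displaying~\eqref{econt1}.
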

Indeed the existence of $h^\circ$ in Lemma~\ref{majorant}  is
assured by Corollary~\ref{attain}. From Lemma~\ref{majorant} for
arbitrary $0 < \tau_\circ, \tau < \infty$ we have
\begin{equation}\label{econt1}
\begin{split}
\EE^\rho(\tau,\omega) -\EE^\rho(\tau_\circ,\omega)
& \le  \frac{\tau_\circ}{\tau} \E^\rho_N[h^\circ] + \frac{\tau}{\tau_\circ} \E^\rho_T[h^\circ] -  \E^\rho_N[h^\circ] - \E^\rho_T[h^\circ]\\
&=
(\tau-\tau_\circ)\left\{\frac{\E^\rho_T[h^\circ]}{\tau_\circ}-\frac{\E^\rho_N[h^\circ]}{\tau}.\right\}
\end{split}
\end{equation}
From this we obtain the local Lipschitz property.
\begin{proof}[Proof of Proposition~\ref{q3}] 
Since $\EE^\rho(\tau,\omega)$ is locally Lipschitz, its derivative
exists for almost every $\tau \in(0,\infty)$. Fix such a point of
differentiability, say  $0<\tau_\circ<\Mod\Omega^*$ (the other
possibility is $\tau_\circ>\Mod \Omega^*$). Then as in
\cite[Proposition~7.2]{inve} we obtain
\[\frac{d}{dt}\bigg|_{\tau=\tau_\circ} \EE^\rho (\tau, \omega)= -8 \pi c.\]
Corollary~\ref{cpositive} completes the proof. This shows that $c$
depends only on $\tau_\circ$, $\omega$ and $\rho$ but not on
$h^\circ$.
\end{proof}
\section{Proof of Theorem~\ref{q4} and Theorem~\ref{thmintdist}}\label{exisec}
In order to prove our main result we need the following extension of
\cite[Proposition~8.1]{inve}.
\begin{proposition}\label{gtheory}
Let $\Omega$ and $\Omega^*$ be doubly connected domains. Suppose
that $h\in \D^\rho(\Omega,\Omega^*)$ satisfies
${\E^\rho}[h]=\EE^\rho(\Omega,\Omega^*)$. Let $G=\{z\in\Omega\colon
h(z)\in\Omega^*\}$. Then $G$ is a doubly connected domain which
separates the boundary components of $\Omega$. The restriction of
$h$ to $G$ is a $\rho-$harmonic diffeomorphism onto $\Omega^*$.
\end{proposition}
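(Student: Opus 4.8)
The plan is to follow the strategy of \cite[Proposition~8.1]{inve}, replacing the Euclidean harmonic replacement by the $\rho$-harmonic modification furnished by the Modification Lemma~\ref{harmrep}. First I would record the structural facts about $G$ that come for free. Since $h\in\D^\rho(\Omega,\Omega^*)$, Lemma~\ref{goodset} already tells us that $G=\{z\in\Omega\colon h(z)\in\Omega^*\}$ is a domain separating the two boundary components of $\Omega$. Surjectivity is equally cheap: by Lemma~\ref{surjective} we have $h(\Omega)\supset\Omega^*$, and since every $z$ with $h(z)\in\Omega^*$ lies in $G$ by definition, this forces $h(G)=\Omega^*$.

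The core of the argument is to upgrade $h|_G$ from a mere continuous surjection to a $\rho$-harmonic local diffeomorphism by exploiting energy-minimality. The idea is to cover $\Omega^*$ by allowable sets $D$ (small $\rho$-convex geodesic disks, which exist around every point because $\rho$ has bounded Gauss curvature $\kappa$). For each such $D$ with $\overline D\subset\Omega^*$ the preimage $U=h^{-1}(D)$ lies inside $G$, and I form the harmonic modification $g=\mathcal H_U h$. By the Modification Lemma~\ref{harmrep}, $g\in\D^\rho(\Omega,\Omega^*)$, the restriction $g|_U$ is a $\rho$-harmonic diffeomorphism onto $D$, and $\E^\rho[g]\le\E^\rho[h]$ with equality if and only if $g\equiv h$. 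Since $h$ minimizes the energy among all deformations, $\E^\rho[g]\ge\E^\rho[h]$, so equality holds and $g\equiv h$. Thus $h$ is already $\rho$-harmonic, and a diffeomorphism, on each $U=h^{-1}(D)$; letting the disks $D$ sweep out $\Omega^*$ I conclude that $h|_G$ is $\rho$-harmonic on all of $G$ and is a local diffeomorphism there.

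It remains to promote this local diffeomorphism to a global one. I would first argue that $h|_G\colon G\to\Omega^*$ is proper: as $z$ tends to $\partial G$ the image $h(z)$ tends to $\partial\Omega^*$, i.e.\ $\delta^\rho_h(z)\to 0$, which follows from the continuity of $h$ and the boundary-distance control of Corollary~\ref{pol} together with the very definition of $G$. A proper local diffeomorphism onto the connected surface $\Omega^*$ is a finite-sheeted covering map, and its number of sheets equals the multiplicity $N_\Omega(h,w)$ at any $w\in\Omega^*$ (all preimages of such $w$ lie in $G$). But Lemma~\ref{multilemma} gives $N_\Omega(h,w)=1$ for almost every $w\in\Omega^*$, so the covering is single-sheeted; that is, $h|_G$ is injective and hence a $\rho$-harmonic diffeomorphism of $G$ onto $\Omega^*$. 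Finally, since $\Omega^*$ is doubly connected and $G$ is diffeomorphic to it, $G$ is doubly connected as well, consistent with its separating the boundary components of $\Omega$.

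The hard part will be the passage from local to global injectivity. Although the covering-space argument is clean in outline, its hypotheses must be verified with care: one needs properness of $h|_G$, so the boundary behavior recorded in Corollary~\ref{pol} and the defining property of $G$ must be invoked precisely, and one must tie the almost-everywhere identity $N_\Omega(h,w)=1$ of Lemma~\ref{multilemma} to the (constant) sheet number of the covering. A secondary technical point is to guarantee that the localized modification is legitimate: the monotonicity of $h$ (Lemma~\ref{mono}), via Proposition~\ref{why}, ensures that each $U=h^{-1}(D)$ is connected and of the type required by Lemma~\ref{harmrep}, and one should note that the portion of $\Omega$ possibly mapped into $\partial\Omega^*$ carries vanishing Jacobian (Lemma~\ref{multilemma}) and hence does not interfere with the replacement carried out inside $G$.
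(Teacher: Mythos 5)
Your proposal is essentially correct and, for its core, coincides with the paper's argument: both obtain harmonicity and local diffeomorphy of $h|_G$ by covering $\Omega^*$ with allowable sets $D$, forming $U=h^{-1}(D)$, applying the Modification Lemma~\ref{harmrep} (on $G$, where $h(G)=\Omega^*$), and using energy-minimality to force $\mathcal{H}_U h\equiv h$. Where you diverge is the passage from local to global injectivity. The paper settles this in one line: by Lemma~\ref{mono} the fiber $h^{-1}(w)$ is connected for every $w\in\Omega^*$, while a local diffeomorphism has discrete fibers, so each fiber is a single point and $h|_G$ is a diffeomorphism onto $\Omega^*$; double connectivity of $G$ then follows since $G$ is diffeomorphic to $\Omega^*$. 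Your route instead establishes properness of $h|_G$, concludes that it is a finite-sheeted covering, and kills the extra sheets with the a.e.\ multiplicity bound $N_\Omega(h,w)=1$ of Lemma~\ref{multilemma}. This works, but it is longer and carries one point that needs care: your properness argument near $\partial\Omega$ relies on $\delta^\rho_h(z)\to 0$ forcing $h(z)$ to leave every compact $K\subset\Omega^*$, which requires $\inf_{w\in K}\dist_\rho(w,\partial\Omega^*)>0$; since an allowable metric is only assumed nonvanishing, smooth, of bounded curvature and finite area, and may degenerate toward $\partial\Omega^*$, this positivity is not automatic and should be justified (whereas no properness is needed at all in the monotonicity argument, which is why the paper's version is both shorter and more robust). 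The remaining structural facts --- $G$ a domain separating the boundary components (Lemma~\ref{goodset}), $h(G)=\Omega^*$ (Lemma~\ref{surjective}) --- you invoke exactly as the paper does.
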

\begin{proof} The fact that $G$ is a domain that separates the boundary components of $\Omega$ was established in Lemma~\ref{goodset}.
For every point $z\in G$ there exists a neighborhood $D=D_x$ in
which $h$ is a harmonic diffeomorphism. Indeed, otherwise we would
be able to find a $\rho$ deformation with strictly smaller energy by
means of Lemma~\ref{harmrep}. Namely for $w=h(z)\in \Omega^*$ we can
find an allowable neighborhood $D\subset \Omega^*$. Then by taking
$U=h^{-1}(D)$ and making  use of Lemma~\ref{harmrep} for $G$ instead
od $\Omega$ we obtain the previous fact. Thus, $h\colon
G\onto\Omega^\ast$ is a local homeomorphism. By an extension of Lewy
theorem due to Heinz, \cite[Theorem~11]{EH}, $h$ is a local
diffeomorphism. On the other hand, for each $w\in \Omega^\ast$ the
preimage $h^{-1}(w)$ is connected by Lemma~\ref{mono}. It follows
that $h\colon G\onto\Omega^*$ is a diffeomorphism. Being a
diffeomorphic image of $\Omega^*$, the domain $G$ must be doubly
connected.
\end{proof}
\begin{proof}[Proof of Theorem~\ref{q4}]
 If $\Mod\Omega=\Mod\Omega^*$, then the domains are conformally
equivalent. As observed in \S\ref{intsec}, a conformal mapping
minimizes the $\rho-$Dirichlet energy. Thus we only need to consider
the case $\Mod\Omega<\Mod\Omega^*$. Assume that $a\colon \Omega\onto
A(\tau) $ and $b\colon A(\omega)\onto \Omega^*$ are conformal
mappings.

Let $h$ and $G$ be as in Proposition~\ref{gtheory}. The existence of
such $h$ is guaranteed by Corollary \ref{attain}. Since $G$
separates the boundary components of $\Omega$, we have $\Mod G\le
\Mod A(\tau)=\tau$ with equality if and only if
$G=\Omega$~\cite[Lemma~6.3]{LVb}.
 If $\Mod G< \tau$, then by Proposition~\ref{q3}
\[
\int_G \rho^2\circ h\abs{Dh}^2 \ge \EE^\rho(\Mod G,\Omega^*) >
\EE^\rho(\tau,\Omega^*) = \int_{A(\tau)} \rho^2\circ h \abs{Dh}^2
\]
which is absurd because $G\subset A(\tau)$. Thus $G=A(\tau)$. By
Proposition~\ref{gtheory} the mapping $h\colon A(\tau)\to
A(\omega))$ is a harmonic diffeomorphism. The uniqueness statement
will follow from Proposition~\ref{needed}. Then, by Lemma~\ref{pali}
$h$ is a minimizer of $\rho_1:=\rho\circ b \cdot |b'|-$energy
between $A(\tau) $ and $A(\omega)$. Let $f^\circ = b\circ h\circ a$.
Then $f^\circ\colon \Omega\onto\Omega^\ast$ is a $\rho$-harmonic
mapping. Moreover by Lemma~\ref{pali}, $f^\circ$ is a minimizer
because $h$ is a minimizer. Indeed by Lemma~\ref{pali} $g\in
\D^\rho(A(\tau),A(\omega))$ if and only if $f=b\circ g\circ a\in
\D^\rho(\Omega,\Omega^*)$ and $\E^{\rho}[b\circ g\circ
a]=\E^{\rho_1}[g]$. Thus
\[\begin{split}\E^{\rho_1}[h]&=\min\{\E^{\rho_1}[g]\colon g\in
\D^\rho(A(\tau),A(\omega))\}\\&=\min\{\E^{\rho}[b\circ g\circ a]:
g\in \D^\rho(A(\tau),A(\omega))\}\\&=\min\{\E^{\rho}[f]\colon f\in
\D^\rho(\Omega,\Omega^*)\}=\E^{\rho}[f^\circ].\end{split}\]
\end{proof}
\begin{proof}[Proof of Theorem~\ref{thmintdist}]
Suppose $\Mod \Omega \le \Mod \Omega^\ast$ and let $f_\circ \colon
\Omega \onto \Omega^\ast$ be an energy-minimal diffeomorphism
provided to us by Theorem~\ref{q4}. Assume as we may also that
$\Omega=A(r,1)$. For every homeomorphism $g \colon \Omega^\ast \onto
\Omega$ with $L^1=L^1(\Omega^*,d\mu)$ integrable distortion the
inverse map $f=g^{-1} \colon \Omega \onto \Omega^\ast$ belongs to
the Sobolev class $W^{1,2}_{loc}(\Omega)$ (because $\rho$ is smooth
in $\Omega$). Let $\Omega_n=A(r+1/n,1-1/n)$ be an exhaustion by
compact sets of $\Omega=A(r,1)$ and let $\Omega^*_n=f(\Omega_n)$.
Then from Corollary~\ref{iva} below and ~\cite{HK} we have
\begin{equation}\label{blah}
\begin{split}
\int_{\Omega^\ast} \rho^2(w) K_g(w)\, d w &= \lim_{n\to \infty}
\int_{\Omega^\ast_n} \rho^2(w) K_g(w)\, d w\\&=\lim_{n\to
\infty}\int_{\Omega_n} \rho^2(f(z))\abs{Df(z)}^2 \, dz\\&\ge
\lim_{n\to \infty}\int_{\Omega_n}
\rho^2(f_\circ(z))\abs{Df_\circ(z)}^2 \, dz\\&= \int_\Omega
\rho^2(f(z)) \abs{Df_\circ (z)}^2\, dz\\&= \int_{\Omega^\ast}
\rho^2(w) K_{g_\circ} (w)\, dw
\end{split}
\end{equation}
where $g_\circ=f_\circ^{-1}$.  The latter follows by change of
variables in integral, where the substitution function is the
$C^\infty$-smooth diffeomorphism $g_\circ$.  If equality holds
in~\eqref{blah} then, by Theorem~\ref{q4}, the mapping $f_\circ^{-1}
\circ f$ is conformal.
\end{proof}
\section{Convexity of the minimum energy function}\label{consec}
In \S\ref{monsec} we proved that for every doubly connected domain
$\Omega^*$ the function $\EE^\rho(\tau,\omega)$ is decreasing for
$0<\tau<\omega$ and increasing for $\tau>\omega$. The minimum of
this function is attained at $\tau=\omega$, i.e., in the case of
conformal equivalence. In this section we prove:

\begin{theorem}\label{convextheorem}
Let $\omega>0$. The function $\tau\mapsto \EE^\rho(\tau,\omega)$ is
strictly convex for $0<\tau<\omega$.
\end{theorem}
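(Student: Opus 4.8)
The plan is to reduce, by conformal invariance, to circular annuli and then to recognize strict convexity as the strict monotonicity of the Hopf constant $c(\tau)$ attached to the energy-minimal map. First, by Lemma~\ref{pali} and the conformal invariance of the energy and of the class of deformations (Lemma~\ref{KM}), I may assume $\Omega^*=A(\omega)$ and study $\Phi(\tau):=\EE^\rho(\tau,\omega)$ on $(0,\omega)$. For each $\tau$ I fix an energy-minimal deformation $h_\tau$, which by Theorem~\ref{q4} (via Proposition~\ref{gtheory}) is a $\rho$-harmonic diffeomorphism of $A(\tau)$ onto $A(\omega)$, and write $P(\tau)=\E^\rho_N[h_\tau]$, $Q(\tau)=\E^\rho_T[h_\tau]$, so that $\Phi=P+Q$. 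Recall from \S\ref{monsec} that $\Phi$ is locally Lipschitz (Corollary~\ref{econt}); the power-stretch competitor of Lemma~\ref{majorant} produces, at each base point $\tau_\circ$, the convex majorant $\tau\mapsto\frac{\tau_\circ}{\tau}P(\tau_\circ)+\frac{\tau}{\tau_\circ}Q(\tau_\circ)$ touching $\Phi$ at $\tau_\circ$, whence at every point of differentiability $\Phi'(\tau)=\frac{Q(\tau)-P(\tau)}{\tau}=-8\pi c(\tau)$, with $c(\tau)>0$ for $\tau<\omega$ by Corollary~\ref{cpositive}.

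Second, I would turn the problem into a monotonicity statement. Integrating the Hopf identity~\eqref{hopf2a}, namely $\rho^2(h_\tau)(|(h_\tau)_N|^2-|(h_\tau)_T|^2)=4c(\tau)/|z|^2$, over $A(\tau)$ and using $\int_{A(\tau)}|z|^{-2}\,dz=2\pi\tau$, gives the clean identity
\[
P(\tau)-Q(\tau)=8\pi\,c(\tau)\,\tau ,
\]
so that $P(\tau)=\tfrac12\big(\Phi(\tau)-\tau\Phi'(\tau)\big)$. Since $\Phi$ is Lipschitz with $\Phi'=-8\pi c$ almost everywhere, $\Phi$ is convex (resp. strictly convex) on $(0,\omega)$ precisely when $c(\tau)$ is nonincreasing (resp. strictly decreasing) there; equivalently, when the normal energy $P(\tau)$ is nonincreasing (resp. strictly decreasing). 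Thus the whole theorem reduces to the single assertion that \emph{$c(\tau)$ is strictly decreasing on $(0,\omega)$}.

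Finally, the crux — and the step I expect to be the main obstacle — is exactly this monotonicity of $c$. The purely ``soft'' information extractable from the stretch majorant is only the pair of inequalities ``$\tau P(\tau)$ nondecreasing'' and ``$Q(\tau)/\tau$ nonincreasing'' (obtained by applying Lemma~\ref{majorant} in both directions between two levels $\tau_1<\tau_2$); these are consistent with, but do not imply, $c'\le0$, so they cannot by themselves sign the convexity. The genuine input must come from the Reich--Walczak inequalities of \S\ref{reisec}, which, unlike the stretch bound, see the moduli quantitatively: for the minimizer $h_\tau$ one has $\int_{A(\tau)}K_N^{h_\tau}|z|^{-2}\,dz\ge2\pi\omega$ and $\int_{A(\tau)}K_T^{h_\tau}|z|^{-2}\,dz\ge2\pi\tau^2/\omega$ (Propositions~\ref{rwrho} and~\ref{rwthe}), together with the minimizer's pointwise relations $J_{h_\tau}=|(h_\tau)_N|\,|(h_\tau)_T|$ and $K_N^{h_\tau}K_T^{h_\tau}\equiv1$ from Lemma~\ref{ctheory}. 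Following the scheme of \cite{inve}, I would combine these modulus-sensitive lower bounds with the first-order identity above to exclude $c'(\tau)\ge0$ and obtain the strict inequality, the hypothesis $\tau<\omega$ (hence $c>0$) being precisely what makes the tangential distortion the ``small'' one and drives the strictness. Pinning $\Phi$ between the stretch upper bound and the Reich--Walczak lower bound is where the real work lies.
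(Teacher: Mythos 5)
Your reduction is correct as far as it goes: the identity $P(\tau)-Q(\tau)=8\pi c(\tau)\tau$ obtained by integrating \eqref{hopf2a}, the formula $\Phi'(\tau)=-8\pi c(\tau)$, and the observation that strict convexity of the locally Lipschitz function $\Phi$ is equivalent to strict monotonicity of $c$ are all sound, and you correctly diagnose that the power-stretch majorant of Lemma~\ref{majorant} alone cannot sign the second derivative. But the proof stops exactly at the crux: you never establish that $c(\tau)$ is strictly decreasing, you only announce that you ``would combine'' the Reich--Walczak bounds with the first-order identity to ``exclude $c'(\tau)\ge 0$.'' Moreover, the combination you sketch is aimed at the wrong object. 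The inequalities $\int K_N^{h_\tau}|z|^{-2}\ge 2\pi\omega$, $\int K_T^{h_\tau}|z|^{-2}\ge 2\pi\tau^2/\omega$ together with $K_N^{h_\tau}K_T^{h_\tau}\equiv 1$ are all statements about the single minimizer $h_\tau$ at one fixed modulus; they give no comparison between $c(\tau_1)$ and $c(\tau_2)$ for $\tau_1\neq\tau_2$, so they cannot by themselves produce monotonicity of $c$.

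The missing ingredient is the two-map comparison of Proposition~\ref{nee} (derived from Proposition~\ref{needed}): for the minimizer $h$ on $A(\tau_\circ)$ with Hopf constant $c=c(\tau_\circ)$ and \emph{any} diffeomorphism $g\colon A(\tau)\to\Omega^\ast$ one has $\E^\rho[g]-\E^\rho[h]\ge 8\pi c(\tau_\circ)(\tau_\circ-\tau)$, with equality only if $\tau=\tau_\circ$ and $g^{-1}\circ h$ is conformal. Its proof is where Proposition~\ref{rwthe} actually enters, applied not to $h_\tau$ but to the transition map $f=h^{-1}\circ g$ (or $(h^\circ)^{-1}\circ h$), after rewriting the energy difference via the change of variables and the Hopf differential as in \eqref{cchain}. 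Once this supporting-line inequality is in hand, the theorem is immediate: taking the infimum over $g$ shows that at every $\tau_\circ\in(0,\omega)$ the graph of $\Phi$ admits an affine minorant of slope $-8\pi c(\tau_\circ)$ touching it only at $\tau_\circ$, which forces strict convexity (and, by the two-sided comparison of Corollary~\ref{newa}, the strict decrease of $c$ that you wanted). This is the route the paper takes, following \cite[Theorem~10.1]{inve}; without Proposition~\ref{nee} or an equivalent cross-modulus estimate, your argument does not close.
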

Theorem~\ref{convextheorem}  is an extension of corresponding
\cite[Theorem~10.1]{inve}, which is, according to Lemma~\ref{pali} a
special case of Theorem~\ref{convextheorem} for $\rho$ being a
modulus of an analytic function. Before proving we establish some
propositions which we believe can be of interest in its own right.
On the other hand the item (2) of Proposition~\ref{needed}
establishes the uniqueness part of Theorem~\ref{mainexist}.
\begin{proposition}\label{needed}
Let $\omega>0$. Suppose that $h\in \D^\rho(A(\tau),A(\omega))$ and
${h^\circ}\in \D^\rho(A(\tau_\circ),A(\omega))$ are energy-minimal
deformations. In particular, by Lemma~\ref{ctheory},
\begin{equation}\label{hopf11}
\rho^2(h(z))h_z\overline{h_{\bar z}} \equiv \frac{c}{z^2}\qquad
\text{in }A(\tau)
\end{equation}
and
\begin{equation}\label{hopf12}
\rho^2({h^\circ}(z))h^\circ_z\overline{h^\circ_{\bar z}} \equiv
\frac{c_\circ}{z^2}\qquad \text{in }A(\tau_\circ).
\end{equation}
Let $G^\circ=(h^\circ)^{-1}(A(\omega))$ and $G = h^{-1}(A(\omega))$.
Then
\begin{equation}\label{claim10}
{\E^\rho}[{h^\circ}]-{\E^\rho}[h]   \ge
4|c_\circ|\int_{A(\tau_\circ)\setminus G^\circ}\frac{dz}{|z|^2}+8\pi
c (\tau-\Mod(G^\circ))\end{equation} and
\begin{equation}\label{claim11} {\E^\rho}[h]-{\E^\rho}[{h^\circ}]   \ge4|c|\int_{A(\tau)\setminus G}\frac{dz}{|z|^2}+ 8\pi
c_\circ (\tau_\circ-\Mod(G)).\end{equation} Moreover:
\\
(1) Equalities hold in both \eqref{claim10} and \eqref{claim11} if
and only if $G^\circ$ and $G$ are circular annuli and
${(h^\circ)}^{-1}\circ h $ is a conformal mapping between $G$ and
$G^\circ$.
\\
(2) If ${h^\circ}$ and $h$ are minimizers of $\E^\rho$ over the
domain $A(\tau)$, then
$$h\big|_{G^\circ}={h^\circ}\big|_{G}\circ f,$$ where $f\colon G \onto
G^\circ$ is a conformal mapping and
$$\int_{G}\frac{dz}{|z|^2}=\int_{G^\circ}\frac{dz}{|z|^2}.$$ In particular
$\Mod(G)=\Mod(G^\circ)$.
\end{proposition}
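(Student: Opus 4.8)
My plan is to peel off the portion of the energy that is forced by the Hopf differential and then compare the two harmonic diffeomorphisms on their good sets by a competitor built from the radial stretch. \emph{Reduction.} By Lemma~\ref{ctheory}, on $A(\tau)$ the relations \eqref{hopf2a} and \eqref{hopf5} give $\rho^2|h_N|^2=\frac12 e(h)+\frac{2c}{|z|^2}$ and $\rho^2|h_T|^2=\frac12 e(h)-\frac{2c}{|z|^2}$, where $e(h)=\rho^2(|h_N|^2+|h_T|^2)$, whence $e(h)=2\sqrt{\rho^4J_h^2+4c^2|z|^{-4}}$. On $A(\tau)\setminus G$ the Jacobian vanishes (Lemma~\ref{multilemma}), so there $e(h)\equiv 4|c|/|z|^2$ and
\[\E^\rho[h]=\int_G e(h)+4|c|\int_{A(\tau)\setminus G}\frac{dz}{|z|^2},\]
and symmetrically for $h^\circ$. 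Inserting these into \eqref{claim10}--\eqref{claim11} cancels the boundary integrals, so it suffices to prove
\[\int_{G^\circ}e(h^\circ)\ \ge\ \E^\rho[h]+8\pi c\,(\tau-\Mod G^\circ)\]
together with its mirror image obtained by interchanging $(h,\tau,c,G)$ and $(h^\circ,\tau_\circ,c_\circ,G^\circ)$.

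\emph{Competitor.} Write $\beta=\Mod G^\circ$. Since $h^\circ|_{G^\circ}\colon G^\circ\onto A(\omega)$ is a $\rho$-harmonic diffeomorphism (Proposition~\ref{gtheory}), I straighten it by a conformal map $A(\beta)\onto G^\circ$, which preserves the total energy by Lemma~\ref{pali}, and then precompose with the power stretch \eqref{stret0} carrying $A(\tau)$ onto $A(\beta)$. The result lies in $\D^\rho(A(\tau),A(\omega))$, so minimality of $h$ and the transformation rule \eqref{stret} give
\[\E^\rho[h]\le\frac{\beta}{\tau}\,\E^\rho_N[w]+\frac{\tau}{\beta}\,\E^\rho_T[w],\]
where $w$ is the straightened map and $\E^\rho_N[w]+\E^\rho_T[w]=\int_{G^\circ}e(h^\circ)$. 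Hence
\[\int_{G^\circ}e(h^\circ)-\E^\rho[h]\ge(\tau-\beta)\Big(\frac{\E^\rho_N[w]}{\tau}-\frac{\E^\rho_T[w]}{\beta}\Big).\]
The slope is identified through the Hopf identity $\E^\rho_N-\E^\rho_T=4c\int|z|^{-2}$, the integrated form of \eqref{hopf2a}, exactly as in Proposition~\ref{q3}; the discrepancy between $\int_{G^\circ}|z|^{-2}$ and $2\pi\beta$ is governed by the extremal-length inequality $\int_D dz/|z|^2\ge 2\pi\Mod D$ for a doubly connected $D$ separating $0$ and $\infty$ (equality only for round annuli centred at $0$, since writing $D=\Phi(A(\Mod D))$ the degree forces the $\zeta^{-1}$ coefficient of $\Phi'/\Phi$ to equal $1$). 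As $c>0$ by Corollary~\ref{cpositive}, these assemble into $8\pi c(\tau-\Mod G^\circ)$, and the mirror estimate is identical with the roles reversed.

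\emph{Equality and uniqueness.} Exactly two estimates are spent. Equality in the extremal-length step forces $G^\circ$ (respectively $G$) to be a round annulus centred at $0$; equality in the minimality step forces the competitor to be itself energy-minimal, hence $\rho$-harmonic with Hopf differential proportional to $1/z^2$, which unwinds to the statement that the straightening-and-stretch carries $h^\circ$ to $h$ up to a conformal change, i.e.\ $(h^\circ)^{-1}\circ h\colon G\onto G^\circ$ is conformal; this is assertion~(1). For~(2), when $h$ and $h^\circ$ minimize over the \emph{same} $A(\tau)$ we have $\tau=\tau_\circ$ and $c=c_\circ$, since by Proposition~\ref{q3} the constant depends only on $\tau,\omega,\rho$. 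Adding \eqref{claim10} and \eqref{claim11} then yields $0$ on the left and a sum of manifestly nonnegative terms on the right (each $\Mod G,\Mod G^\circ\le\tau$ and $c=c_\circ>0$). Every inequality is therefore an equality, so~(1) furnishes a conformal $f\colon G\onto G^\circ$ with $\int_G|z|^{-2}=\int_{G^\circ}|z|^{-2}$, hence $\Mod G=\Mod G^\circ$, and $h=h^\circ\circ f$ on the good set.

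\emph{Main obstacle.} The delicate point, where the Euclidean argument of \cite{inve} does not transcribe verbatim, is closing the reduced inequality: its left-hand difference is governed by the Hopf constant $c_\circ$ of $h^\circ$ through $w$, whereas the target slope involves the constant $c$ of $h$, so the two constants must be reconciled. For $\rho\equiv1$ one sidesteps this via the affine invariance of Euclidean harmonicity, which is unavailable here; I expect instead to need the Reich--Walczak-type lower bounds of Propositions~\ref{rwrho} and~\ref{rwthe}, controlling the normal and tangential energies by the conformal moduli, to absorb the difference, after which the extremal-length defect accounts precisely for the $\Mod G^\circ$ correction. Matching this bookkeeping to the stated coefficients while simultaneously tracking the boundary contributions on $A\setminus G$ is the crux of the argument.
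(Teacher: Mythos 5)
Your reduction step is sound and agrees with the paper: on $A(\tau)\setminus G$ one has $J_h=0$ a.e., so \eqref{hopf11} forces $\rho^2|Dh|^2=4|c|/|z|^2$ there, and the term $4|c_\circ|\int_{A(\tau_\circ)\setminus G^\circ}|z|^{-2}$ cancels as you say, leaving the task of proving $\int_{G^\circ}e(h^\circ)-\E^\rho[h]\ge 8\pi c\,(\tau-\Mod G^\circ)$. The gap is in how you try to close this. Your competitor (conformal straightening of $h^\circ|_{G^\circ}$ followed by the power stretch) yields $\int_{G^\circ}e(h^\circ)-\E^\rho[h]\ge(\tau-\beta)\bigl(\E^\rho_N[w]/\tau-\E^\rho_T[w]/\beta\bigr)$ with $\beta=\Mod G^\circ$, but the slope on the right is a quantity determined entirely by $h^\circ$ and $G^\circ$, while the target coefficient $8\pi c$ is the Hopf constant of $h$. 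Since the claimed inequality must hold for either sign of $\tau-\beta$, your route would require the slope to \emph{equal} $8\pi c$ exactly, which is not derivable. Worse, the ``integrated Hopf identity'' you invoke for $w$ is false in general: after a conformal straightening $\Phi\colon A(\beta)\onto G^\circ$ the Hopf differential of $w=h^\circ\circ\Phi$ is $c_\circ\,\Phi'(\zeta)^2/\Phi(\zeta)^2$, which has the form $\mathrm{const}/\zeta^2$ only when $G^\circ$ is already a round annulus centred at $0$ --- precisely part of what statement (1) asks you to prove. You flag this reconciliation of $c$ and $c_\circ$ as the ``main obstacle'' and gesture at Propositions~\ref{rwrho} and~\ref{rwthe}, but you never supply an object to which they can be applied.

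The paper's proof supplies exactly that object: the composition $f=(h^\circ)^{-1}\circ h\colon A(\tau)\to G^\circ$, which restricts to a diffeomorphism $G\onto G^\circ$ by Proposition~\ref{gtheory}. Changing variables $w=f(z)$ in $\E^\rho[h^\circ]$, using $|h_z|^2+|h_{\bar z}|^2\ge 2|h_zh_{\bar z}|$ together with \eqref{hopf11}, one arrives at $4|c|\int_G\bigl[|f_z-\sigma f_{\bar z}|^2/J_f-1\bigr]\,dz/|z|^2$ with $\sigma=c\bar z/(|c|z)$; this is $4|c|\int K_T^f\,dz/|z|^2$ minus $8\pi|c|\tau$ when $c>0$ (resp.\ $K_N^f$ when $c<0$), and Proposition~\ref{rwthe} (resp.\ \ref{rwrho}) converts it into $2\pi\tau^2/\Mod G^\circ$ (resp.\ $2\pi\Mod G^\circ$), producing the coefficient $8\pi c$ attached to the constant of $h$ with the correct sign in both regimes. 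The equality analysis then reads off $f_{\bar z}\equiv 0$ and invokes Lemma~\ref{lemo} to identify $G,G^\circ$ as circular annuli. Your part (2) bookkeeping (adding \eqref{claim10} and \eqref{claim11} when $\tau=\tau_\circ$, $c=c_\circ$) is fine in outline, but it rests on the main inequalities, which your argument does not establish. Note also that you assume $c>0$ via Corollary~\ref{cpositive} midway through, which requires $\tau<\omega$; the proposition as stated does not assume this, and the two signs of $c$ genuinely need the two different Reich--Walczak inequalities.
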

\begin{proof} We will only sketch the proof of \eqref{hopf11} (\eqref{hopf12}), since the corresponding proof of \cite[Proposition~10.2]{inve} applies for
our proof.
 First we consider the easy case $c=0$. In this case $h$ is conformal, which implies
${\E^\rho}[h]=2\mathcal{A}(\rho)$. On the other hand,
${\E^\rho}[{h^\circ}]\ge 2\mathcal{A}(\rho)$ with equality if and
only if ${h^\circ}$ is conformal, see~\eqref{ener2}.

Let $G:=\{z\in A(\tau)\colon h(z)\in A(\omega)\}$ and
$G^\circ:=\{z\in A(\tau_\circ)\colon {h^\circ}(z)\in A(\omega)\}$.
Then, by Proposition~\ref{gtheory} $G$ and $G^\circ$ are doubly
connected domains that separates the boundary components of
$A(\tau)$ and $A(\tau_\circ)$ respectively. Moreover \[{h^\circ}
\colon G^\circ\onto A(\omega)\text{\, and \, }h \colon G \onto
A(\omega).\] It remains to deal with $c\ne 0$. The composition
\[
f= (h^\circ)^{-1} \circ h \colon A(\tau) \onto G^\circ
\]
lies in $W^{1,2}_{\rm loc} (A(\tau))$ and is not homotopic to a
constant mapping.  Moreover, the restriction of $f$ to the domain
$G$ is a diffeomorphism onto $G^\circ$, by virtue of
Proposition~\ref{gtheory}. Thus, $f$ possesses a right inverse
$f^{-1}\colon G^\circ\onto G$ which is also a diffeomorphism. Now we
estimate ${\E^\rho}[{h^\circ}]-{\E^\rho}[h]$ by using the change of
variables $w=f(z)$ and proceeding similarly as in
\cite[Proposition~10.2]{inve}, but using this time our improved
Proposition~\ref{rwthe} (b) and Lemma~\ref{multilemma}. Let us only
state one of key sequences of relations for the proof.
\begin{align}
{\E^\rho}[{h^\circ}]&-4|c_\circ|\int_{A(\tau_\circ)\setminus
G^\circ}\frac{dz}{|z|^2}-\int_{G} \rho^2(h(z)) \abs{Dh}^2\nonumber
\\&\label{cchain}= 4\int_{G}
 \rho^2(h(z)) \frac{\left(\abs{h_z}^2+\abs{h_{\bar z}}^2\right)\, \abs{f_{\bar
z}}^2
-2\re \left[h_z \overline{h_{\bar z}} \overline{f_z}f_{\bar z}\right]}{J_f}\, d z  \\
&  \ge 4 \int_{G}  \rho^2(h(z)) \frac{ 2\abs{h_z h_{\bar z}}\,
\abs{f_{\bar z}}^2
-2\re \left[h_z\overline{h_{\bar z}} \overline{f_z}f_{\bar z}\right]}{J_f}\, d z \nonumber \\
& = 4\abs{c} \int_{G} \left[\frac{\abs{f_z-\sigma f_{\bar
z}}^2}{J_f} -1 \right] \, \frac{dz}{\abs{z}^2}, \quad \mbox{where }
\sigma=\sigma(z)= \frac{c\bar z}{\abs{c}z}. \nonumber
\end{align} \emph{Proof
of statement (1)}.  Since $h$ is a sense-preserving diffeomorphism
in $G^\circ$, we have $\abs{h_z}>\abs{h_{\bar z}}$ everywhere in
$G^\circ$ . If equality holds in~\eqref{claim10}, then it also holds
in~\eqref{cchain}. The latter is only possible if $f_{\bar z}\equiv
0$ in $G$. Thus $f\colon G\onto G^\circ $ is a conformal mapping.
This implies $\Mod(G)=\Mod(G^\circ)$. Moreover, by adding
\eqref{claim10} and \eqref{claim11} we have
$$\int_{G^{\circ}}\frac{dz}{|z|^2}=2\pi \Mod(G^{\circ})$$ and $$\int_{G}\frac{dz}{|z|^2}=2\pi
\Mod(G).$$ To continue we need the following lemma
\begin{lemma}\label{lemo}For a doubly connected domain $G$ separating $0$ and $\infty$ we have the
inequality \begin{equation}\label{qeq}\int_{G}\frac{dz}{|z|^2}\ge
2\pi \Mod(G).\end{equation} The equality is attained if and only if
$G$ is a circular annulus.
\end{lemma}
\begin{proof}[Proof of Lemma~\ref{lemo}] The inequality statement can be deduced for example from \cite[Proposition~5.1]{inve}.
In order to prove the equality statement we should include an
inequalities of \cite[Proposition~5.1]{inve} and study it more
closely. One of inequalities in \cite[Proposition~5.1]{inve}
implying \eqref{qeq} is
\begin{equation}\label{konko}
\begin{split}
\left( \int_G \frac{\abs{a'}}{\abs{a}}\, \frac{dz}{\abs{z}}
\right)^2 &\le \int_G \frac{|a'|^2}{\abs{a}^2} \; \int_G \frac{d
z}{\abs{z}^2},
\end{split}
\end{equation}
where $a$ is a conformal mapping of $G$ onto an annulus
$A(r_\ast,1)$. 
The equality is attained in Cauchy-Schwarz inequality \eqref{konko}
if and only if $$\frac{|a'|}{|a|}=\frac{\alpha}{|z|}.$$ Thus
$a(z)=\beta z$ for some constant $\beta\neq 0$, implying that $G$ is
a circular annulus.
\end{proof}
From Lemma~\ref{lemo} we deduce that $G$ and $G^\circ$ are circular
annuli.

\emph{Proof of statement (2)}. Assume that ${h^\circ}\in
\D^\rho(A(\tau_\circ),A(\omega))$ and $h\in
\D^\rho(A(\tau_\circ),A(\omega))$ are the minimizers of the energy
$\E^\rho$. Then $\E^\rho[{h^\circ}]=\E^\rho[h]$. Since $c$ depends
only on $\tau$, it
follows that $c_\circ=c$. 
Let $G$ and $G^\circ$ and $f$ be defined as above. Without loos of
generality we can assume that
\begin{equation}\label{wlg}\int_G\frac{dz}{|z|^2}\ge
\int_{G^\circ}\frac{dz}{|z|^2}.\end{equation} From \eqref{cchain},
we have
\begin{align}
{\E^\rho}[{h^\circ}]&-\E^\rho[h]-4|c|\int_{A(\tau_\circ)\setminus
G^\circ}\frac{dz}{|z|^2}+4|c|\int_{A(\tau_\circ)\setminus
G}\frac{dz}{|z|^2}\nonumber
\\&= 4\int_{G}
 \rho^2(h(z)) \frac{\left(\abs{h_z}^2+\abs{h_{\bar z}}^2\right)\, \abs{f_{\bar
z}}^2 -2\re \left[h_z \overline{h_{\bar z}} \overline{f_z}f_{\bar
z}\right]}{J_f}\, d z\nonumber.
\end{align}
Assume that
$$\E^\rho[{h^\circ}]=\E^\rho[h].$$  Then \begin{equation}\label{qe}\begin{split}4\int_{G}
 \rho^2(h(z)) &\frac{\left(\abs{h_z}^2+\abs{h_{\bar z}}^2\right)\, \abs{f_{\bar
z}}^2 -2\re \left[h_z \overline{h_{\bar z}} \overline{f_z}f_{\bar
z}\right]}{J_f}\\&+8|c|\pi\left(\int_G\frac{dz}{|z|^2}-
\int_{G^\circ}\frac{dz}{|z|^2}\right)=0.\end{split}\end{equation}
This implies that $f_{\bar z}\equiv 0$, i.e. $f$ is a conformal
mapping and therefore $\Mod(G)=\Mod(G^\circ)$ and
$$\int_G\frac{dz}{|z|^2}- \int_{G^\circ}\frac{dz}{|z|^2}=0.$$ This
finishes the proof of Proposition~\ref{needed}.
\end{proof}
By following the proof of Proposition~\ref{needed} we obtain the
following useful variation of \cite[Proposition~10.2]{inve}. 
\begin{proposition}\label{nee}
Let $\Omega^\ast$ be a doubly connected domain. Suppose that $h\in
\D^\rho(A(\tau_\circ),\Omega^\ast)$ is a diffeomorphic
deformation with Hopf differential $c dz^2/z^2$. 
Then for any diffeomorphism $g\colon A(\tau) \to \Omega^\ast$ we
have
\begin{equation}\label{claim100}
\E^\rho[g]-\E^\rho[h] \ge  8\pi c (\tau_\circ-\tau).
\end{equation}
The equality holds in~\eqref{claim100} if and only if
$\tau=\tau_\circ$ and $g^{-1} \circ h$ is a conformal mapping of
$A(\tau_\circ)$ onto itself.
\end{proposition}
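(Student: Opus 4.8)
The plan is to reproduce the chain \eqref{cchain} from the proof of Proposition~\ref{needed}, but in the cleaner situation where $h$ is an honest diffeomorphism of $A(\tau_\circ)$ onto \emph{all} of $\Omega^\ast$, so that no boundary-collapse correction terms appear. First I would set $f=g^{-1}\circ h\colon A(\tau_\circ)\onto A(\tau)$; since $h$ and $g$ are sense-preserving diffeomorphisms onto $\Omega^\ast$, the map $f$ is a sense-preserving diffeomorphism between the two circular annuli and $h=g\circ f$. Relating the derivatives of $h$ to those of $g$ and $f$ (the associated linear system has determinant $J_f$) and changing variables $\zeta=f(z)$ in $\E^\rho[g]=\int_{A(\tau)}\rho^2(g)\abs{Dg}^2$, I would obtain
\[
\E^\rho[g]=\int_{A(\tau_\circ)}\frac{2\rho^2(h)}{J_f}\left[(\abs{f_z}^2+\abs{f_{\bar z}}^2)(\abs{h_z}^2+\abs{h_{\bar z}}^2)-4\re\left(\overline{f_z}f_{\bar z}\,h_z\overline{h_{\bar z}}\right)\right]dz,
\]
which is exactly the computation behind \eqref{cchain}. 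Subtracting $\E^\rho[h]=\int_{A(\tau_\circ)}2\rho^2(h)(\abs{h_z}^2+\abs{h_{\bar z}}^2)$ and using $\abs{f_z}^2+\abs{f_{\bar z}}^2-J_f=2\abs{f_{\bar z}}^2$ gives
\[
\E^\rho[g]-\E^\rho[h]=\int_{A(\tau_\circ)}\frac{4\rho^2(h)}{J_f}\left[\abs{f_{\bar z}}^2(\abs{h_z}^2+\abs{h_{\bar z}}^2)-2\re\left(\overline{f_z}f_{\bar z}\,h_z\overline{h_{\bar z}}\right)\right]dz.
\]

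Next I would insert the Hopf identity $\rho^2(h)h_z\overline{h_{\bar z}}\equiv c/z^2$ together with $\abs{h_z}^2+\abs{h_{\bar z}}^2\ge 2\abs{h_zh_{\bar z}}$, so that $\rho^2(h)(\abs{h_z}^2+\abs{h_{\bar z}}^2)\ge 2\abs{c}/\abs{z}^2$. As in \eqref{cchain}, this turns the integrand into $4\abs{c}\bigl(\abs{f_z-\sigma f_{\bar z}}^2/J_f-1\bigr)\abs{z}^{-2}$ with $\sigma=c\bar z/(\abs{c}z)$; since $\int_{A(\tau_\circ)}dz/\abs{z}^2=2\pi\tau_\circ$ this yields
\[
\E^\rho[g]-\E^\rho[h]\ge 4\abs{c}\int_{A(\tau_\circ)}\frac{\abs{f_z-\sigma f_{\bar z}}^2}{J_f}\,\frac{dz}{\abs{z}^2}-8\pi\abs{c}\tau_\circ.
\]
The sign of $c$ now selects the relevant distortion: for $c>0$ one has $\abs{f_z-\sigma f_{\bar z}}^2/J_f=K_T^f$ and Proposition~\ref{rwthe}(a) applied to $f\colon A(\tau_\circ)\onto A(\tau)$ gives $\int_{A(\tau_\circ)}K_T^f\,dz/\abs{z}^2\ge 2\pi\tau_\circ^2/\tau$, whereas for $c<0$ one has $\abs{f_z-\sigma f_{\bar z}}^2/J_f=K_N^f$ and Proposition~\ref{rwrho}(a) gives $\int_{A(\tau_\circ)}K_N^f\,dz/\abs{z}^2\ge 2\pi\tau$. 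Substituting, the case $c<0$ produces $\E^\rho[g]-\E^\rho[h]\ge 8\pi\abs{c}(\tau-\tau_\circ)=8\pi c(\tau_\circ-\tau)$ directly, while the case $c>0$ gives $\E^\rho[g]-\E^\rho[h]\ge 8\pi c\tau_\circ(\tau_\circ-\tau)/\tau$; because $8\pi c\tau_\circ(\tau_\circ-\tau)/\tau-8\pi c(\tau_\circ-\tau)=8\pi c(\tau_\circ-\tau)^2/\tau\ge 0$, this again yields \eqref{claim100}. The degenerate case $c=0$, where $h$ is conformal and $\Mod\Omega^\ast=\tau_\circ$, is immediate from \eqref{ener2}.

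For the equality clause I would run back through the chain. The only step that can be an equality is the pointwise bound $\abs{f_{\bar z}}^2(\abs{h_z}^2+\abs{h_{\bar z}}^2)\ge 2\abs{f_{\bar z}}^2\abs{h_zh_{\bar z}}$; as $h$ is a sense-preserving diffeomorphism we have $\abs{h_z}>\abs{h_{\bar z}}$ throughout, so equality forces $f_{\bar z}\equiv 0$, i.e.\ $f=g^{-1}\circ h$ is conformal. A conformal diffeomorphism $A(\tau_\circ)\onto A(\tau)$ forces $\tau=\tau_\circ$, and then both sides of \eqref{claim100} vanish; the converse is immediate. I expect the main obstacles to be purely organisational: the bookkeeping in the change of variables and the composition formula, and keeping straight the sign convention tying $c$ to the choice between $K_T^f$ and $K_N^f$. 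The one genuinely extra estimate, absent from Proposition~\ref{needed}, is the elementary inequality $8\pi c\tau_\circ(\tau_\circ-\tau)/\tau\ge 8\pi c(\tau_\circ-\tau)$ used to pass from the Reich-Walczak bound to the clean form \eqref{claim100} when $c>0$.
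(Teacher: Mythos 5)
Your proposal is correct and follows essentially the same route the paper intends: it reproduces the chain behind \eqref{cchain} from Proposition~\ref{needed} for the composition $f=g^{-1}\circ h$, inserts the Hopf identity to reduce to $K_T^f$ or $K_N^f$ according to the sign of $c$, and invokes Propositions~\ref{rwthe} and~\ref{rwrho}, which is exactly what the paper means by ``following the proof of Proposition~\ref{needed}'' (here with no boundary-collapse correction terms since $h$ is a genuine diffeomorphism onto $\Omega^\ast$). The only remarks are cosmetic: for a diffeomorphism $f$ you should cite part (b) rather than (a) of Propositions~\ref{rwrho} and~\ref{rwthe}, and in the equality discussion the point is that equality in \eqref{claim100} forces equality in \emph{every} intermediate inequality, in particular the pointwise one, which then yields $f_{\bar z}\equiv 0$.
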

Now we can prove the following important corollary
\begin{corollary}\label{iva} If $h\in \D^\rho(A(\tau_\circ),\Omega^\ast)$ is a diffeomorphic
deformation with Hopf differential $c dz^2/z^2$ then $h$ is a
minimizer (under the class of deformation) and it is unique up to
reparametrization by a conformal mapping. In particular the
restriction of every such deformation in a circular annulus
$A'\subset A(\tau_\circ)$ is a minimizer.\end{corollary}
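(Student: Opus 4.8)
The plan is to deduce everything from Proposition~\ref{nee}, whose comparison inequality is tailored to exactly this situation. First I would record that a diffeomorphic deformation $h$ whose Hopf differential equals $c\,dz^2/z^2$ is $\rho$-harmonic and stationary: the quadratic differential $c\,dz^2/z^2$ is holomorphic in $A(\tau_\circ)$ and, writing $z=pe^{it}$ on each boundary circle, $z^2\varphi(z)\equiv c\in\R$ is real, so by \eqref{anal} and the discussion in \S\ref{hopsec} the map $h$ is harmonic with the boundary-normalized Hopf differential of a stationary deformation. Consequently Corollary~\ref{cpositive} applies to $h$ and fixes the sign of $c$ according to whether $\tau_\circ$ is smaller or larger than $\Mod\Omega^*$.

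Next I would show that $h$ beats every competing \emph{diffeomorphism} of the same annulus. Applying Proposition~\ref{nee} with $\tau=\tau_\circ$ gives, for every diffeomorphism $g\colon A(\tau_\circ)\to\Omega^*$,
\[
\E^\rho[g]-\E^\rho[h]\ge 8\pi c(\tau_\circ-\tau_\circ)=0,
\]
with equality precisely when $g^{-1}\circ h$ is a conformal automorphism of $A(\tau_\circ)$. Thus $h$ has least energy among diffeomorphisms $A(\tau_\circ)\onto\Omega^*$ and is unique among them up to conformal reparametrization; this already yields the uniqueness assertion once minimality over the whole class of deformations is in place, since in the relevant range the minimizers reparametrize to full diffeomorphisms of $A(\tau_\circ)$ and hence differ from $h$ by the equality case above.

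To pass from diffeomorphisms to all deformations I would bring in a genuine minimizer. By Corollary~\ref{attain} there is $h_*\in\D^\rho(A(\tau_\circ),\Omega^*)$ with $\E^\rho[h_*]=\EE^\rho(A(\tau_\circ),\Omega^*)$, and by Proposition~\ref{gtheory} its restriction to $G_*=h_*^{-1}(\Omega^*)$ is a harmonic diffeomorphism onto $\Omega^*$, with $\mu:=\Mod G_*\le\tau_\circ$ because $G_*$ separates the boundary components of $A(\tau_\circ)$ (\cite[Lemma~6.3]{LVb}). Reparametrizing $G_*$ conformally onto $A(\mu)$ produces a diffeomorphism $\tilde h_*\colon A(\mu)\onto\Omega^*$ with $\E^\rho[\tilde h_*]\le\E^\rho[h_*]$ by Lemma~\ref{pali}(a) (the energy over $A(\tau_\circ)\setminus G_*$ being nonnegative), and Proposition~\ref{nee} applied to $h$ and $\tilde h_*$ gives
\[
\E^\rho[\tilde h_*]-\E^\rho[h]\ge 8\pi c(\tau_\circ-\mu).
\]
In the range $\tau_\circ\le\Mod\Omega^*$ one has $c\ge0$ and $\tau_\circ-\mu\ge0$, so the right-hand side is nonnegative and $\EE^\rho=\E^\rho[h_*]\ge\E^\rho[\tilde h_*]\ge\E^\rho[h]\ge\EE^\rho$; hence $h$ is itself a minimizer (and $\mu=\tau_\circ$, so the minimizer does not collapse). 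The delicate point is exactly this last chain: controlling the modulus defect $\tau_\circ-\mu$ together with the sign of $c$ is what makes the argument close, and it is here that the monotonicity/sign information of Corollary~\ref{cpositive} (hence Proposition~\ref{q3}) is indispensable — without it the possible collapse of the minimizer could not be excluded.

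Finally, for the ``in particular'' statement I would take a concentric circular subannulus $A'=A(a,b)\Subset A(\tau_\circ)$ and set $\Omega'=h(A')$, a doubly connected domain with $\overline{\Omega'}\subset\Omega^*$, so that $\rho$ remains allowable on $\Omega'$. The rescaling $w=z/b$ carries $A'$ conformally onto a circular annulus and transforms $c\,dz^2/z^2$ into $c\,dw^2/w^2$, so $h|_{A'}$ is again a diffeomorphic deformation with Hopf differential of the required normal form. Applying the first part of the corollary to $A'$ and $\Omega'$ then shows $h|_{A'}$ is a minimizer, completing the proof.
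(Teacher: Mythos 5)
Your route is genuinely different from the paper's, and it has a gap. The paper's proof is a three-line argument that never needs the existence of a minimizer or the sign of $c$: given an arbitrary deformation $f$ with approximating diffeomorphisms $f_j\cto f$, it applies the comparison inequality of Proposition~\ref{nee} to each $f_j$ \emph{on the same annulus} $A(\tau_\circ)$, i.e. with $\tau=\tau_\circ$, so the right-hand side $8\pi c(\tau_\circ-\tau)$ vanishes identically and $\E^\rho[h]\le\E^\rho[f_j]$ for every $j$ regardless of whether $c$ is positive or negative; it then passes to the limit in $j$ to get $\E^\rho[h]\le\E^\rho[f]$. You instead compare $h$ with a minimizer $h_*$ produced by Corollary~\ref{attain}, restricted to $G_*=h_*^{-1}(\Omega^*)$ and reparametrized onto $A(\mu)$ with $\mu=\Mod G_*\le\tau_\circ$, which forces you to control the product $c\,(\tau_\circ-\mu)$.

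That is where the argument breaks. Your chain $\EE^\rho=\E^\rho[h_*]\ge\E^\rho[\tilde h_*]\ge\E^\rho[h]$ needs $8\pi c(\tau_\circ-\mu)\ge 0$, and you obtain this only ``in the range $\tau_\circ\le\Mod\Omega^*$'' where $c\ge 0$. The corollary as stated carries no such hypothesis: diffeomorphic deformations with Hopf differential $c\,dz^2/z^2$ and $c<0$ do exist (the radial Nitsche maps of \S\ref{K} with $\gamma<0$, when $\tau_\circ>\Mod\Omega^*$), and for them $8\pi c(\tau_\circ-\mu)\le 0$, so your inequality says nothing. The ``in particular'' clause inherits the same defect: for a concentric subannulus $A'$ you have no a priori control on $\Mod A'$ versus $\Mod h(A')$, so you cannot guarantee you are in the favorable sign regime even if the original pair was. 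To repair the proof you should drop the detour through $h_*$ and $G_*$ entirely and do what the paper does: fix an arbitrary competitor $f\in\D^\rho(A(\tau_\circ),\Omega^*)$, use the approximating diffeomorphisms $f_j$ from Definition~\ref{defdef}(4) as the $g$ in \eqref{claim100} with $\tau=\tau_\circ$ (so the modulus-defect term is exactly zero and the sign of $c$ is irrelevant), and then relate $\E^\rho[f]$ to $\lim_j\E^\rho[f_j]$. Your step establishing that $h$ beats every diffeomorphism of $A(\tau_\circ)$, and your reduction of the ``in particular'' statement to the main one via rescaling of the Hopf differential, are fine and coincide with what the paper needs; it is only the passage from diffeomorphisms to general deformations that must be rerouted.
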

\begin{proof}[Proof of Corollary~\ref{iva}] Assume that $f$ is a deformation. Let  $f_j$ be a \emph{approximating sequence} of
diffeomorphisms $f_j\colon \Omega\onto\Omega^*$ such that $f_j\cto
f$ on $\Omega$. Since $\E^\rho$ is weak lower semicontinuous (see
\cite[Lemma~2.1]{SY}) it follows that
$$\underset{j\to\infty}{\lim\,\sup}\,\E^\rho[f_j]\le E^\rho[f].$$ On
the other hand from \eqref{claim10} by taking $\tau_\circ=\tau$ we
have $\E^\rho[h]\le \E^\rho[f_j]$. Therefore $\E^\rho[h]\le
\E^\rho[f]$.
\end{proof}
\begin{proof}[Proof of Theorem~\ref{convextheorem}]
Proof goes along the lines of the proof of Lemma
\cite[Theorem~10.1]{inve} by using Proposition~\ref{nee}. 
\end{proof}
\begin{corollary}\label{newa}
Let $\mathcal{T}=\{\tau: \EE^\rho(\tau,\omega)=\E^\rho[g]\text{ for some
} g\in \Ho^{1,2}(\Omega,\Omega^*)\}$ and $\tau_\diamond=\sup
\mathcal{T}$. Then the function $c=c(\tau)$ is a strictly decreasing
function in $\mathcal{T}$.
\end{corollary}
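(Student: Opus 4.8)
The plan is to compare, for two distinct moduli $\tau_1<\tau_2$ in $\mathcal{T}$, the energy-minimal homeomorphisms at each and to play Proposition~\ref{nee} off against itself in the two directions. First I would observe that for every $\tau\in\mathcal{T}$ the map $h_\tau$ realizing $\EE^\rho(\tau,\omega)=\E^\rho[h_\tau]$ is in fact a harmonic diffeomorphism of $A(\tau)$ onto $A(\omega)$ with Hopf differential of the normalized form $c(\tau)\,dz^2/z^2$. Indeed $h_\tau\in\Ho^{1,2}\subset\D^\rho$ attains the deformation infimum, so Proposition~\ref{gtheory} applies; since $h_\tau$ is a homeomorphism \emph{onto} $A(\omega)$, the set $G=h_\tau^{-1}(A(\omega))$ is all of $A(\tau)$, whence $h_\tau$ is a harmonic diffeomorphism whose Hopf differential is $c(\tau)/z^2$ by Lemma~\ref{ctheory}. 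In particular $h_{\tau_1}$ and $h_{\tau_2}$ are diffeomorphic deformations eligible to serve both as the privileged reference map and as the competitor in Proposition~\ref{nee}. (That the constant $c(\tau)$ is well defined, i.e.\ independent of the chosen minimizer, is already recorded in Proposition~\ref{q3} and Proposition~\ref{needed}(2).)

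Next I would apply Proposition~\ref{nee} twice. Taking $h=h_{\tau_1}$ (so $\tau_\circ=\tau_1$, Hopf constant $c_1=c(\tau_1)$) and competitor $g=h_{\tau_2}$ gives
\[
\EE^\rho(\tau_2,\omega)-\EE^\rho(\tau_1,\omega)\ \ge\ 8\pi c_1(\tau_1-\tau_2).
\]
Swapping roles, with $h=h_{\tau_2}$ (so $\tau_\circ=\tau_2$, $c_2=c(\tau_2)$) and competitor $g=h_{\tau_1}$, gives
\[
\EE^\rho(\tau_1,\omega)-\EE^\rho(\tau_2,\omega)\ \ge\ 8\pi c_2(\tau_2-\tau_1).
\]
Adding the two inequalities cancels the energy terms and yields
\[
0\ \ge\ 8\pi(\tau_2-\tau_1)\bigl(c_2-c_1\bigr),
\]
so $c_2\le c_1$. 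To upgrade this to \emph{strict} monotonicity I would invoke the equality clause of Proposition~\ref{nee}: equality there forces $\tau=\tau_\circ$, which is impossible in either application because $\tau_1\neq\tau_2$. Hence both inequalities are strict, their sum is strict, and $c(\tau_2)<c(\tau_1)$; that is, $c$ is strictly decreasing on $\mathcal{T}$.

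The step I expect to carry the weight—and the one to verify with care—is the first: that membership in $\mathcal{T}$ genuinely delivers a \emph{diffeomorphic} deformation with Hopf differential of the precise form $c(\tau)/z^2$, so that $h_{\tau_i}$ may legitimately occupy the distinguished slot in Proposition~\ref{nee}. Once this is secured, the symmetric double application and the cancellation are routine. I would also emphasize that this argument is valid on the whole of $\mathcal{T}$, including any portion with $\tau\ge\omega$ where the strict convexity of Theorem~\ref{convextheorem} (and hence the monotonicity of $-8\pi c=\tfrac{d}{d\tau}\EE^\rho$ coming from Proposition~\ref{q3}) is unavailable; the Proposition~\ref{nee} comparison sidesteps convexity entirely and therefore reaches past $\tau=\omega$.
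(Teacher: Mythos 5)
Your proposal is correct and follows essentially the same route as the paper: the paper likewise applies Proposition~\ref{nee} twice with the roles of the two energy-minimal diffeomorphisms swapped, adds (equivalently, divides by $\tau_\circ-\tau$) to cancel the energies and obtain $c(\tau_\circ)\le c(\tau)$, and then uses the equality clause (impossible since the two annuli are not conformally equivalent) to get strictness. Your preliminary paragraph justifying that each $\tau\in\mathcal{T}$ yields a harmonic diffeomorphism with Hopf differential $c(\tau)\,dz^2/z^2$ is a useful elaboration of a step the paper simply assumes.
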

\begin{proof}  Suppose that Let $0<\tau_\circ, \tau<\tau_\diamond$ and $\tau_\circ, \tau\in\mathcal{T}$. Assume
that $h\in \D^\rho(A(\tau_\circ),\Omega^\ast)$ and $g\in
\D^\rho(A(\tau),\Omega^\ast)$ are diffeomorphic energy-minimal
deformations. Assume that $\tau<\tau_\circ$. Prove that
$c(\tau_\circ)<c(\tau)$. From \eqref{claim100} we have
$${\E^\rho}[g]-{\E^\rho}[h] \ge 8\pi c(\tau_\circ) (\tau_\circ-\tau)$$ and
$${\E^\rho}[h]-{\E^\rho}[g] \ge 8\pi c(\tau)
(\tau-\tau_\circ).$$ Dividing by $\tau_\circ-\tau$ the previous
inequalities we arrive at inequality $c(\tau_\circ)\le c(\tau)$.
Because $A(\tau_\circ)$ and $A(\tau)$ are not  conformally
equivalent it follows that $c(\tau_\circ)< c(\tau)$  as desired.
\end{proof}
\begin{remark}\label{bela}
Let $\tau_n \in \{\tau: \EE^\rho(\tau,\omega)=\E^\rho[g]\text{ for
some } g\in \Ho^{1,2}(A(\tau),\omega)\}$ be a sequence converging to
$\tau_\diamond$ and let $g_n$ be a sequence of harmonic
diffeomorphisms such that $\EE^\rho(\tau_n,\omega)=\E^\rho[g_n]$.
Then $g_n$, up to some subsequence, converges to a harmonic
diffeomorphism $h_\diamond\in \Ho^{1,2}(A(\tau_\diamond),\omega)$.
It is called the critical $\rho$- Nitsche map and $A(\tau_\diamond)$
is called the critical domain for $\Omega^*$.  The explicit
evaluation of $\tau_\diamond$ is not known for arbitrary $\Omega^*$
i.e. for arbitrary $\rho$. However for circular annuli and radial
metrics (in particluar for Euclidean metric) the constant
$\tau_\diamond$ is calculated exactly (see \cite{AIM} and
\cite{dist}). The strict convexity part of
Theorem~\ref{convextheorem} fails for $\tau>\tau_\diamond$. We
demonstrate this with Theorem~\ref{circ} based on the results
of~\cite{dist}.
\end{remark}
\section{Convexity of  minimal energy for radial metrics}\label{K}
\begin{definition}\label{define} The radial
metric $\rho$ is called a \emph{regular metric} if $$\inf_{\delta<
s< \sigma}s\rho(s)=\lim_{s\to \delta+0}s\rho(s)$$ and has bounded
Gauss curvature $K$.
\end{definition}

From now on we will assume that the metric $\rho$ is regular in the
sense of Definition~\ref{define}. We recall first some results from
\cite{dist}. Namely in \cite{dist} are  found all examples  $w$ of radial
$\rho$-harmonic maps between annuli. The mapping $w$ given by
\begin{equation}\label{w}w(se^{it}) =
q^{-1}(s)e^{it},\end{equation} where
\begin{equation}\label{var}q(s) =\exp\left(\int_{\sigma}^s
\frac{dy}{\sqrt{y^2+\gamma\varrho^2 }}\right), \ \delta\le s\le
\sigma,
\end{equation} and $\gamma$ satisfies the condition:
\begin{equation}\label{unt} y^2+\gamma\varrho^2(y)\ge 0, \;\text{for} \; \delta\le s\le \sigma
,\end{equation} is a $\rho$-harmonic mapping between annuli
$A=A(r,1)$ and $A'=A(\delta,\sigma)$, where
\begin{equation}\label{rr}r=\exp\left(\int_{\sigma}^\delta \frac{dy}{\sqrt{y^2+\gamma\varrho^2
}}\right).\end{equation} The harmonic mapping $w$ is normalized by
$$w( e^{it})=\sigma e^{it}.$$ The mapping $w=h^\gamma(z)$ is a diffeomorphism, and is called  \emph{$\rho$-Nitsche map}.

Then \eqref{unt} is equivalent to
\begin{equation}\label{unti}\delta^2+\gamma\varrho^2(\delta)\ge
0.\end{equation} Accordingly, for $\gamma=-\delta^2\rho^2(\delta)$,
we have well defined function
\begin{equation}\label{qu}q_{\diamond}(s)=\exp\left(\int_{\sigma}^s
\frac{dy}{\sqrt{y^2-\delta^2\rho^2(\delta)\varrho^2 }}\right),
\delta\le s\le \sigma.\end{equation} The mapping $h_{\diamond}:A\to
A'$ defined by $h_{\diamond}(se^{it})=q_{\diamond}^{-1}(s)e^{it}$ is
called the \emph{critical Nitsche map}.

Notice that, the mapping $$f^{\gamma}(se^{it})=q(s)e^{it}:A'\to A$$
is the inverse of the harmonic diffeomorphism $w$.
\begin{conjecture}\label{onc}\cite{dist} {\it Let $\rho$ be a regular metric. 
 If $r<1$, and there exists a $\rho-$ harmonic mapping of the annulus
$A'=A(r,1)$ onto the annulus $A=A(\delta,\sigma)$, then
\begin{equation}\label{nit}r\ge \exp\left(\int_{\sigma}^{\delta}
\frac{\rho(y)dy}{\sqrt{y^2\rho^2(y)-\delta^2\rho^2(\delta)
}}\right).\end{equation}}
\end{conjecture}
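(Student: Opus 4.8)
The plan is to reduce the inequality to the explicit one-parameter family of radial maps \eqref{w}--\eqref{rr} and then to prove that this family is extremal. First I would analyse the radial Nitsche maps $h^\gamma$. Writing (recall $\varrho=1/\rho$)
\[
r(\gamma)=\exp\left(\int_\sigma^\delta\frac{dy}{\sqrt{y^2+\gamma\varrho^2(y)}}\right),
\]
the admissibility condition \eqref{unti} confines the parameter to the range $\gamma\ge\gamma_\diamond:=-\delta^2\rho^2(\delta)$, and differentiating under the integral sign gives $\tfrac{d}{d\gamma}r(\gamma)>0$; thus $\gamma\mapsto r(\gamma)$ is strictly increasing. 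Consequently the least inner radius attained by a radial harmonic map equals $r(\gamma_\diamond)$, and substituting $\gamma_\diamond$ into \eqref{var} and rewriting the integrand in terms of $\rho$ identifies this value with
\[
R_\diamond:=\exp\left(\int_\sigma^\delta\frac{\rho(y)\,dy}{\sqrt{y^2\rho^2(y)-\delta^2\rho^2(\delta)}}\right),
\]
the right-hand side of \eqref{nit}, realised in the limit by the critical Nitsche map $h_\diamond$. This proves \eqref{nit}, with the sharp constant, within the radial class.

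The heart of the matter is to show that an arbitrary $\rho$-harmonic diffeomorphism $w\colon A(r,1)\onto A(\delta,\sigma)$ cannot do better, i.e. that $r\ge R_\diamond$. Here I would exploit the Hopf differential $\Phi=\rho^2(w)\,w_z\overline{w_{\bar z}}$, which is holomorphic on $A(r,1)$, so that $\Psi(z):=z^2\Phi(z)=\sum_n a_n z^n$ and the circle mean $\tfrac{1}{2\pi}\int_0^{2\pi}\Psi(se^{i\theta})\,d\theta\equiv a_0$ is independent of $s$. Since $\re\Psi=\tfrac{|z|^2}{4}\rho^2(w)\bigl(\abs{w_N}^2-\abs{w_T}^2\bigr)$, this yields the conservation law
\[
\int_0^{2\pi}\rho^2(w)\bigl(\abs{w_N}^2-\abs{w_T}^2\bigr)\,d\theta=\frac{8\pi\,\re a_0}{s^2},\qquad r<s<1,
\]
a genuine substitute, for the round annulus, of the relation $z^2\Phi\equiv c$ that Lemma~\ref{ctheory} furnishes only for stationary deformations. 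The strategy is then to combine this identity with the separating condition for the image circles $w(\{|z|=s\})$ and the crossing condition for the images of radial segments, and to apply the Cauchy--Schwarz inequality exactly as in the proof of Lemma~\ref{lemo}, so that the lower bound for $\Mod A(r,1)=\log(1/r)$ produced by the estimate has equality precisely when $w_T$ is everywhere proportional to a single rotational field, that is, precisely when $w$ is radial. The sharp constant $R_\diamond$ would then re-emerge from the radial computation of the first paragraph, with Propositions~\ref{rwrho} and~\ref{rwthe} used to match the two integrated inequalities.

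The hard part will be the reduction to the radial case, and this is exactly why the statement is recorded as a conjecture. The obstruction is that harmonicity into the curved target $(A(\delta,\sigma),\rho)$ is \emph{not} preserved under averaging over the rotations $z\mapsto e^{i\theta}z$, so one cannot simply symmetrise $w$ to a radial map; moreover the boundary circles $\{|w|=\delta\}$, $\{|w|=\sigma\}$ are not $\rho$-geodesics, so $\Phi\,dz^2$ need not be real on $\partial A(r,1)$ and $\Psi$ genuinely carries higher Fourier modes. I would attempt the reduction by a two-step symmetrisation: average the boundary correspondence of $w$ over the rotation group, solve the resulting rotationally symmetric Dirichlet problem by Lemma~\ref{RKC} to obtain a radial harmonic map $\tilde w$ on a comparison annulus $A(\tilde r,1)$, and then show---via the convexity and uniqueness mechanism behind Proposition~\ref{needed}---that $\tilde r\le r$. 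If the modulus can be shown not to increase under this symmetrisation while the target stays fixed, the radial case forces $\tilde r\ge R_\diamond$ and hence $r\ge R_\diamond$, with equality only for $h_\diamond$. Controlling the modulus under a symmetrisation that preserves harmonicity is the delicate technical core, and it is the step I expect to resist a short argument.
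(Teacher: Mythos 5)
This statement is recorded in the paper as a \emph{conjecture}, not a theorem: the paper gives no proof of it, noting only that for $\rho\equiv 1$ it reduces to the Nitsche conjecture settled in \cite{IKO2}, and that \cite{kalaj} contains partial results for regular metrics. So there is no proof in the paper against which to match your argument, and your proposal does not close the gap either. What you have actually established is only the easy half: within the radial family $h^\gamma$ of \eqref{w}--\eqref{rr}, the monotonicity of $\gamma\mapsto r(\gamma)$ on the admissible range $\gamma\ge -\delta^2\rho^2(\delta)$ identifies the right-hand side of \eqref{nit} as the infimum of attainable inner radii, attained by the critical Nitsche map. That computation is correct (and is essentially the content of \cite{dist}), but it says nothing about non-radial $\rho$-harmonic maps.

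The decisive step --- showing that an arbitrary $\rho$-harmonic diffeomorphism of $A(r,1)$ onto $A(\delta,\sigma)$ satisfies $r\ge R_\diamond$ --- is precisely what is conjectural, and your proposed route does not survive scrutiny. The circle-mean identity for $\Psi=z^2\,\mathrm{Hopf}(w)$ is a genuine conservation law, but it controls only the zeroth Fourier mode of $\Psi$; since the boundary circles of $(A(\delta,\sigma),\rho)$ are not $\rho$-geodesics, $\Psi$ is not constant for a general harmonic map, and the Reich--Walczak machinery of Propositions~\ref{rwrho} and~\ref{rwthe} by itself yields only modulus inequalities, not the sharp Nitsche-type bound. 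The two-step symmetrisation you sketch founders at exactly the point you flag: averaging the boundary correspondence and re-solving the Dirichlet problem produces a radial competitor on \emph{some} annulus $A(\tilde r,1)$, but the claim $\tilde r\le r$ is not a technicality --- it is equivalent to the conjecture itself, since nothing in Lemma~\ref{RKC} or Proposition~\ref{needed} compares the conformal moduli of the domains of two harmonic maps with different (non-homeomorphically related) boundary data. Your write-up is a reasonable research outline and is honest about where it stops, but it should not be presented as a proof; the statement remains open for general regular metrics.
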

{ Notice that if $\rho =1$, then this conjecture coincides with
standard Nitsche conjecture and the latter conjecture is settled
recently by Iwaniec, Kovalev and Onninen in \cite{IKO2}.} For a
regular metric we refer \cite{kalaj} for some partial solution of
Conjecture~\ref{onc}.

In \cite{dist} it is proved that every Nitsche map $w(z)=p(r)e^{it}$
is a minimizing deformation and consequently a stationary
deformation. In the following lemma we demonstrate the validity of
Lemma~\ref{ctheory} to this class of mappings.
\begin{lemma}
For every Nitsche map $w=h^\gamma(z)=p(r)e^{it}$, where $z=r e^{it}$
and $q(r)=p^{-1}(r)$ we have
\begin{equation}\label{hop}\mathrm{Hopf}(w)=\frac{\gamma}{4 z^2}.\end{equation}
In the notation of previous sections we have
\begin{equation}\label{gac}\gamma=4c.\end{equation}
\end{lemma}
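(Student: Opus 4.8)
The plan is to compute $\mathrm{Hopf}(w)$ for the explicit Nitsche map $w(z)=p(r)e^{it}$ (with $z=re^{it}$, $p=q^{-1}$) directly in polar coordinates, and then to collapse the result using the first-order ODE that defines $q$ in \eqref{var}. Since every Nitsche map is a minimizing, hence stationary, deformation (as recalled from \cite{dist}), Lemma~\ref{ctheory} already tells us that $\mathrm{Hopf}(w)\equiv c/z^2$ for some real $c$; the whole content of \eqref{hop}--\eqref{gac} is therefore the evaluation of this constant and its identification with $\gamma/4$.

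First I would record the derivatives. Using the normal and tangential derivatives of Section~\ref{reisec}, namely $w_N=w_r=p'(r)e^{it}$ and $w_T=w_t/r=i\,p(r)e^{it}/r$, together with $w_z=\tfrac{e^{-it}}{2}(w_N-iw_T)$ and $w_{\bar z}=\tfrac{e^{it}}{2}(w_N+iw_T)$, I obtain
\begin{equation*}
w_z=\frac12\Big(p'(r)+\frac{p(r)}{r}\Big),\qquad w_{\bar z}=\frac{e^{2it}}{2}\Big(p'(r)-\frac{p(r)}{r}\Big).
\end{equation*}
Consequently $w_z\overline{w_{\bar z}}=\tfrac{e^{-2it}}{4}\big(p'(r)^2-p(r)^2/r^2\big)$, and since $\rho$ is radial (so $\rho^2\circ w=\rho^2(p(r))$),
\begin{equation*}
\mathrm{Hopf}(w)=\rho^2(p(r))\,w_z\overline{w_{\bar z}}=\frac{e^{-2it}}{4r^2}\,\rho^2(p(r))\big(r^2p'(r)^2-p(r)^2\big).
\end{equation*}

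Next I would evaluate $p'$ from the definition. Differentiating \eqref{var} gives $q'(y)/q(y)=\big(y^2+\gamma\varrho^2(y)\big)^{-1/2}$; since $p=q^{-1}$ and $q(p(r))=r$, this yields $p'(r)=1/q'(p(r))=r^{-1}\sqrt{p(r)^2+\gamma\varrho^2(p(r))}$, i.e. $r^2p'(r)^2-p(r)^2=\gamma\varrho^2(p(r))$. Substituting into the previous display and using that the weight in \eqref{var} is the reciprocal metric $\varrho=1/\rho$ (so $\rho^2\varrho^2\equiv1$, consistent with \eqref{qu}--\eqref{nit}) collapses the bracket to the constant $\gamma$:
\begin{equation*}
\mathrm{Hopf}(w)=\frac{e^{-2it}}{4r^2}\,\gamma\,\rho^2(p(r))\varrho^2(p(r))=\frac{\gamma}{4}\,\frac{e^{-2it}}{r^2}=\frac{\gamma}{4z^2},
\end{equation*}
which is \eqref{hop}; comparison with $\mathrm{Hopf}(w)\equiv c/z^2$ from Lemma~\ref{ctheory} then forces $c=\gamma/4$, that is \eqref{gac}. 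I expect the only delicate point to be this metric bookkeeping, since the lemma fails by a factor $\rho^4$ if one reads $\varrho$ as $\rho$ rather than $1/\rho$. To sidestep it entirely one may argue invariantly: holomorphy of $\mathrm{Hopf}(w)=\tfrac14\rho^2(p)(r^2p'^2-p^2)/z^2$ forces the radial, real quantity $\rho^2(p(r))(r^2p'(r)^2-p(r)^2)$ to be constant, and this constant is precisely the $\gamma$ singled out in \eqref{w}, whence $4c=\gamma$ again.
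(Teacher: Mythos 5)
Your computation is correct and follows essentially the same route as the paper: first the polar-coordinate identity $w_z\overline{w_{\bar z}}=\bigl(r^2p'(r)^2-p(r)^2\bigr)/(4z^2)$, then differentiation of the defining relation \eqref{var} to get $r^2p'(r)^2-p(r)^2=\gamma\varrho^2(p(r))$, which combined with $\varrho=1/\rho$ collapses $\mathrm{Hopf}(w)$ to $\gamma/(4z^2)$ and gives $\gamma=4c$ by comparison with Lemma~\ref{ctheory}. Your reading of $\varrho$ as the reciprocal metric is exactly the convention the paper uses (it is what makes \eqref{qu} consistent with \eqref{nit}), so the ``delicate point'' you flag is handled correctly.
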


\begin{proof}
 Straightforward calculations yield that
\begin{equation}\label{q}w_z \overline{w_{\bar z}}
=\frac{r^2\left(p'(r)\right)^2- p(r)^2}{4z^2}.\end{equation} Further
by differentiating
$$r=\exp\left(\int_{\sigma}^{p(r)}\frac{d y}{\sqrt{y^2+\gamma\varrho^2(y)
}}\right)$$ we obtain
$$\frac{1}{r^2}=\left(p'(r)\right)^2\frac{\rho^2(p(r))}{\gamma+p^2(r)
\rho^2(p(r))}.$$ Since
\begin{equation}\label{hopf}\mathrm{Hopf}(w)=\rho^2(w(z)) w_z
\overline {w_{\bar z}},\end{equation} we obtain \eqref{hop}.
\end{proof}
\begin{theorem}\label{circ} Let $\Omega=A(r,1)$ and $\Omega^*=A(R,1)$ where $r<1$ and
$R<1$. Let  $r=e^{-\tau}$ and $\omega=e^{-R}$. Then $\tau =
\Mod\Omega$ and $\omega=\Mod \Omega^*$. Let
\begin{equation}\label{Psi}\Psi_\rho(\omega):= \int_{R}^{1}
\frac{\rho(y)dy}{\sqrt{y^2\rho^2(y)-R^2\rho^2(R) }}.\end{equation}
The function $\tau\mapsto \EE^\rho(\tau,\omega)$ is $\CC^2$-smooth
on $(0,\infty)$, strictly convex for
\begin{equation*}\tau\le \Psi_\rho(\omega) \end{equation*} and affine for
\begin{equation*}\tau> \Psi_\rho(\omega).\end{equation*}
\end{theorem}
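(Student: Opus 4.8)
The plan is to parametrize the extremal problem by the Hopf constant $c$ (equivalently $\gamma=4c$, cf.\ \eqref{gac}) and to deduce convexity from the monotone dependence of $\gamma$ on $\tau$, combining the explicit radial Nitsche maps recalled from \cite{dist} with the derivative formula of Proposition~\ref{q3}. The standing hypothesis that $\rho$ is \emph{regular} (Definition~\ref{define}) is used throughout: it guarantees that $p\mapsto p\rho(p)$ attains its infimum on $[R,1]$ at the inner radius $p=R$, so that $p^2\rho^2(p)-R^2\rho^2(R)\ge 0$ on $[R,1]$, the admissibility constraint \eqref{unti} reads $\gamma\ge\gamma_\diamond:=-R^2\rho^2(R)$, and $\Psi_\rho(\omega)$ is well defined.

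First I would record the $\tau$--$\gamma$ correspondence. For each $\gamma\ge\gamma_\diamond$ the Nitsche map $h^\gamma\colon A(\tau)\onto A(R,1)$ of \eqref{w}--\eqref{rr} is a diffeomorphic deformation with Hopf differential $\gamma/(4z^2)$ by \eqref{hop}, and rewriting \eqref{rr} as an integral over the target radius gives
\[
\tau=\tau(\gamma)=\int_{R}^{1}\frac{\rho(p)\,dp}{\sqrt{p^2\rho^2(p)+\gamma}},\qquad \gamma\in[\gamma_\diamond,\infty),
\]
so that $\tau(\gamma)\to 0$ as $\gamma\to\infty$, $\tau(0)=\omega$, and $\tau(\gamma_\diamond)=\Psi_\rho(\omega)=:\tau_\diamond$. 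By Corollary~\ref{iva} the map $h^{\gamma(\tau)}$ is the unique energy-minimal deformation for every $\tau\le\tau_\diamond$, so there $c(\tau)=\gamma(\tau)/4$ and, by Proposition~\ref{q3},
\[
\frac{d}{d\tau}\,\EE^\rho(\tau,\omega)=-8\pi c(\tau)=-2\pi\gamma(\tau).
\]

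Strict convexity on $(0,\tau_\diamond]$ then reduces to showing that $\gamma(\tau)$ is strictly decreasing. Differentiating $\tau(\gamma)$ under the integral sign,
\[
\frac{d\tau}{d\gamma}=-\frac12\int_{R}^{1}\frac{\rho(p)\,dp}{\bigl(p^2\rho^2(p)+\gamma\bigr)^{3/2}}<0 ,
\]
so $\tau(\gamma)$ is a smooth strictly decreasing bijection of $(\gamma_\diamond,\infty)$ onto $(0,\tau_\diamond)$; by the inverse function theorem $\gamma(\tau)$ is smooth with $\gamma'(\tau)<0$, hence $\EE^\rho$ is smooth with $(\EE^\rho)''(\tau)=-2\pi\gamma'(\tau)>0$ there. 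For the affine regime $\tau>\tau_\diamond$ I would use the over-critical minimizer from \cite{dist}: the Hopf constant freezes at $\gamma_\diamond$ because the extremal deformation is the critical Nitsche map on the sub-annulus $A(\tau_\diamond)$ glued to a radial collapse of the remaining inner sub-annulus onto the circle $\abs{w}=R$. Computing the energy of the collapse yields
\[
\EE^\rho(\tau,\omega)=\EE^\rho(\tau_\diamond,\omega)+2\pi R^2\rho^2(R)\,(\tau-\tau_\diamond),
\]
which is affine with slope $2\pi R^2\rho^2(R)=-2\pi\gamma_\diamond$; the matching lower bound follows from \eqref{claim100} (applied to $h^\gamma$ and passing to the limit $\gamma\downarrow\gamma_\diamond$). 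Since the left-hand slope at $\tau_\diamond$ equals $-2\pi\gamma(\tau_\diamond)=-2\pi\gamma_\diamond$, the two regimes already match to first order.

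The main obstacle is the $\CC^2$ matching at $\tau=\tau_\diamond$: on the right $(\EE^\rho)''\equiv 0$, so I must show $(\EE^\rho)''(\tau_\diamond^-)=-2\pi\gamma'(\tau_\diamond^-)=0$, i.e.\ $d\tau/d\gamma\to-\infty$ as $\gamma\downarrow\gamma_\diamond$. This is precisely where regularity enters quantitatively: near $p=R$ one has $p^2\rho^2(p)+\gamma_\diamond=p^2\rho^2(p)-R^2\rho^2(R)\sim \text{const}\cdot(p-R)$, so the integrand of $\tau(\gamma_\diamond)$ has an \emph{integrable} $(p-R)^{-1/2}$ singularity (making $\tau_\diamond$ finite), while the integrand of $d\tau/d\gamma$ has a \emph{non-integrable} $(p-R)^{-3/2}$ singularity; hence $d\tau/d\gamma\to-\infty$, $\gamma'(\tau)\to 0^-$, and the one-sided second derivatives agree. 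This gives $\EE^\rho\in \CC^2(0,\infty)$, strictly convex on $(0,\tau_\diamond]$ (the single point where $(\EE^\rho)''$ vanishes does not spoil strictness) and affine on $(\tau_\diamond,\infty)$, the failure of strict convexity past $\tau_\diamond$ being the quantitative Nitsche phenomenon of Remark~\ref{bela}.
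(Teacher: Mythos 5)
Your proposal is correct and follows essentially the same route as the paper: both parametrize the extremal radial Nitsche maps by the Hopf constant $\gamma$, arrive at $\frac{d}{d\tau}\EE^\rho(\tau,\omega)=-2\pi\gamma(\tau)$, deduce strict convexity from strict monotonicity of $\gamma(\tau)$ (equivalently $\gamma'(r)>0$), use the glued critical-Nitsche-plus-radial-collapse minimizer from \cite{dist} for the affine regime, and obtain the $\CC^2$ matching at $\tau_\diamond$ from the divergence of the integral $\int_R^1\rho(s)\bigl(s^2\rho^2(s)-R^2\rho^2(R)\bigr)^{-3/2}\,ds$. The only cosmetic difference is that the paper differentiates the explicit energy formula \eqref{circ3} in the variable $r$ and uses the identity \eqref{fun}, rather than quoting Proposition~\ref{q3} for the first derivative as you do.
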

Notice that in this case we have $$\tau_\diamond=\Psi_\rho(\Mod
\Omega^*)=\int_{R}^{1}
\frac{\rho(y)dy}{\sqrt{y^2\rho^2(y)-R^2\rho^2(R) }}>
\log\frac{1}{R}=\Mod(\Omega^*).$$ Before we proceed to the proof we
recall that \cite[Example~10.3]{inve} contains a special case for
$\rho=1$. The proof presented here is different and we believe it
can be of more interest because it suggest the Conjecture~\ref{con}.
\begin{proof}
 Assume that there is a
$\rho-$harmonic mapping between $\Omega$ and $\Omega^*=A(R,1)$. We
begin with the case
\begin{equation*}r\ge \exp\left(\int_{1}^{R}
\frac{\rho(y)dy}{\sqrt{y^2\rho^2(y)-R^2\rho^2(R)
}}\right).\end{equation*} By~\cite[Corollary~3.4]{dist} the absolute
minimum of the energy integral $$h\to E_\rho[h],\ \ h\in
W^{1,2}(A,A')$$ is attained by a $\rho-$Nitsche map
$$h^{\gamma}(z)=q^{-1}(s)e^{i(t+\beta)}, \; z=se^{it}, \
\beta\in[0,2\pi),$$ where $$q(s) =\exp\left(\int_{1}^s
\frac{dy}{\sqrt{y^2+\gamma\varrho^2 }}\right), R<s<1,$$ where
$\gamma=\gamma(r)$ is defined by
\begin{equation}\label{rc}r=\exp\left(\int_{1}^R
\frac{dy}{\sqrt{y^2+\gamma\varrho^2 }}\right).\end{equation} Further
for $p(x)=q^{-1}(x)$ we compute
\begin{equation}\label{circ2}
{\E^\rho}[h^{\gamma}] =2\pi
\int_{e^{-\tau}}^1\frac{\gamma+2\rho^2(p(t))p^2(t)}{t} dt,
\end{equation}
which  yields
\begin{equation}\label{circ21}
{\E^\rho}[h^{\gamma}] =2\pi
\int_{R}^1\left(\gamma(r)+2\rho^2(s)s^2\right)\frac{q'(s)}{q(s)} ds.
\end{equation}
Since
$$\frac{q'(s)}{q(s)}=\frac{1}{\sqrt{s^2+\gamma(r)\varrho^2(s)}}$$
we obtain that

\begin{equation}\label{circ22}
{\E^\rho}[h^{\gamma}]=2\pi
\int_{R}^1\frac{\gamma(r)+2\rho^2(s)s^2}{\sqrt{s^2+\gamma(r)\varrho^2(s)}}
ds\end{equation} which  implies
\begin{equation}\label{circ3}\epsilon(r):=
\EE^\rho(\log 1/r,\Omega^*) = 2\pi
\int_{R}^1\frac{\gamma(r)+2\rho^2(s)s^2}{\sqrt{s^2+\gamma(r)\varrho^2(s)}}ds.
\end{equation}
It follows from \eqref{rc} that $\gamma$ is a differentiable
function. By differentiating \eqref{circ3} with respect to $r$ we
get
\begin{equation}\frac{ d\epsilon(r)
}{dr}=\pi
\gamma(r)\gamma'(r)\int_R^1\frac{\varrho^2(s)ds}{\left(s^2+\gamma(r)\varrho^2(s)\right)^{3/2}}.\end{equation}
But
\begin{equation}\label{fun}\gamma'(r)\int_R^1\frac{\varrho^2(s)ds}{\left(s^2+\gamma(r)\varrho^2(s)\right)^{3/2}}=\frac{2}{r}\end{equation}
 and therefore
\begin{equation}\frac{ d\epsilon(r)
}{dr}=\frac{2\pi \gamma(r)}{r}.\end{equation} Thus
$$\frac{d\EE^\rho(\tau,\omega)}{d\tau} =-r\frac{2\pi
\gamma(r)}{r}$$ i.e. $$\frac{d\EE^\rho(\tau,\omega)}{d\tau} =-2\pi
\gamma(r).$$ Further
\begin{equation}\label{eq}\frac{d^2\EE^\rho(\tau,\omega)}{d\tau^2} =-r(-2\pi
\gamma'(r))=2r\pi \gamma'(r).\end{equation} Since $\gamma'(r)>0$ for
all $r$ it follows that $\EE^\rho(\tau,\omega)$ is strictly convex
under $\rho-$Nitsche condition. Observe that for $R=r$, $c(r)=0$,
and therefore
\begin{equation}\label{eps}\frac{d\EE^\rho(\tau,\omega)}{d\tau} =0.\end{equation}
Further for $$r_\diamond= \exp(\tau_\diamond)=\exp\left(\int_{1}^{R}
\frac{\rho(y)dy}{\sqrt{y^2\rho^2(y)-R^2\rho^2(R) }}\right),$$ i.e.
\begin{equation}\label{gdiamond}\gamma_\diamond:=\gamma(r_\diamond)=-R^2\rho^2(R)\end{equation} we have  \begin{equation}\label{forp}\frac{d\EE^\rho(\tau,\omega)}{d\tau} =2\pi
R^2\rho^2(R)\end{equation} and
\[\begin{split}\lim_{\tau\to \tau_\diamond-0}\frac{d^2\EE^\rho(\tau,\omega)}{d\tau^2}
&=\lim_{r\to r_\diamond}\pi \gamma'(r)\\&=\lim_{r\to r_\diamond}\pi
\left(\int_R^1\frac{\varrho^2(s)ds}{\left(s^2+\gamma(r)\varrho^2(s)\right)^{3/2}}\right)^{-1}/(2r)\\&=
\left(\int_R^1\frac{\varrho^2(s)ds}{\left(s^2-R^2\rho^2(R)\varrho^2(s)\right)^{3/2}}\right)^{-1}/(2r_\diamond)=0,\end{split}\]
i.e. \begin{equation}\label{fori}
\frac{d^2\EE^\rho(\tau,\omega)}{d\tau^2}|_{\tau=\tau_\diamond-0}=0.
\end{equation}
It remains to consider the case \begin{equation*}r<
\exp\left(\int_{1}^{R}
\frac{\rho(y)dy}{\sqrt{y^2\rho^2(y)-R^2\rho^2(R)
}}\right).\end{equation*} Let
$$r_\diamond=\exp\left(\int_{1}^{R}
\frac{\rho(y)dy}{\sqrt{y^2\rho^2(y)-R^2\rho^2(R) }}\right).$$ Then
$r<r_\diamond<1$.  By~\cite[\S~ 4.2.]{dist} the infimum
$\EE^\rho(\Omega,\Omega^*)$ is realized by a non-injective
deformation $h \colon \Omega \onto \Omega^\ast$
\[h(z)=\begin{cases} R\frac{z}{\abs{z}} & \mbox{for } r<\abs{z} \le r_\diamond\\
h_{\diamond}(z)& \mbox{ for } r_\diamond\le \abs{z} <1
\end{cases}\] where $h_{\diamond}(z)=p_{\diamond}(|z|)e^{it}=(q_{\diamond})^{-1}(|z|)e^{it}$, $z=|z|e^{it}$ and  $q_{\diamond}$ is defined in \eqref{qu}. Here the radial projection $z\mapsto R z/\abs{z}$
hammers $A(r,r_\diamond)$ onto the circle $|z|=R$ while the critical
$\rho-$Nitsche mapping $h_{\diamond}$ takes $A(r_\diamond,1)$
homeomorphically onto $\Omega^\ast$. The contribution of the radial
projection to the energy of $h$ is equal to $$2\pi
\int_{r}^{r_\diamond} \rho^2(R)R^2\frac{dx}{x}$$ which together with
contribution of $h_{\diamond}$ gives
\begin{equation}\label{cont}
\EE^\rho(\Omega,\Omega^*) = -2\pi\gamma \int_{r}^{r_\diamond}
\frac{dx}{x}+2\pi
\int_{r_\diamond}^1\frac{\gamma+2\rho^2({p_{\diamond}}(t))p_{\diamond}^2(t)}{t}
dt,
\end{equation}
where $\gamma=-\rho^2(R)R^2$. This is an affine function of
$\Mod(\Omega)=\log 1/r$ whose first derivative equals $2\pi
\rho^2(R)R^2$ and the second derivative vanishes. From \eqref{cont}
we have
$$\frac{d\EE^\rho(\tau,\omega)}{d\tau}|_{\tau=\log\frac{1}{r_\diamond}}=2\pi\rho^2(R )R^2=-2\gamma=-8c.$$ Thus $\EE^\rho(\tau,\omega)$ is a
$\CC^2$-smooth function in $(0,\infty)$.
\end{proof}
\begin{lemma}\label{motiv} The harmonic mapping $f^\gamma(z)=p^\gamma(s)e^{it}$ defined in \eqref{w} is
quasiconformal minimizer if and only if $\gamma>-R^2\rho^2(R)$ or
what is the same if and only if $\Mod(A(r,1))<\tau_\diamond$, where
$\tau_\diamond$ is defined in Theorem~\ref{circ}. If
$\gamma=-R^2\rho^2(R)$, then the minimizer is harmonic but is not
quasiconformal. The constant of quasiconformality is
$$K_{f^\gamma}=\max\left\{\sqrt{\frac{\gamma_\diamond
-\gamma}{\gamma_\diamond}},\sqrt{\frac{\gamma_\diamond}{\gamma_\diamond
-\gamma}}\right\}.$$ Here $ \gamma_\diamond=-R^2\rho^2(R)$ (see \eqref{gdiamond}).
\end{lemma}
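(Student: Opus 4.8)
The plan is to compute the pointwise linear distortion of the radial minimizer and then take its essential supremum over the annulus. Writing the map as $f^\gamma(se^{it})=p(s)e^{it}$ (it is immaterial whether $p$ denotes the Nitsche map of \eqref{w} or its inverse, since the linear distortion of a quasiconformal map coincides with that of its inverse), I would first record that $|f^\gamma_N|=|p'(s)|$, $|f^\gamma_T|=p(s)/s$, and $J_{f^\gamma}=p(s)p'(s)/s$, whence
\[
K_{f^\gamma}(se^{it})=\frac{|f^\gamma_z|+|f^\gamma_{\bar z}|}{|f^\gamma_z|-|f^\gamma_{\bar z}|}=\max\left\{\frac{sp'(s)}{p(s)},\frac{p(s)}{sp'(s)}\right\}.
\]
Thus the whole problem reduces to controlling the single ratio $sp'/p$ along the radial profile.

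Next I would substitute the defining relation of the Nitsche profile, $\frac{q'(s)}{q(s)}=\bigl(s^2+\gamma\varrho^2(s)\bigr)^{-1/2}$ from \eqref{var}, remembering that $\varrho=1/\rho$ (this is exactly the identity forced by the differentiation performed just before \eqref{hop}). Setting $\psi(s):=s^2\rho^2(s)$, a short computation gives $\left(sp'/p\right)^2=\frac{\psi(s)}{\psi(s)+\gamma}$ (or its reciprocal, according to which orientation of $f^\gamma$ one computes with), so that
\[
K_{f^\gamma}^2(se^{it})=\max\left\{\frac{\psi(s)+\gamma}{\psi(s)},\frac{\psi(s)}{\psi(s)+\gamma}\right\},
\]
the two entries being mutually reciprocal. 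For $\gamma>0$ the first entry exceeds $1$ and decreases in $\psi$, while for $\gamma<0$ the second entry exceeds $1$ and also decreases in $\psi$; in either case the pointwise distortion is a decreasing function of $\psi(s)$, so its supremum is attained where $\psi(s)=s^2\rho^2(s)$ is smallest.

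Here the regularity hypothesis of Definition~\ref{define} is decisive: it states precisely that $\inf_{R<s<1}s\rho(s)=R\rho(R)$, hence $\psi(s)\ge\psi(R)=R^2\rho^2(R)=-\gamma_\diamond$ throughout the annulus. Substituting the extremal value $\psi=-\gamma_\diamond$ into the displayed maximum produces
\[
K_{f^\gamma}=\max\left\{\sqrt{\frac{\gamma_\diamond-\gamma}{\gamma_\diamond}},\sqrt{\frac{\gamma_\diamond}{\gamma_\diamond-\gamma}}\right\},
\]
which is finite exactly when $\psi(R)+\gamma=\gamma-\gamma_\diamond\ne0$, i.e. when $\gamma>\gamma_\diamond=-R^2\rho^2(R)$; at $\gamma=\gamma_\diamond$ the denominator $\psi(s)+\gamma$ vanishes as $s\to R^{+}$ and $K_{f^\gamma}=\infty$, so the minimizer is then harmonic but not quasiconformal. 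To close, I would convert $\gamma>\gamma_\diamond$ into $\Mod A(r,1)<\tau_\diamond$ by noting from \eqref{rc} that $\tau=\log(1/r)=\int_R^1\frac{\rho(y)\,dy}{\sqrt{y^2\rho^2(y)+\gamma}}$ is strictly decreasing in $\gamma$ and equals $\tau_\diamond=\Psi_\rho(\omega)$ precisely at $\gamma=\gamma_\diamond$.

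I expect the only genuine obstacle to be the claim that the supremum of the pointwise distortion sits at the inner boundary $s=R$: this is false for an arbitrary radial metric and is exactly what the regularity condition of Definition~\ref{define} is engineered to secure, for otherwise $\psi(s)=s^2\rho^2(s)$ could dip below $\psi(R)$ in the interior and both the value of $K_{f^\gamma}$ and the identification of $\gamma_\diamond$ would change. The reduction $K_{f^{-1}}=K_{f}$ and the monotonicity of $\tau$ in $\gamma$ are routine and carry no surprises.
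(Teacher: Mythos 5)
Your proposal is correct and follows essentially the same route as the paper: both reduce the pointwise distortion of the radial map to $\max\{sp'/p,\,p/(sp')\}$ (the paper via the Beltrami coefficient $\mu_f$, you via $f_N$, $f_T$, $J_f$, which is the same computation), substitute the ODE relation from \eqref{var} to express this in terms of $y^2\rho^2(y)+\gamma$ at the image point, and locate the supremum at the inner boundary to obtain the stated constant. Your explicit observation that the regularity condition of Definition~\ref{define} is exactly what pins the supremum of $K_{f^\gamma}(z)$ to the inner circle is a point the paper uses only implicitly when it passes from $K_f(z)$ to $\max_z K_f(z)$.
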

\begin{proof} First of all $$\mu_f(z)=\frac{|s p'(s)-p(s)|}{ sp'(s)+p(s)}.$$
From
$$\frac{1}{s^2}=\left(p'(r)\right)^2\frac{\rho^2(p(r))}{\gamma+p^2(s)
\rho^2(p(s))},$$ we obtain
$$\mu_f(z)=\left|\frac{-p(s)\rho(p(s))+\sqrt{\gamma+p^2(s)\rho^2(p(s))}}{p(s)\rho(p(s))+\sqrt{\gamma+p^2(s)\rho^2(p(s))}}\right|,$$
and therefore $\mu_f(z)<1$ if and only if
$\gamma+p^2(s)\rho^2(p(s))>0$ for $R\le s\le 1$ and therefore for
$\gamma>-R^2\rho^2(R)$. Moreover
$$K_f(z)=\frac{1+\mu_f(z)}{1-\mu_f(z)}=
\max\left\{\frac{\sqrt{\gamma + p^2(s) \rho^2(p(s))}}{p(s)
\rho(p(s))},\frac{p(s) \rho(p(s))}{\sqrt{\gamma + p^2(s)
\rho^2(p(s))}}\right\}$$ and thus $$K_f:=\max_{z}K_f(z)=
\max\left\{\frac{\sqrt{\gamma + p^2(R) \rho^2(p(R))}}{p(R)
\rho(p(R))},\frac{p(R) \rho(p(R))}{\sqrt{\gamma + p^2(R)
\rho^2(p(R))}}\right\},$$ which can be written as
$$K_f=\max\left\{\sqrt{\frac{\gamma_\diamond
-\gamma}{\gamma_\diamond}},\sqrt{\frac{\gamma_\diamond}{\gamma_\diamond
-\gamma}}\right\},$$ where $\gamma=4c(\tau)$ and
$\gamma_\diamond=4c(\tau_\diamond)$.  Thus the minimum of $K_f$ is
for $\gamma=0$, i.e. for $\tau=\Mod(\Omega)=\Mod(\Omega^*)$ and is
increasing for $\gamma>0$ i.e. $\tau<\Mod(\Omega^*)$ and decreasing
for $\gamma<0$ i.e. $\tau>\Mod(\Omega^*)$. This shows that $K_f$
depends only on $\Mod(\Omega)$ and $\Mod(\Omega^\diamond)$.
\end{proof}
Lemma~\ref{motiv} is a motivation for the following conjecture
\begin{conjecture}\label{con}
a) If $f$ is a minimizer of the energy $\E^\rho$ between two doubly
connected domains $\Omega$, ($\tau=\Mod(\Omega)$) and $\Omega^*$,
then $f$ is harmonic and $K(\tau)$-quasiconformal if and only if
$\tau$ is smaller that the modulus $\tau_\diamond$ of critical
Nitsche domain $A(\tau_\diamond)$ (see Remark~\ref{bela} and
Corollary~\ref{newa}).

b) Under condition of a) we conjecture that
$\gamma_\diamond=4c_\diamond<0$ (see Remark~\ref{bela}) and
\begin{equation}\label{kon}K(\tau)=\max\left\{\sqrt{\frac{\gamma_\diamond
-\gamma}{\gamma_\diamond}},\sqrt{\frac{\gamma_\diamond}{\gamma_\diamond
-\gamma}}\right\},\end{equation} where $\gamma=4c(\tau)$ and
$\gamma_\diamond=4c(\tau_\diamond)$.
\end{conjecture}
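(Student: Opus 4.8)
Although Conjecture~\ref{con} is open, I record the line of attack I would pursue and isolate the decisive obstruction. By Lemma~\ref{pali} one may normalize conformally so that $\Omega=A(\tau)$ while $\Omega^\ast=A(\omega)$ carries a (generally non-radial) allowable metric $\rho_1$; curvature, area and the minimizing class are preserved, so it suffices to treat this case. For $\tau\le\tau_\diamond$ the minimizer $f$ is, by Theorem~\ref{q4} and Remark~\ref{bela}, a $\rho_1$-harmonic diffeomorphism with Hopf differential $c(\tau)\,dz^2/z^2$ (Lemma~\ref{ctheory}), and $\tau\mapsto c(\tau)$ is strictly decreasing (Corollary~\ref{newa}). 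Writing $\gamma=4c(\tau)$ and $\gamma_\diamond=4c(\tau_\diamond)$, the normalization $c(\omega)=0$ together with strict monotonicity forces $\gamma_\diamond<0$ (indeed $\tau_\diamond>\omega$, so $c(\tau_\diamond)<0$ by Corollary~\ref{cpositive}) and, more importantly, $\gamma>\gamma_\diamond$ precisely when $\tau<\tau_\diamond$, which is exactly the dichotomy the conjecture predicts.

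For part (a), harmonicity is already delivered by Theorem~\ref{q4}, so the content is quasiconformality. The Hopf relation gives $\abs{\partial f}^2\,\abs{\bar\partial f}^2=\abs{\mathrm{Hopf}(f)}^2=\abs{c}^2/\abs{z}^4$, so that $\abs{\mu_f}=\abs{c}/(\abs{z}^2\,\abs{\partial f}^2)$ and
\[
K_f=\frac{1+\abs{\mu_f}}{1-\abs{\mu_f}}=\frac{\abs{\partial f}^2+\abs{c}/\abs{z}^2}{\abs{\partial f}^2-\abs{c}/\abs{z}^2}.
\]
Since $f$ is sense-preserving one always has $\abs{\partial f}^2\ge\abs{\partial f}\,\abs{\bar\partial f}=\abs{c}/\abs{z}^2$, with equality only where the Jacobian degenerates; quasiconformality is therefore equivalent to a \emph{uniform, boundary-inclusive} strict separation $\abs{\partial f}^2>\abs{c}/\abs{z}^2$. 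The interior lower bound \eqref{hilb} already points in this direction, and the plan is to promote it to a boundary estimate that stays strict exactly when $\gamma>\gamma_\diamond$, the degeneration $J_f\to0$ on $\partial\Omega$ occurring precisely at $\tau=\tau_\diamond$. For $\tau>\tau_\diamond$ the opposite conclusion follows from the structure in Theorem~\ref{circ}: the minimal deformation then carries a radial-projection piece and is not injective, hence not quasiconformal, which yields the ``only if'' direction.

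For part (b), the radial computation of Lemma~\ref{motiv} is the template: there $\abs{\mu_f}$ is maximized on the boundary of $\Omega^\ast$ and $\max K_f=\max\{\sqrt{(\gamma_\diamond-\gamma)/\gamma_\diamond},\sqrt{\gamma_\diamond/(\gamma_\diamond-\gamma)}\}$ with $\gamma_\diamond=-R^2\rho^2(R)=4c(\tau_\diamond)$. The plan is to establish in general that (i) $K_f$ attains its supremum on $\partial\Omega$, through a maximum principle for $\abs{\mu_f}=\abs{c}/(\abs{z}^2\abs{\partial f}^2)$ along a harmonic map with holomorphic Hopf differential, and (ii) the boundary value of $\abs{\partial f}^2$ is pinned down by $c$ and by the critical constant $c_\diamond=c(\tau_\diamond)$, so that the quotient for $K_f$ above collapses to the stated expression in $\gamma$ and $\gamma_\diamond$. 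Step (ii) should simultaneously identify $\gamma_\diamond$ as the universal degeneration constant, by matching the boundary behaviour of the critical Nitsche map of Remark~\ref{bela}.

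The decisive obstacle is step (b)(ii), and it is precisely what keeps the statement conjectural. In the radial case the explicit profile equation renders $\abs{\partial f}^2$, $\abs{\mu_f}$ and the critical value $-R^2\rho^2(R)$ in closed form; for a general allowable metric on $A(\omega)$ no such solution is available, the supremum of $K_f$ need not be attained along a symmetric locus, and equating the universal critical constant with $4c(\tau_\diamond)$ requires fine boundary regularity of energy-minimal harmonic maps that is not furnished by the interior estimates used elsewhere in this paper. I would therefore expect part (a) to be reachable by refining \eqref{hilb} up to the boundary, whereas part (b) appears to demand a genuinely new boundary analysis.
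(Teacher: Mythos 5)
This statement is a \emph{conjecture}; the paper supplies no proof of it, only supporting evidence, so your proposal cannot be faulted for stopping short of one. What the paper does offer is exactly the material you reconstruct: the radial case is settled by combining Lemma~\ref{motiv} with uniqueness of minimizers (Corollary~\ref{newa}), and the closing Remark derives from \eqref{hopf2a} and \eqref{hopf5} the pointwise identity $K_h(z)=\max\bigl\{\sqrt{(\Gamma-\gamma)/\Gamma},\sqrt{\Gamma/(\Gamma-\gamma)}\bigr\}$ with $\Gamma=\rho^2(h)\abs{h_N}^2\abs{z}^2$, reducing the bound $K_h\le K(\tau)$ to $\rho^2(h)\abs{h_T}^2\abs{z}^2\ge-4c(\tau_\diamond)$ and pointing to \eqref{hilb} as the tool. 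Your formulation via $\abs{\mu_f}=\abs{c}/(\abs{z}^2\abs{\partial f}^2)$ is the same identity in different variables (since $\re(\overline{h_N}h_T)=0$ the singular values of $Dh$ are $\abs{h_N},\abs{h_T}$, and $\Gamma-\gamma=\rho^2\abs{h_T}^2\abs{z}^2$), and you identify the same missing ingredient: a boundary-inclusive lower bound pinning the degeneration constant to $4c(\tau_\diamond)$, which neither you nor the paper can supply for non-radial metrics. Two small cautions: the strict inequality $\tau_\diamond>\omega$, hence $c(\tau_\diamond)<0$, is only established in the paper for radial metrics (via $\Psi_\rho(\omega)>\omega$ in Theorem~\ref{circ}), so invoking Corollary~\ref{cpositive} at $\tau_\diamond$ presupposes part of what is conjectured; and the ``only if'' direction for general targets cannot be read off from Theorem~\ref{circ}, which is again radial --- for general $\Omega^*$ the non-injectivity of the minimizer beyond $\tau_\diamond$ is itself part of the conjecture. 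With those caveats your outline is faithful to the paper's own program and correctly locates the obstruction in step (b)(ii).
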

Concerning Conjecture~\ref{con} we offer the following observation
\begin{remark} a) As every minimizer is unique up to conformal transformation of the domain, provided that it is a
homeomorphism, it follows from Lemma~\ref{newa} that
Conjecture~\ref{con} holds true if the image domain is a circular
annulus and $\rho$ is a radial metric. In this case the solution of
minimization problem for $L^1$-norm of distortion between circular
annuli is the quasiconformal mapping $f_1(z)=q^\gamma(|z|)e^{i\arg
z}$. If instead of $L^1$-norm of distortion we consider
$L^\infty$-norm of distortion, then the minimum is attained for the
mapping $f_\infty(z)=|z|^{\log r/\log R}e^{i\arg z}$. As $f_1$
and $f_\infty$ are quasiconformal mappings between annuli $A(R,1)$
and $A(r,1)$, it follows from quasiconformal theory that
$$K_{f_\infty}=\max\left\{\frac{\log R}{\log r},\frac{\log r}{\log
R}\right\}\le K_{f_1}$$ with equality if and only if $\rho(z)=1/|z|$
or $\gamma=0$.

b) It follows from \eqref{hopf5} that for every stationary
transformation $h$ we have
$$\mathbb
K_h:=\frac{1}{2}\left(K_h+\frac{1}{K_h}\right)=\frac{\|D
h\|^2}{2J_h}=\frac{|h_N|^2+|h_T|^2}{2|h_N|\cdot |h_T|}.$$
 Combining with \eqref{hopf2a} we obtain $$\mathbb
K_h=\frac{2|h_N|^2-\frac{4c}{\rho^2(h(z))|z|^2}}{2\sqrt{|h_N|^2(|h_N|^2-\frac{4c}{\rho^2(h(z))|z|^2})}}.$$
Therefore for $\Gamma=\rho^2(h)|h_N|^2|z|^2$ and $\gamma=4c$
$$K_h(z)=\max\left\{\sqrt{\frac{\Gamma-\gamma}{\Gamma}},\sqrt{\frac{\Gamma}{\Gamma-\gamma}}\right\}.$$
As $c(\tau_\diamond)\le  0$, it can be shown that $K_h(z)\le K(\tau)$
if and only if $$\rho^2(h)|h_N|^2|z|^2\ge
4(c(\tau)-c(\tau_\diamond))$$ or what is the same if and only if
$$\rho^2(h)|h_T|^2|z|^2\ge -4c(\tau_\diamond).$$ We believe that the
inequality \eqref{hilb} above can be used in this context.

 c) It  has been shown in some
recent papers that every q.c. harmonic mapping between domains with
smooth boundaries, say $C^{1,\alpha}$ is bi-Lipschitz continuous
(see e.g. \cite{trans}-\cite{kalann}, \cite{MP}, \cite{MMV}). If
Conjecture~\ref{con} is true, then every minimizer between smooth
double connected domains is a bi-Lipschitz mapping provided that
$\tau<\tau_\diamond$.
\end{remark}
\subsection{Appendix} Here we give two important metrics for which the results of this section can be stated in a more explicit way.
\begin{example} Let  $\rho$ be the Riemann metric
$\rho=\frac{2}{1+|z|^2}.$ Equation \eqref{el} becomes
\begin{equation}\label{rimequ}u_{z\bar
z}-\frac{2\bar u}{1+|u|^2}u_z\cdot u_{\bar z}=0.\end{equation}

Notice this important example. The Gauss map of a surface $\Sigma$
in $\Bbb R^3$ sends a point on the surface to the corresponding unit
normal vector $\mathbf{n}\in \overline{\Bbb C} \cong S^2$. In terms
of a conformal coordinate $z$ on the surface, if the surface has
{\it constant mean curvature}, its Gauss map $\mathbf{n}: \Sigma
\mapsto \overline{\Bbb C}$, is a Riemann harmonic map \cite{rv}.
Since $$\int_{{\Bbb C}}\rho^2(w)dudv=4\pi,$$it follows that the
Riemann metric is allowable for every double connected domain
(bounded or unbounded).
\end{example}
\begin{example}
If $u:\Bbb U\mapsto \Bbb U$ is a harmonic mapping with respect to
the hyperbolic metric $\lambda=\dfrac{2}{1-|z|^2}$ then
Euler-Lagrange equation of $u$ is
\begin{equation}\label{eqpo}
u_{z\bar z}+\frac{2\bar u}{1-|u|^2}u_z\cdot u_{\bar z}=0.
\end{equation}
An important example of hyperbolic harmonic mapping is the Gauss map
of a space-like surfaces with constant mean curvature $H$ in the
Minkowski $3$-space $M^{2,1}$ (see \cite{ctr}). This metric is
allowable in compact bounded domains in $\mathbb U$ but for every
$r<1$, the integral $\int_{A(r,1)}\lambda^2(w)dudv$ diverges.
\end{example}
\subsection*{Acknowledgement} I thank Professor Leonid Kovalev for very useful discussion about the subject of this paper.
\bibliographystyle{amsplain}

\end{document}